\numberwithin{equation}{section}
\newtheorem{remark}{Remark}[section]
\newtheorem{lemma}{Lemma}[section]
\newtheorem{theorem}{Theorem}[section]
\newtheorem{corollary}{Corollary}[section]
\newtheorem{proposition}{Proposition}[section]
\DeclareTextFontCommand{\textmyfont}{\myfont}
\newcommand{\splitcell}[2][c]{%
  \begin{tabular}[c]{@{}c@{}}\strut#2\strut\end{tabular}%
}
\def\RR{\mathbb R}
\def\ve{\varepsilon}
\def\F{\mathcal{F}}
\newcommand{\err}{\textup{Err}}
\newcommand{\dashdownarrow}{\rotatebox{-90}{$\dashrightarrow$}}
\renewcommand{\hat}{\widehat}
\renewcommand{\bar}{\overline}
\newcommand{\gb}[1]{{\color{black}#1}}
\def\be{\begin{equation}}
\def\ee{\end{equation}}
\def\bea{\begin{eqnarray}}
\def\eea{\end{eqnarray}}
\def \d {\text{d}}
\def \var {\textup{var}}
\def \red {\textup{sel}}
\newtheorem{assumption}{Assumption}[section]
\title{Consensus based optimization with memory effects: \\ random selection and applications}
\author{Giacomo Borghi\footnote{RWTH Aachen University, Institute for Geometry and Applied Mathematics, Aachen, Germany (borghi@eddy.rwth-aachen.de)}
\and Sara Grassi\footnote{University of Ferrara, Department of Mathematics and Computer Science \& Center for Modelling Computing and Statistics, Ferrara, Italy (sara.grassi@unife.it, lorenzo.pareschi@unife.it)}
\and Lorenzo Pareschi\footnotemark[2]}
\begin{document}
\maketitle

\begin{abstract}
In this work we extend the class of Consensus-Based Optimization (CBO) metaheuristic methods by considering memory effects and a random selection strategy. The proposed algorithm iteratively updates a population of particles according to a consensus dynamics inspired by social interactions among individuals. The consensus point is computed taking into account the past positions of all particles. While sharing features with the popular Particle Swarm Optimization (PSO) method, the exploratory behavior is fundamentally different and allows better control over the convergence of the particle system. 
We discuss some implementation aspects which lead to an increased efficiency while preserving the success rate in the optimization process. In particular, we show how employing a random selection strategy to discard particles during the computation improves the overall performance. Several benchmark problems and applications to image segmentation and Neural Networks training are used to validate and test the proposed method.
A theoretical analysis allows to recover convergence guarantees under mild assumptions on the objective function. This is done by first approximating the particles evolution with a continuous-in-time dynamics, and then by taking the mean-field limit of such dynamics. Convergence to a global minimizer is finally proved at the mean-field level.

\end{abstract}

{\bf Keywords}: consensus-based optimization, stochastic particle methods, memory effects, random selection, machine learning, mean-field limit

\tableofcontents

\section{Introduction}

Meta-heuristic algorithms are recognized as trustworthy, easy to understand optimization methods which have been widely applied to several fields such as Machine Learning \cite{Li2022}, path planning  \cite{Liang2020} and image processing \cite{tuli2022}, to name a few. Starting from a set of possible solutions, a meta-heuristic algorithm typically updates such set iteratively by combining deterministic and stochastic choices, often inspired by natural phenomena. 
Exploration of the search space and exploitation of the current knowledge are the two fundamental mechanisms driving the algorithm iteration \cite{crepinsek2013explore}.
Examples of established meta-heuristic algorithms are given by Genetic Algorithm (GA) \cite{holland1992adaptation,tang1996genetic}, Simulated Annealing (SA) \cite{kirkpatrick1983sa}, Particle Swarm Optimization (PSO) \cite{kennedy1995particle} and Differential Evolution (DE) \cite{storn1997differential}. We refer to \cite{hussain2019review} for a complete literature review.

Consensus-Based Optimization (CBO) is a class of gradient-free meta-heuristic algorithms inspired by consensus dynamics among individuals. After its introduction \cite{pinnau2017consensus} it has gained popularity among the mathematical community due to its robust mathematical framework \cite{carrillo2018analytical,huang2021meanfield,ha2021timediscrete,fornasier2021consensusbased}. In CBO algorithms, a population of particles concentrates around a consensus point given by a weighted average of the particles position. In the computation of such consensus point, more importance is given to those particles attaining relatively low values of the objective function \gb{by means of the Gibbs distribution}. The exploration mechanism is introduced by randomly perturbing the particles positions at each iteration. Particles which are close to the consensus point are subject to small perturbations, while those that are far from it display a more exploratory behavior. 

In this work, following the recent analysis in \cite{grassi2021from}, we study a Consensus-Based Optimization algorithm with Memory Effects (CBO-ME) where the consensus point is computed among the whole history of the particles positions and not just \gb{among} the positions of the current iteration, as in the original CBO method. This is done by keeping track of the best position found so far by each particle, and \gb{by} computing the consensus point among these ``personal'' bests. While sharing common elements with PSO, such as \gb{the convergence mechanism} to a promising point and the presence of personal bests, CBO-ME differs in the way the exploration mechanism is implemented. Indeed, in CBO-ME, as in CBO algorithms, the stochastic behavior is given by adding Gaussian noise to the particles dynamics and can be tuned independently on the exploitation mechanisms, leading to a better control over the particles convergence.
Therefore, while in classical PSO methods it is the balance between local best and global best that governs the optimization strategy, in CBO methods it is the balance between exploration and exploitation mechanisms that determines the choice of parameters. We recall that a generalization of PSO methods that allows leveraging the same flexibility in searching the global minimum as in CBO algorithms has been recently presented in \cite{grassi2021from}.

Many real-life problems, especially those regarding Machine Learning, require to optimize a large number of parameters. Therefore, it essential to design fast algorithms to save computational time and memory. This is a major weakness of swarm-based methods, which require a set of particles to minimize the problem, unlike gradient-based methods that can work on a single particle trajectory. 
For methods based on a collection of particles, existing algorithms can be improved by discarding particles whenever the system has a prominent exploitative behavior. This is sometimes referred as ``natural selection strategy'' in the DE literature \cite{storn1997differential,changhe2007} and aims to discard the non-promising solutions. Inspired by particle simulations techniques where it is important to preserve the particles \gb{probability} distribution, we examine a ``random selection strategy'' where particles are discarded randomly based on the local consensus achieved.
We will discuss such implementation aspects by testing CBO-ME against high-dimensional learning problems and theoretically analyze the impact of the random selection strategy on the system. In particular, we prove that if the full particle system is expected to converge towards a solution to \gb{the minimization problem}, so will the reduce one, provided a sufficient number of particles remains active. 
Note that, such analysis can be generalized to other particle dynamics and may be of independent interest.  

Owing to the convergence analysis of CBO algorithms \cite{carrillo2018analytical,huang2021meanfield,fornasier2022aniso,fornasier2021consensusbased} and recent analysis of PSO \cite{grassi2021from,huang2022PSO} we are able to prove convergence of the algorithm under mild assumption on the objective function. This is done by first approximating the algorithm with a continuous-in-time dynamics and secondly by giving a probabilistic description to the particles system. By assuming propagation of chaos \cite{sznitman1991chaos}, particles are considered to behave independently according to the same law. This allows to reduce the possible large system of equations to a single partial differential equation: the so-called mean-field model. Such model is then analyzed to recover convergence guarantees under precise assumption on the objective function. Developed in the field of statistical physics, this approach has shown be fruitful in studying particle-based meta-heuristic algorithms \cite{fornasier2022aniso,fornasier2021consensusbased,huang2022PSO}. 
We note that convergence in mean-field law was recently proved in \cite{riedl2022leveraging} in an independent work. 

The rest of the paper is organized as follows. Section \ref{sec:2} is devoted to the introduction of the CBO-ME algorithm with random selection and comparison with CBO methods without memory effects as well as PSO.  In Section \ref{sec:numerics}, 
validate the proposed method against several benchmark problems and two Machine Learning tasks. 
Theoretical convergence guarantees and analysis of the random selection strategy are summarized in Section \ref{sec:convergence}.
Some final remarks are given in Section \ref{sec:conclusions}. Technical details of the theoretical analysis are given in Appendix \ref{app:convergence}.

\section{Consensus-based optimization with memory effects}
\label{sec:2}

In this section, we present the Consensus-Based Optimization algorithm with Memory Effects (CBO-ME)
to solve problems of the form
\be
x^* \in \underset{x\in \RR^\d}{\textup{argmin}}\, \F(x) \,,
\label{eq:pb}
\ee
where $\RR^\d,\, \d \in \mathbb{N}$ is the, possibly large, search domain for the continuous function $\F \in C(\RR^\d,\RR)$. We will do so by \gb{also} highlighting similarities and differences between classical CBO methods and PSO algorithms.

\subsection{Particles update rule}
\label{sec:updaterule}
At each iteration step $k$ and for every particle $i=1, \dots, N$, we store its position $x^k_i$ and its best position found so far $y_i^k, \gb{\F(y_i^k) = \min_{h \leq k} \F(x_i^h)}$. The best positions are used to compute a consensus point 
\be
\bar y^{\alpha,k} = \sum_{i=1}^N \,\omega_i^k\, y^k_i\, \quad \textup{with}\quad \omega_i^k = \frac{e^{-\alpha \F(y^k_i)}}{\sum_{j=1}^N e^{-\alpha \F(y^k_j)}}
\label{eq:yalpha}
\ee
which approximate\gb{s} the global best solution \gb{$\bar y^{\infty,k}$} among all particles and all times for $\alpha>1$. Indeed, thanks to the choice of the weights $\omega_i^k$, we have that 
\be \notag
\bar y^{\alpha,k} \quad \longrightarrow \quad \bar y^{\infty,k} := \textup{argmin}\{\F(y_1^k), \dots,\F(y_N^k) \}
\ee
as $\alpha \to \infty$, provided that there is only one global best position among $\{y_1^k, \dots, y_N^k\}$. Such approximation was first introduced for CBO methods \cite{pinnau2017consensus} as it leads to more amenable theoretical analysis, but it also allows for  more flexibility. Indeed, relatively small values of $\alpha$ \gb{can be} 
 used at the beginning of the computation to promote exploration. Large values of $\alpha$, on the other hand, lead to better exploitation of the computed solutions and to higher accuracy. We note that the weights used in \eqref{eq:yalpha} correspond in statistical mechanics to the Boltzmann-Gibbs distribution associated with the energy $\F$. In this context, $\alpha$ plays the role of the inverse of the system temperature $T$ and the limit $\alpha \to \infty$ corresponds to $T \to 0$.

Once the consensus point $\bar y^{\alpha,k}$ is computed, the particle positions are then updated according to the law
\be
x^{k+1}_i =  x_i^{k} +  \lambda \left( \bar y^{\alpha,k} - x_i^k \right) + \sigma \left( \bar y^{\alpha,k} - x_i^k\right) \otimes \theta^k_{i}
\label{eq:updateCBOME}
\ee
with $\theta^k_{i} \in \RR^d$ randomly sampled from the normal distribution ($\theta^k_i \sim \mathcal{N}(0, \mathbf{I}_\d)$) and where $\otimes$ is the component-wise product.

The update rule is characterized by a deterministic component of strength $\lambda \in (0,1)$ promoting concentration around the consensus point $\bar y^{\alpha,k}$ and a stochastic component of strength $\sigma> 0$ promoting exploration of the search space. As the latter depends on the difference $(\bar y^{\alpha,k} - x_i^k)$, the random behavior is stronger for particles which are far from the consensus point, whereas it is weaker for those that are close to it. 
Also, such exploration resemble\gb{s} an anisotropic diffusive behavior 
\gb{in which every coordinate direction is explored} 
at a different rate. This approach was first proposed in \cite{carrillo2019consensus} in the context of CBO methods and has been proved to suffer less from the curse of dimensionality with the respect to the originally proposed isotropic diffusion given by $\sigma \|\bar y^{\alpha,k} - x_i^k\|_2 \theta_i^k$ with $\theta_i^k$ being again a normally distributed $\d$-dimensional vector \cite{carrillo2019consensus}.

\subsection{Random selection strategy}
\label{sec:selection}
When the particle system concentrates around the consensus point, showing a mostly exploitative behavior, we employ a particle selection strategy. Discarding particles introduces additional stochasticity to the system, while reducing the computational cost. Following the approach suggested in \cite{fhps20-2}, we check the evolution of the system variance to decide how many particles to (eventually) discard. 

For a given set of particles $\mathbf{z} = \{z_i\}_{i \in J}$, the system variance is given by
\be
\var (\mathbf{z}) := \frac 1 {|J|} \sum_{j \in J} \left \| z_j - \textup{m}(\mathbf{z}) \right\|_2^2\, \quad \textup{with}\quad \textup{m}(\textbf{z}):= \frac1{|J|}\sum_{i\in J} z_i \,,
\label{eq:variance}
\ee
where $|J|$ indicates the cardinality of $J$, that is, the number of particles in this context. 

Let $I_k \subseteq  \{1, \dots, N\}$ be the set of active particles at step $k$ and $N_k = |I_k|$. To decide how many particles to select, we compare the variance of the particle system before the position update \eqref{eq:updateCBOME}, $\mathbf{x}^k = \gb{\{x_i^{k} \}_{i\in I_k}}$ and after it, $\tilde{\mathbf{x}}^{k+1} = \{x_{i}^{k+1} \}_{i \in I_k}$. Then, the number $N_{k+1}$ of particles we select for the next iteration is given by 
\be
\begin{split}
\tilde N_{k+1} &= \left \lfloor  N_k \left ( 1+ \mu\, \frac{ \var(\tilde{\mathbf{x}}^{k+1})-\var(\mathbf{x}^{k+1})}{\var(\mathbf{x}^{k+1})}  \right) \right \rfloor \\
 N_{k+1} &=  \min \Big\{ \max\big\{\tilde N_{k+1}, N_{\min}\big \}, N_k \Big \}
\end{split}
\label{eq:reduction}
\ee
%
\gb{with} $\lfloor z \rfloor$ being the integer part of a number $z$ and $N_{\min}\in \mathbb{N}$ the smallest amount of particles we allow to have. \gb{If $N_{k+1} < N_k$}, a subset $I_{k+1} \subset I_k, |I_{k+1}| = N_{k+1}$, of particles is randomly selected to continue the computation.
The parameter $\mu\in [0,1]$ regulates the mechanism: for $\mu=0$ there is no particle discarding, while for $\mu=1$ the maximum number of particles is discarded if the variance is decreasing. As we will see in Section \ref{sec:numerics}, this random selection mechanism 
reduces the computational time without affecting the algorithm performance. We will also theoretically analyze this aspect in Section \ref{s:reductionanalysis}, where we show that convergence properties are preserved.

As stopping criterion, we keep a counter $n$ on how many times $\|\bar y^{\alpha,k+1}-\bar y^{\alpha,k} \|_2$ is smaller than a certain tolerance $\delta_{\textup{stall}}>0.$ If this happens for more than a given $n_{\textup{stall}}$ number of times in a row, we assume the particle system has found a solution and stop the computation. A maximum number of iteration $k_{\max}$ representing the computational budget is also given. The proposed CBO-ME is summarized in Algorithm \ref{alg}.

\begin{remark} 
In the meta-heuristic literature, particles are usually discarded depending on their objective value, in a way that particles with high \gb{objective value} are more likely to be discarded \cite{storn1997differential,changhe2007}. 
The proposed strategy does not add a further heuristic strategy but simply cut down the algorithm complexity.  Also, convergence properties are in this way expected to be preserved. We note that, on the other hand, there is no straightforward way to \gb{both} generate particles and preserve the particle system distribution \gb{at the same time}.
\end{remark}

\begin{algorithm}[h!]
    \SetKwFunction{isOddNumber}{isOddNumber}
    \SetKwInOut{KwIn}{Input}
    \KwIn{$\F, N_0,\gb{N_{\min}},k_{\max},\lambda,\sigma,\alpha, n_{\textup{stall}}$ and $\delta_{\textup{stall}}$\; }
	Inizialize $N_0$ particle positions $x_0^i, i=1, \dots, N$\;
	$y_i^0 \gets x_i^0$ for all $i=1, \dots, \gb{N_0}$\;
	Compute $\bar y^{\alpha,0}$ according to \eqref{eq:yalpha}\;
 	$k \leftarrow 0$, $n \leftarrow 0$\; 
	\While{$k < k_{\max} $ \textup{and}  $ n < n_{\textup{stall}}$}{
		\For{$i = 1$ \KwTo $N_k$}{
			$\theta^k_i \sim \mathcal{N}(0,\mathbf{I}_{\d})$\;
			Compute $x_i^{k+1}$ according to \eqref{eq:updateCBOME}\;
			\uIf{$\F(x_i^{k+1}) < \F(y_i^k)$}
		{$y_i^{k+1} \gets x_i^{k+1}$\;}
				\Else{ $y_i^{k+1} \gets y_i^{k}$\;}
		}
		Compute  $\bar y^{\alpha,k+1}$ according to \eqref{eq:yalpha}\;
		\uIf{$ \|\bar y^{\alpha,k+1} - \bar y^{\alpha,k}\|_2 < \delta_{\textup{stall}}$}
		{$n \leftarrow n+1$\;}
		\Else{ $ n \leftarrow 0$\;}
	       Compute $N_{k+1}$ according to \eqref{eq:reduction}\;
		\uIf{$N_{k+1}<N_k$}
		{Randomly discard $N_{k+1}-N_k$ particles\;}
		$k \leftarrow k+1 $\;
}
    \KwRet{$\bar y^{\alpha,k},\F(\bar y^{\alpha,k})$}
    \caption{Consensus-Based Optimization with Memory Effects (CBO-ME) }
    \label{alg}
\end{algorithm}

\subsection{Comparison with CBO and PSO}

What distinguishes CBO-ME from plain CBO, see e.g \cite{pinnau2017consensus,carrillo2019consensus}, is clearly the introduction of the best positions $\{y^k_i\}_{i=1}^N$ and the fact that the consensus point is calculated among them and not just among the particle positions $\{x^k_i\}_{i=1}^N$ at that given time $k$. Indeed, the classical CBO update rule without memory effects (and with anisotropic diffusion and projection step) is given by 
\be
x^{k+1}_i =  x_i^{k} + \lambda \left( \bar x^{\alpha,k} - x_i^k \right) + \sigma \left( \bar x^{\alpha,k} - x_i^k\right) \otimes \theta^k_i 
\label{eq:updateCBO}
\ee
where $\bar x^{\alpha,k}$ is defined consistently with \eqref{eq:yalpha} (by substituting $y_i^k$ with $x^k_i$).  As we will see in the numerical tests, the use of memory effects improves the algorithm performance. 

Since alignment towards personal bests $y^k_i$ and towards the global best $\bar y^{\infty,k}$ are also the fundamental building blocks of PSO algorithms, we highlight now the main differences and similarities between PSO and CBO-ME. For completeness, we recall the canonical PSO method, see e.g. \cite{poli2007particle}, using the notation of \eqref{eq:updateCBOME} for easier comparison
\be
\begin{cases}
x^{k+1}_i &=  x^{k}_i + v_i^{k+1} \\
v^{k+1}_i &= w v_i^{k} + C_1\left(y^k_i - x_i^k \right) \otimes\hat \theta_{i,1}^k + C_2\left(\bar y^{\infty,k} - x_i^k\right)\otimes\hat \theta_{i,2}^k
\label{eq:updatePSO}
\end{cases}
\ee
where $v^k_i$ are the particles velocities, $w,C_1,C_2>0$ are the algorithm parameters and $ \theta_{i,1}^k, \theta_{i,2}^k$ are uniformly sampled from $[0,1]^\d$, ($\hat \theta_{i,1}^k, \hat \theta_{i,2}^k) \sim \textup{Unif}([0,1]^\d)$. Several variants and improvements have been proposed starting from the above dynamics, but a complete review is beyond the scope of this paper and we refer to the recent survey \cite{wang2018particle} for more references. 

We are interested in highlighting the main differences between \eqref{eq:updateCBOME} and \eqref{eq:updatePSO} regarding the stochastic components: in CBO-ME deterministic and stochastic steps are de-coupled and tuned by two different parameters ($\lambda$ and $\sigma$), while in PSO they are coupled.
Indeed, in \eqref{eq:updatePSO}, deterministic and stochastic components are both controlled by the same parameter: $C_1$ in the case of personal best dynamics and $C_2$ for the global best one. By splitting the term $C_2\left(\bar y^{\infty,k} - x_i^k\right)\hat \theta_{i,2}^k$ into a deterministic step and a zero-mean term we obtain
\be
C_2\left(\bar y^{\infty,k} - x_i^k\right)\otimes \hat\theta_{i,2}^k = \frac{C_2}2 \left(\bar y^{\infty,k} - x_i^k\right) + \frac{C_2}2\left(\bar y^{\infty,k} - x_i^k\right)\otimes \theta_{i,2}^k
\ee
with $\theta^k_{i,2} = 2\hat \theta_{i,2}^k -1,\,\theta_{i,2}^k \sim \textup{Unif}([-1,1]^\d)$. Suggested in \cite{grassi2021from}, such rewriting highlights how increasing the alignment strength towards the global best (by increasing $C_2$) necessary increases the stochasticity of the system as well. In \eqref{eq:updateCBOME} and \eqref{eq:updateCBO}, on the other hand, one is allowed to tune the exploration and exploitation behaviors separately, by either changing \gb{parameter} $\lambda$ or $\sigma$. 


Clearly, CBO-ME also differs from PSO due to its first-order dynamics. Having the aim of resembling birds flocking, the first PSO algorithm \cite{kennedy1995particle} was proposed as a second-order dynamics. 
The inertia weight $w$, introduced later in \cite{shi1998modified}, became an essential parameter to prevent early convergence of the swarm and to increase the global exploration behavior, especially at the beginning of the computation, see e.g.\cite{shi1998modified,nick2011novel} and reviews \cite{poli2007particle,wang2018particle,pso2021review} for more references. 
We note that several other strategies have proposed to improve PSO exploration behavior, see, for example, \cite{zhang2023particle}.
As already mentioned, in CBO methods convergence and exploration are de-coupled and can be tuned separately. Therefore, to keep the algorithm more amenable to theoretical analysis, we consider a simpler first-order dynamics. We note that a CBO dynamics with inertia mechanism was proposed in \cite{jin2022adaptive}.

Similarly, we found the contribution given by the personal best alignment non-essential and difficult to tune. Thus, the lack of alignment towards personal best in \eqref{eq:updateCBOME}. Replacing alignment towards personal best with \gb{Gaussian} noise was also suggested in \cite{yang2020nature} where authors proposed the Accelerated PSO (APSO) algorithm. Further studied in \cite{yang2011acc,gandomi2013}, APSO also allows to de-couple the stochastic component from the deterministic one and the noise is heuristically tuned to decrease during the computation as in Simulated Annealing \cite{kirkpatrick1983sa}. In CBO methods, the noise strength automatically adapts as it depends on the distance from the consensus point, which is also different for every particle. 
For completeness, we note that many other variants of PSO have been proposed to include different explorative behaviors, see e.g. Chaotic PSO \cite{liu2005chaospso}.


%
%

\section{Numerical results}
\label{sec:numerics}
Having discussed the fundamental features of the CBO dynamics with memory effects, we now validate Algorithm \ref{alg} and compare its performance with plain CBO and \gb{PSO}. We will test the methods against several benchmark optimization problems and  analyze the impact of the random selection technique on the convergence speed.
We also employ \gb{Algorithm} \ref{alg} to solve problems arising from applications, such as image segmentation and training of \gb{Machine Learning} architectures for function approximation and image \gb{classification}.

\subsection{Tests on benchmark problems}
\label{sec:benchmark}

We test the proposed algorithm against different optimization problems, by considering 8 benchmark objective functions, see e.g. \cite{JamilY13}, which we report in Table \ref{tab:benchmarks} for completeness. The search space dimension is set to $\d = 20$ and the location of the global best $x^*$ is known.

\begin{table}
\begin{center}
  \centering\scriptsize{
  \begin{tabular}{  | c | c | c | c | c |  }
    \hline 
    \textbf{Name} & \textbf{Objective function $\mathcal{F}(x)$}& \textbf{Search space} & \textbf{$x^\ast$} & \textbf{$\mathcal{F}(x^\ast)$} \\
    \hline \hline \rule{0pt}{16pt}
    Ackley & $-20\ \mbox{exp}\left( -0.2\sqrt{\frac{1}{\d}\sum_{i=1}^{\d}{(x_i)^2}}\right)-\mbox{exp}\left(\frac{1}{\d}\sum_{i=1}^\d{ \cos \left( 2\pi (x_i)\right) }\right) +20 +e$  & $[-32,32]^d$ & $ (0,\dots,0)$ & 0 
    \\ \hline  \rule{0pt}{16pt}
     Griewank & $1+\sum_{i=1}^{\d} \frac{(x_i)^2}{4000}-\prod_{i=1}^{\d} \mbox{cos}\left(\frac{x_i}{i}\right)$   & $[-600,600]^\d$ & $ (0,\dots,0)$ & 0 
    \\ \hline  \rule{0pt}{16pt}
        Rastrigin & $10\d + \sum_{i=1}^\d \left[(x_i)^2 - 10 \cos\left(2\pi (x_i) \right)\right]$  & $[-5.12,5.12]^\d$ & $ (0,\dots,0)$ & 0 
    \\ \hline  \rule{0pt}{16pt}
       Rosenbrock & $1-\cos{\left( 2\pi \sqrt{\sum_{i=1}^{\d}{(x_i)^2}}\right)} + 0.1 \sqrt{\sum_{i=1}^{\d}{(x_i)^2}} $  & $[-5,10]^\d$ & $ (1,\dots,1)$ & 0
    \\ \hline \rule{0pt}{16pt}
       Salomon & $1-\cos{\left( 2\pi \sqrt{\sum_{i=1}^{\d}{(x_i)^2}}\right)} + 0.1 \sqrt{\sum_{i=1}^{\d}{(x_i)^2}}  $  & $[-100,100]^\d$ & $ (0,\dots,0)$ & 0 
    \\ \hline \rule{0pt}{16pt}
     Schwefel 2.20 & $\sum_{i = 1}^\d \vert x_i \vert  $  & $[-100,100]^\d$ & $ (0,\dots,0)$ & 0 
    \\ \hline \rule{0pt}{16pt}
     XSY random & $\sum_{i=1}^{\d} \eta_i \vert x_i \vert^{i}, \qquad \eta_i \sim \textup{Unif}([0,1])$  & $[-5,5]^\d$ & $ (0,\dots,0)$ & 0
    \\ \hline \rule{0pt}{16pt}
     XSY 4 & $\left( \sum_{i=1}^{\d} \sin^2(x_i) - e^{ \ -\sum_{i=1}^{\d} (x_i)^2} \right) \, e^{ \ -\sum_{i=1}^{\d} \sin^2{\sqrt{\vert x_i \vert}}}   $  & $[-10,10]^\d$ & $ (0,\dots,0)$ & $-1$
    \\ \hline
  \end{tabular}}
 \end{center}
 \caption{Considered benchmark test functions for global optimization. For each function, the corresponding search space and global solution is given.}
 \label{tab:benchmarks}
\end{table}

As in plain CBO methods, we expect the most important parameters \gb{to be $\lambda$ and $\sigma$}, governing the balance between the exploitative behavior and the explorative one. In particular, we are interested in the algorithm performance as we change the ratio between $\lambda$ and $\sigma$. Therefore, in the first experiment we fix $\lambda = 0.01$, while considering different values of $\sigma$. The parameter $\alpha$ is adapted during the computation: starting form $\alpha_0 = 10$, it increases according to the law
\begin{align}
\label{alpha_adaptive}
\alpha = \alpha_0 \cdot k \cdot\log_2(k)\,.
\end{align}

Fig. \ref{fig:sigma_evaluation} shows the accuracy and the objective value reached for $\sigma \in [0,2]$ after $k_{\max} = 10^4$ algorithm iterations with $N = 200$ particles, \gb{and without} random selection. The optimal value for $\sigma$ is clearly problem-dependent, but we note that the optimal values for the problems considered all fall within a relative small range (underlined in gray in Fig. \ref{fig:sigma_evaluation}). 

\begin{figure}
\centering
\hfill
\begin{subfigure}{0.45\linewidth}
\includegraphics[width = 1\linewidth]{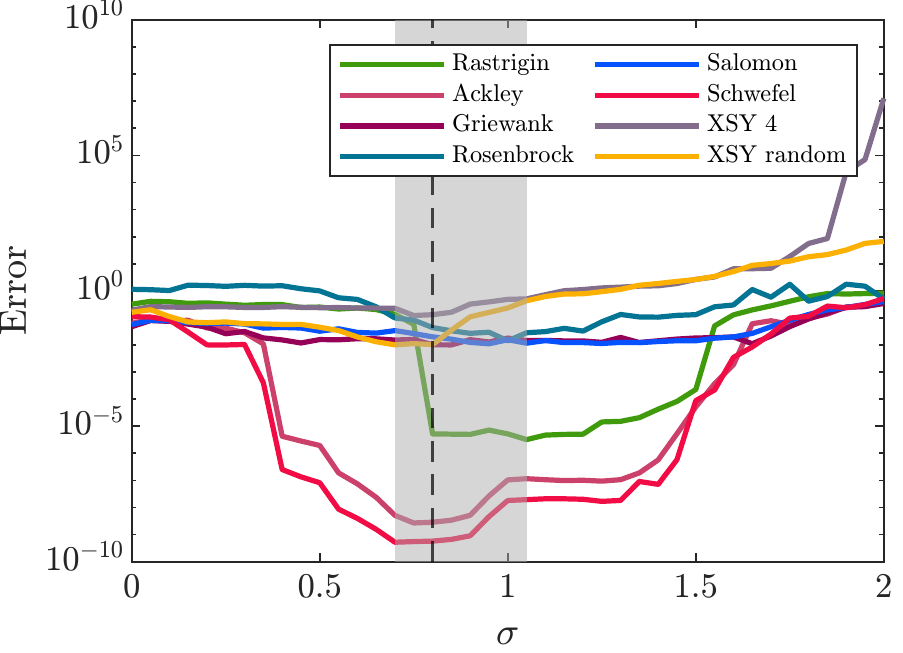}
\caption{$\|\bar y^{\alpha,k} -x^{\ast} \|_\infty $}
\end{subfigure}
\hfill
\begin{subfigure}{0.45\linewidth}
\includegraphics[width = 1\linewidth]{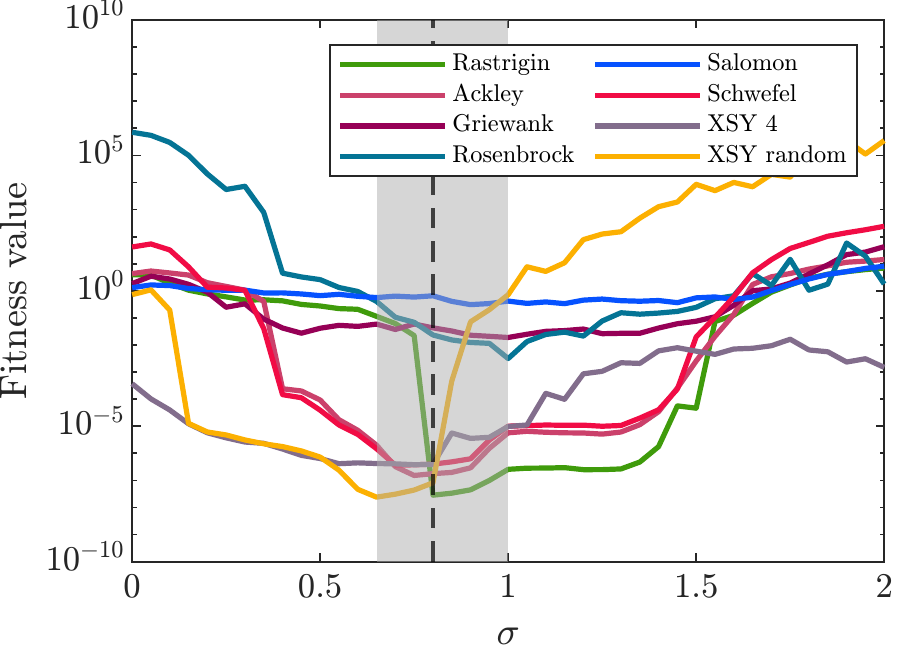}
\caption{$\F(\bar y^{\alpha,k})$}
\end{subfigure}
\hfill
\caption{Optimization on benchmark functions using CBO-ME. Behavior of the expectation error and fitness
value for different values of $\sigma$. Here $\lambda = 0.01$
and $\alpha$ is adaptive, with $\alpha_0 = 10$. The particle population is $N = 200$. Gray bands (of values $\left[0.70, 1.05 \right]$ for error and $\left[0.65, 1 \right]$ for fitness) show the range in which the minima of the different benchmark functions fall. The dotted line
marks the visually estimated pseudo-optimal value $\sigma = 0.8$.
\gb{Results are averaged on $250$ runs and obtained with  $k_{\max} = 10^4$ iterations,  without stopping criterion.}}
\label{fig:sigma_evaluation}
\end{figure}

%
%

From Fig. \ref{fig:sigma_evaluation} we infer that a good value for all benchmark problems considered is given by $\sigma =0.8$.
Using this value, we now compare CBO-ME, with plain CBO and the standard PSO (with and without alignment towards personal best) for different population sizes $N=50,100,200$. We keep the random selection mechanism off by setting $\mu=0$ and use the same previously chosen parameters when memory effects are used. 
For plain CBO, without memory effects, we set $\gb{\sigma =\sqrt{2}/2 \approx 0.71}$ \gb{and same $\lambda$}. Concerning PSO, we use the solver provided by the MATLAB  Global Optimisation Toolbox (\textmyfont{particleswarm}), changing the maximum number of iterations and the stall condition to the one used for CBO methods, to make the results comparable. The remaining parameters are kept as described in the relative documentation \cite{pedersen2010good}.
We set $k_\textup{max} = 10^4$, $\delta_{\textup{stall}} = 10^{-4}$ and consider a run successful when either
\be 
\|\bar y^{\alpha,k}-x^{\ast}\|_{\infty}< 0.1\quad \textup{or} \quad |\F(\bar y^{\alpha,k})-\F(x^{\ast})|< 0.01\,.
\label{eq:success}
\ee

Table \ref{tab:experiment1} reports success rate, final error given by $\| \bar y^{\alpha, k} - x^*\|_\infty$, \gb{objective function} value  \gb{$\F(\bar y^{\alpha,k})$} and total number of iterations, averaged over 250 runs. 
In addition to the classic PSO method, where the acceleration coefficients are chosen to be equal $C_1 =C_2 = 1.49$, Table \ref{tab:experiment1} also shows the results when only the alignment towards global best is considered in PSO ($C_1=0)$. 

While CBO already manages to find the global minimizer in most of the problems considered, we note that it \gb{sometimes} fails when Rastrigin, Rosenbrock or XSY random functions are optimized. \gb{CBO-ME outperforms CBO in these objective functions}. 
Standard PSO in many cases fails to solve the problem, see e.g.
 Rastrigin, Salomon or XSY 4 functions. PSO success rate is also lower among all problems, with the exception of the Schwefel 2.20 benchmark problem. 
Considering only global adjustment seems to show advantages 
except in the case of Ackley where setting $C_1=0$ decreases the success rate or, in the case of XSY 4, Salomon or Rastrigin, where convergence is not achieved even for $C_1 = 0$.
Consensus methods, however, perform better in terms of both success rate and speed up. In addition, for most problems, the population size $N$ seems not to play a significant role in the algorithms performance. This further motivates the introduction of the random selection strategy described in the Section \ref{sec:updaterule} in order to save computational costs.

\begin{table}
\begin{center}\resizebox{6.3in}{2.2in}{
\scriptsize{
\begin{tabular}{|l|lccc|ccc|ccc|ccc|}
\hline
 & \multicolumn{4}{c}{ \hspace{5pt} CBO ($\sigma = \sqrt{2}/2$)} & \multicolumn{3}{|c|}{ \hspace{5pt} CBO-ME ($\sigma = 0.8$)\hspace{5pt} } & \multicolumn{3}{|c|}{ \hspace{5pt} PSO \hspace{5pt} }& \multicolumn{3}{|c|}{ \hspace{5pt} PSO ($C_1 = 0$) \hspace{5pt} } \\
\hline
& & $N=50$ & $N=100$ & $N=200$ & $N=50$ & $N=100$ & $N=200$ & $N=50$ & $N=100$ & $N=200$ & $N=50$ & $N=100$ & $N=200$ \\  
\hline
{\rm \textbf{Ackley}}& Rate  &99.8\% & 100.0\% & 100.0\% & 100.0\% & 100.0\% & 100.0\% &17.1\%  & 41.2\% &  54.3\% &  4.2\% & 16.2\% & 40.1\% \\
& Error &  4.22e-06 & 2.14e-06 & 3.55e-06           & 2.42e-06 & 1.89e-06 & 1.56e-06 & 6.17e-09 &   8.86e-11 &  2.01e-12 & 2.23e-08 & 1.80e-10 & 8.65e-13\\
& $\mathcal{F}$ &1.18e-04 & 5.81e-05 & 7.30e-05 &1.54e-04 & 4.96e-05 & 4.99e-05 &   6.24e-09 &  7.65e-11 & 1.94e-12 & 2.06e-08 & 1.70e-10 & 8.01e-13\\
& Iterations &912.3 & 718.1 & 623.2			&977.2 & 703.3 & 622.2 & 501.8 & 424.2 & 341.3 & 502.3 & 421.2 & 321.2\\
\hline
{\rm \textbf{Griewank}}& Rate  &100.0\% & 100.0\% & 100.0\% 	&100.0\% & 100.0\% & 100.0\% &46.0\% &  48.6\%  &  55.3\% & 50.0\% & 58.7\% & 78.0\% \\
 & Error & 2.20e-02 & 2.21e-02 & 2.24e-02			      &  2.13e-02 & 2.16e-02 & 2.25e-02 &  7.34e-02 & 1.56e-02 & 9.45e-03 & 1.17e-01 & 1.10e-01 & 8.96e-02\\
& $\mathcal{F}$ &5.26e-02 & 5.31e-02 & 5.47e-02 	      &4.95e-02 & 5.15e-02 & 5.82e-02 &3.23e-03 & 4.11e-03 & 3.78-03 & 3.73e-03 & 3.71e-03 & 2.90e-03  \\
& Iterations &922.1 & 723.3 & 633.2				 &911.2 & 778.3 & 635.4  &512.2 & 403.1 &  399.2 & 432.1 & 391.2 & 311.2\\
\hline
{\rm \textbf{Rastrigin}}& Rate & 12.1\% & 34.3\% & 62.7\% &23.2\% & 69.7\% & 89.1\% &0.0\% & 0.0\% & 0.0\%  &0.0\% & 0.0\% & 0.0\%\\
& Error & 1.28e-04 & 1.83e-04 & 2.34e-04 &  9.73e-05 & 1.27e-04 & 1.76e-04 &  - & - & - &  - & - & -\\
& $\mathcal{F}$ & 4.51e-06 & 9.03e-06 & 1.46e-05       &2.54e-06 & 4.31e-06 & 8.28e-06 & - & - & - &  - & - & -\\
& Iterations &1083.0 & 933.7 & 819.8 &  1007.6 & 922.5 & 769.9  &10000.0 & 10000.0 &10000.0 & 10000.0 & 10000.0 &10000.0\\
\hline
{\rm \textbf{Rosenbrock}}& Rate  & 65.3\% & 86.7\% & 100.0\% 	&70.1\% & 94.2\% & 100.0\%  &9.3\% & 22.6\% & 36.6\% & 46.7\% & 60.7\% & 76.7\%\\
& Error &  1.84e-02 & 2.43e-02 & 1.42e-02 				& 3.60e-02 & 4.01e-02 & 1.82e-02 & 6.19e-04 & 2.56e-04 & 1.67e-04 & 4.44e-02 & 4.45e-02 & 4.46e-02\\
& $\mathcal{F}$ & 6.13e-03 & 7.57e-03 & 2.40e-03 		&1.26e-02 & 1.42e-02 & 2.65e-03 & 3.80e-02 & 3.76e-02 & 2.56e-02 & 2.56e-03 & 8.95e-04 & 3.71e-04\\
& Iterations&5773.2 & 5423.2 & 5233.1					&5933.2 & 4956.2 & 4155.2 &4822.2 & 3823.2 & 3026.3 & 5924.2 & 3834.1 & 2933.3\\
\hline
{\rm \textbf{Schwefel 2.20}}& Rate  & 100.0\% & 100.0\%       & 100.0\% &100.0\% & 100.0\% & 100.0\% &100.0\% & 100.0\% & 100.0\% & 100.0\% & 100.0\% & 100.0\% \\
& Error &  5.79e-06 & 8.23e-07 & 2.44e-07 				 &  8.42e-06 & 1.03e-06 & 2.76e-07 &  8.34e-10 & 1.97e-12 &  4.58e-14 & 1.68e-07 & 3.41e-10 & 8.03e-14\\
& $\mathcal{F}$ &1.04e-03 & 2.15e-04 & 8.36e-05 		&1.50e-03 & 3.12e-04 & 9.37e-05 & 1.94e-09 & 6.36e-12 & 1.52e-13 & 2.44e-07 & 6.48e-10 & 2.46e-13 \\
& Iterations &822.2 & 682.2 & 622.1 					&655.2 & 544.2 & 455.2 & 491.2 & 434.2 & 399.1 & 578.2 & 467.2 & 423.2\\
\hline
{\rm \textbf{Salomon}}& Rate  & 100.0\% & 100.0\% & 100.0\%		 &100.0\% & 100.0\% & 100.0\%  &0.0\% & 0.0\% & 0.0\% &0.0\% & 0.0\% & 0.0\%\\
& Error &3.12e-02 & 2.14e-02 & 1.87e-02 					& 5.28e-02 & 4.49e-02 & 3.91e-02 &  - & - & - &  - & - & -\\
& $\mathcal{F}$ &3.14e-01 & 2.15e-01 & 1.88e-01			&2.44e-01 & 1.86e-01 & 1.91e-01  & - & - & - &  - & - & - \\
& Iterations &10000.0 & 10000.0 & 10000.0 					&8872.2 & 9021.2 & 5356.5 & 10000.0 & 10000.0 & 10000.0 & 10000.0 & 10000.0 & 10000.0\\
\hline
{\rm \textbf{XSY random \ }}& Rate & 52.3\% & 81.7\% & 92.6\% &100.0\% & 100.0\% & 100.0\% &3.2\% & 17.1\% & 31.2\% &100.0\% & 100.0\% & 100.0\%\\
& Error &   2.64e-02 & 1.62e-02 & 9.80e-03				  & 3.06e-02 & 1.86e-02 & 1.15e-02 & 2.25e-01 & 9.56e-02 & 8.42e-02  & 6.23e-02 & 5.12e-02 & 2.34e-02\\
& $\mathcal{F}$ &6.95e-08 & 3.54e-08 & 2.13e-08		 & 2.21e-06 & 4.85e-08 & 3.17e-08 & 3.35e-04 & 2.28e-04 & 1.34e-04 & 8.22e-04 & 4.11e-04 & 3.45e-04\\
& Iterations &10000.0 & 10000.0 & 10000.0                              &10000.0 & 10000.0 & 10000.0 &10000.0 & 10000.0 & 10000.0 &10000.0 & 10000.0 & 10000.0\\
\hline
{\rm \textbf{XSY 4 \ }}& Rate &  27.2\% & 89.3\% &100.0\% 	 	 & 25.2\% & 91.2\% & 100.0\% & 0.0\% & 0.0\% & 0.0\% & 0.0\% & 0.0\% & 0.0\% \\
& Error & 8.10e-01 & 7.12e-01 & 7.89e-01 					& 8.01e-01 & 7.55e-01 & 6.17e-01 & - & - & - & - & - & -\\
& $\mathcal{F}$ &4.79e-07 & 3.78e-07 & 3.46e-07 			& 1.58e-06 & 8.56e-07 & 5.43e-07 & - & - & - & - & - & -\\
& Iterations &10000.0 & 10000.0 & 10000.0					&9733.2 & 9531.1 & 8733.2 &10000.0 & 10000.0 & 10000.0 &10000.0 & 10000.0 & 10000.0\\ 
\hline
\end{tabular}
}}
\end{center}
\caption{Comparison between classical CBO, CBO-ME and standard PSO with and without alignment towards personal best on benchmark problems. The solver \textmyfont{particleswarm} available in the MATLAB  Global Optimisation Toolbox was used for the results concerning the PSO method. Optimal choice of parameters, different for each method, are used for the CBO algorithms. Same stopping criterion and definition of success, see $\eqref{eq:success}$, were used. 
 Performance metric considered: success rate (see \eqref{eq:success}), error $\|\bar y^{\alpha, k}-x^*\|_\infty$, fitness value $\F(\bar y^{\alpha,k})$ and number of iterations. Results are averaged over 250 runs.
}
\label{tab:experiment1}
\end{table}

In the third experiment, we test the proposed random selection mechanism \eqref{eq:reduction} for different values of the parameter $\mu$. We recall that with $\mu=0$ we have no particles removal, while as $\mu$ increases, more particles are likely to be discarded when the system variance decreases. The initial population is set to $N_0 = 200$, while the minimum number of particles to $N_{\min} =10$. Results are reported in Tables \ref{tab:reduction1} and \ref{tab:reduction2} in terms of: success rate, error, objective value, weighted number of iterations, given by 
\be
w_{\textup{iter}} =  \sum_{k=0}^{k_{\textup{end}}} \frac{N_k}{N_0}\,,
\label{eq:witer}
\ee
and percentage of Computational Time Saved ($CTS$). Results show that relative large values of $\mu$ allow to reach fast convergence without affecting the algorithm performance.
In our experiments, the Rastrigin problem allows for larger values of $\mu$, while the Rosenbrock one seems to be more sensitive to the selection mechanism with respect to the other objectives. This justifies the different values of $\mu$ considered in Tables \ref{tab:reduction1} and \ref{tab:reduction2}.
 In both cases, a suitable value of $\mu$ reduces the computational time with almost no impact in terms of accuracy.

\begin{table}
\begin{center}
\scriptsize{
\begin{tabular}{|l|lc|c|c|c|}
\hline
& & $\mu = 0$ & $\mu = 0.05$ & $\mu = 0.1$ & $\mu = 0.2$ \\  
\hline
{\rm \textbf{Ackley}}& Rate & 100.0\% & 100.0\% & 100.0\%\ & 100.0\%\\
& Error & 1.89e-06 & 2.78e-06 & 6.12e-06 & 2.29e-05\\
& $\F$  & 9.21e-05 & 5.77e-05 & 2.12e-04 & 5.12e-04\\
& $w_{\textup{iter}}$ & 688.2 & 502.3 & 387.2 & 178.2\\
& $CTS$ & - &  31.3\% & 52.1 \% & 70.2\% \\
\hline
{\rm \textbf{Griewank}}& Rate & 100.0\% & 100.0\% & 100.0\%\ & 100.0\%\\
& Error & 2.12e-02 & 2.13e-02 & 2.18e-02 & 2.21e-02\\
& $\F$  & 5.80e-02 & 5.12e-02 & 5.90e-02 & 5.21e-02\\
& $w_{\textup{iter}}$ & 634.2 & 400.1 & 202.3 & 191.2\\
& $CTS$ & - & 31.3\% & 58.0\% & 71.6\%\\
\hline
{\rm \textbf{Schwefel 2.20}}& Rate & 100.0\% & 100.0\% & 100.0\%\ & 100.0\%\\
& Error & 2.16e-07 & 8.89e-07 & 8.12e-07 & 2.34e-08\\
& $\F$  & 9.11e-05 & 3.02e-05 & 1.23e-05 & 3.22e-05\\
& $w_{\textup{iter}}$ & 465.2 & 360.1 & 320.2 & 191.1\\
& $CTS$ & - & 24.7\% & 33.2\% & 62.1\%\\
\hline
{\rm \textbf{Salomon}}& Rate & 100.0\% & 100.0\% & 100.0\%\ & 100.0\%\\
& Error & 4.13e-02 & 3.37e-02 & 2.77e-02 & 1.69e-02\\
& $\F$  & 4.21e-01 & 4.22e-01 & 4.10e-01 & 3.67e-01\\
& $w_{\textup{iter}}$ &  2455.1 & 1551.1 & 1242.3 & 892.3\\
& $CTS$ & - & 38.2\% & 50.2\% & 66.2\%\\
\hline
{\rm \textbf{XSY random \ }}& Rate & 100.0\% & 100.0\% & 100.0\%\ & 100.0\%\\
& Error & 1.54e-02 & 8.34e-02 & 8.90e-02 & 9.23e-02\\
& $\F$  & 6.34e-07 & 2.05e-05 & 6.34e-05 & 2.33e-04\\
& $w_{\textup{iter}}$ & 10000.0 & 2821.3 & 1921.7 & 1167.2\\
& $CTS$ & - &  70.2\% & 85.3\% & 89.7\%\\
\hline
{\rm \textbf{XSY 4 \ }}& Rate & 100.0\% & 100.0\% & 100.0\%\ & 100.0\%\\
& Error & 5.37e-01 & 3.90e-01 & 1.55e-01 & 1.67e-01\\
& $\F$  & 1.19e-05 & 6.23e-06 & 3.67e-06 & 3.99e-06\\
& $w_{\textup{iter}}$ & 8945.1 & 3967.3 & 1923.4 & 1055.7\\
& $CTS$ & - &  50.2\% & 69.3\% & 85.6\%\\
\hline
\end{tabular}}
\end{center}
\caption{CBO-ME algorithm with random selection of particles tested against different benchmark functions with different values of $\mu$, which regulates the random selection mechanism. The system is initialized with $N_0 = 200$ particles and $\sigma = 0.8$. Performance metric considered: success rate (see \eqref{eq:success}), error $\|\bar y^{\alpha, k}-x^*\|_\infty$, fitness value $\F(\bar y^{\alpha,k})$, weighted iteration \eqref{eq:witer}, and Computational Time Saved (CTS). Results are averaged over 250 runs.
}
\label{tab:reduction1}
\end{table}

\begin{table}
\begin{center}\scriptsize{
\begin{tabular}{|l|lc|c|c|c|}
\hline
& & $\mu = 0$ & $\mu = 0.1$ & $\mu = 0.2$ & $\mu = 0.5$ \\  
\hline
{\rm \textbf{Rastrigin}}& Rate & 100.0\% & 100.0\% & 100.0\%\ & 100.0\%\\
& Error & 9.22e-05 & 7.76e-05 & 3.54e-05 & 1.34e-05\\
& $\F$  & 2.90e-06 & 2.99e-06 & 1.45e-06 & 1.12e-06\\
& $w_{\textup{iter}}$ & 1150.3 & 720.6 & 250.5 & 106.3\\
& $CTS$ & - &  39.2\% & 78.9\% & 92.3\%\\
\hline
& & $\mu = 0$ & $\mu = 0.01$ & $\mu = 0.02$ & $\mu = 0.05$ \\  
\hline
{\rm \textbf{Rosenbrock}}& Rate & 100.0\% & 100.0\% & 99.4\%\ & 99.0\%\\
& Error & 2.12e-02 & 2.21e-02 & 1.78e-02 & 1.45e-02\\
& $\F$  & 4.22e-03 & 5.67e-03 & 4.12e-03 & 4.45e-03\\
& $w_{\textup{iter}}$ & 3189.3 & 840.3 & 350.3 & 102.3\\
& $CTS$ & - &  75.3\% & 90.2\% & 92.4\%\\
\hline
\end{tabular}}
\end{center}
\caption{CBO-ME algorithm with particle reduction tested against Rastrigin and Rosenbrock functions with an higher diffusion parameter $\sigma = 1.1$ and for different values of $\mu$, which regulates the random selection mechanism. The system is initialized with $N_0 = 200$ particles. Performance metric considered: success rate (see \eqref{eq:success}), error ($\|\bar y^{\alpha, k}-x^*\|_\infty$), fitness value $\F(\bar y^{\alpha,k})$, weighted iteration \eqref{eq:witer}, and Computational Time Saved (CTS).}
\label{tab:reduction2}
\end{table}


Fig.s \ref{fig:ackley} and \ref{fig:rastrigin} show error and fitness value as a function of the number of fitness evaluations during the algorithm computation for Ackley and Rastrigin problems, respectively. Several values of $\mu$ are considered to display how the random selection mechanism affects the convergence speed. Initial particle population is set to $N_0 = 10^4$ and particles evolve for $k_{\textup{max}} = 10^4$ iterations. We note how convergence speed increases as $\mu$ increases.



\begin{figure}
\centering
\hfill
\begin{subfigure}{0.43\linewidth}
\includegraphics[width = 1\linewidth]{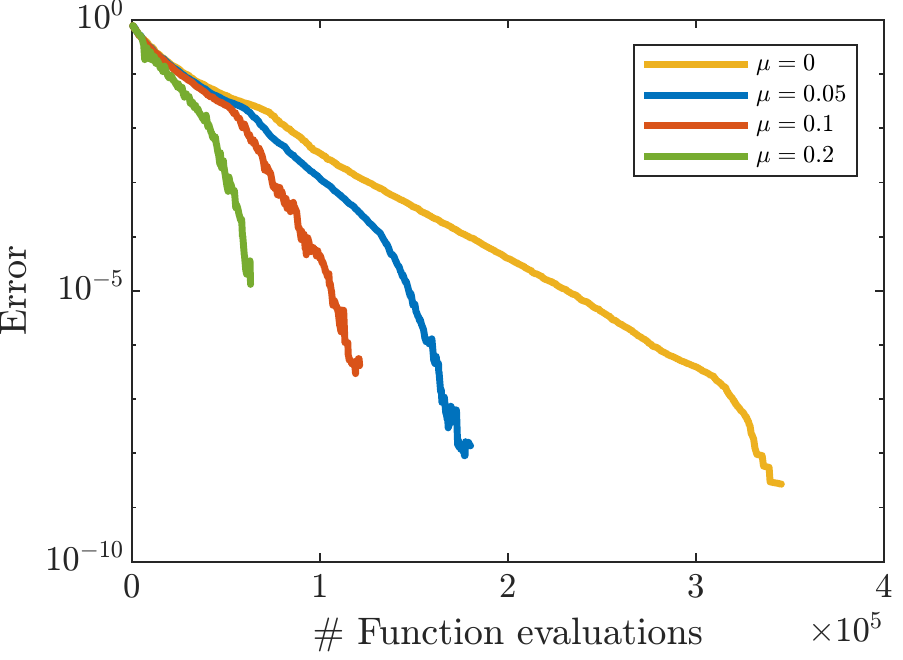}
\caption{Error: $\|\bar y^{\alpha,k} -x^{\ast} \|_\infty $} 
\end{subfigure}
\hfill
\begin{subfigure}{0.43\linewidth}
\includegraphics[width = \linewidth]{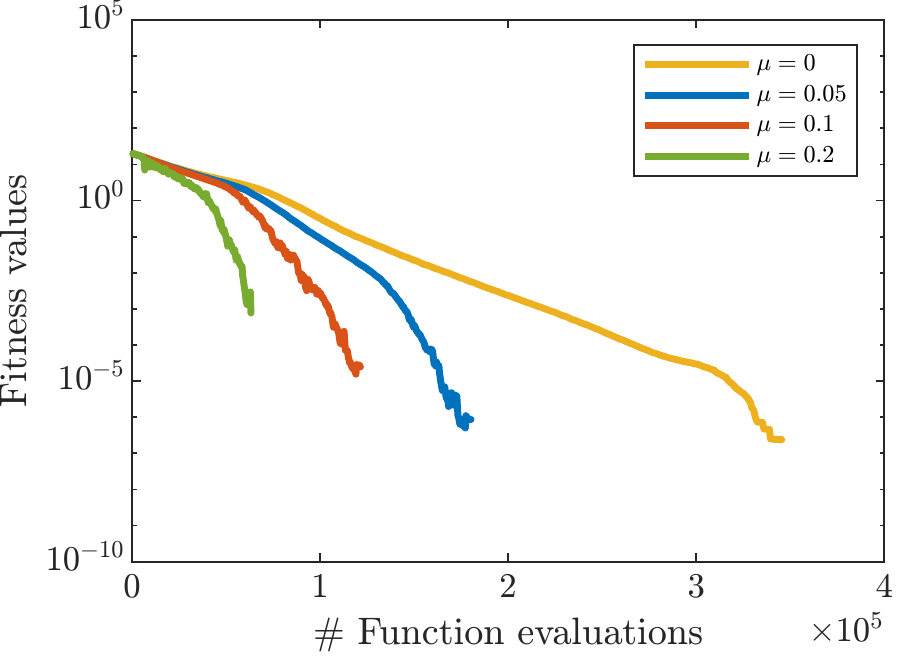}
\caption{Fitness value: $\F(\bar y^{\alpha,k})$}
\end{subfigure} 
\caption{Optimization of Ackley function for different values of the random selection parameter $\mu$, where the initial particle population is $N_0 = 10^4$. We report error (on the left) and fitness values (on the right) as the number of function evaluations increases.  Parameters are set as $\lambda = 0.01$, $\sigma = 0.8$, 
$\alpha$ adaptive starting from $\alpha_0 = 10$ and following the law $\alpha = \alpha_0 \cdot k \cdot\log_2(k)$. Results are averaged over 250 runs.
}
\label{fig:ackley}
\end{figure}


\begin{figure}[H]
\centering
\hfill
\begin{subfigure}{0.43\linewidth}
\includegraphics[width = 1\linewidth]{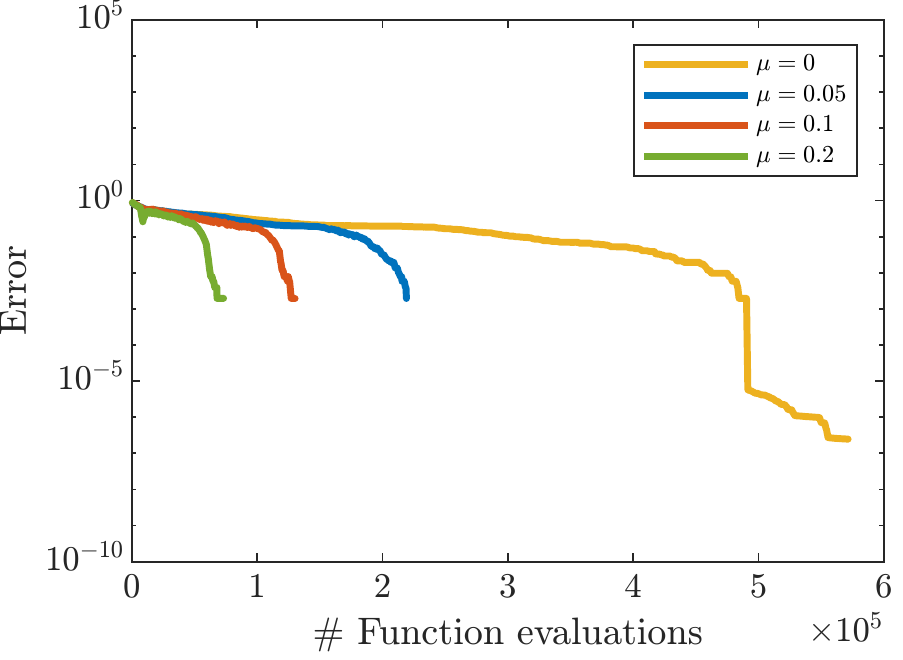}
\caption{Error: $\|\bar y^{\alpha,k} -x^{\ast} \|_\infty $}
\end{subfigure}
\hfill
\begin{subfigure}{0.43\linewidth}
\includegraphics[width = 1\linewidth]{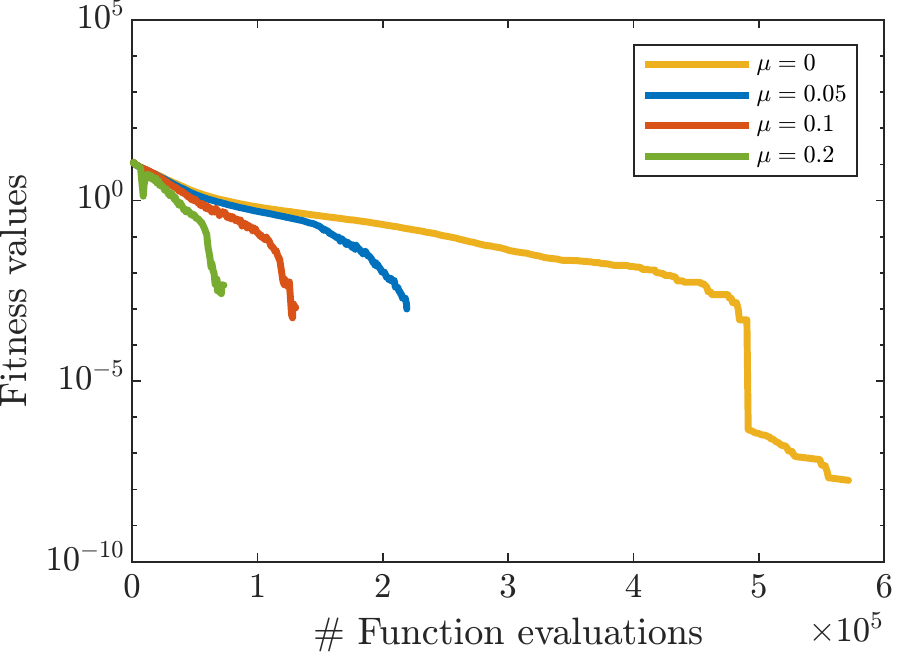}
\caption{Fitness value: $\F(\bar y^{\alpha,k})$}
\end{subfigure}
\hfill
\caption{Optimization of Rastrigin function for different values of the random selection parameter $\mu$ where the initial particle population is $N_0 = 10^4$. We report error (on the left) and fitness values (on the right) as the number of function evaluations increases. Parameters are set as $\lambda = 0.01$, $\sigma = 1.1$, 
$\alpha$ adaptive starting from $\alpha_0 = 10$ and following the law $\alpha = \alpha_0 \cdot k \cdot\log_2(k)$. Results are averaged over 250 runs. 
 }
 \label{fig:rastrigin}
\end{figure}


\subsection{Applications}
In this section, we propose some applications of the proposed optimization algorithm. First we consider a image segmentation problem using multi-thresholding, then we use the CBO-ME to train a Neural Network (NN) architecture to approximate functions and perform image classification on the MNIST database of handwritten digits. 

\subsubsection{Image segmentation}

To perform image segmentation, we use a threshold detection technique, namely, the multidimensional Otsu algorithm \cite{otsu1979threshold,tsai2012novel} in order to compare the results to similar optimization algorithm, such as the Modified PSO in \cite{tian2018mpso}.

\gb{
In the Otsu algorithm, every pixel of the image is assigned to one of the possible $L$ grayscale values. We denote with $\eta_i$ the number of pixel with gray level $i$, $1 \leq i \leq L$ and $N_{pix}= \sum_{i=1}^L \eta_i$ the total number of pixels  \cite{otsu1979threshold}.  We consider an extension of Otsu's technique to the multidimensional case \cite{tsai2012novel} to test capabilities of method.
Assuming we want to optimize the choice of $\d$ thresholds, we require $\d+1$ classes of different gray-scales $(C_0, \dots, C_\d)$ with relative probabilities of occurrence classes defined as
\begin{align*} 
\omega_0(l_1) = \sum_{i = 1}^{l_1} p_i \ , \quad \dots , \quad \ \omega_\d(l_\d) = \sum_{i = l_\d+1}^{L} p_i, \quad p_i = \frac{\eta_i}{N_{pix}}
\end{align*}
and classes mean levels
\begin{align*}
\mu_0(l_1) = \frac{\sum_{i = 1}^{l_1} i p_i}{\omega_0} , \quad  \dots ,\quad \mu_\d(l_\d) = \frac{\sum_{i = l_{\d+1}}^{L} i p_i}{\omega_\d}, 
\end{align*}
The optimal thresholds $(\hat l_1,\dots,\hat l_\d )$ are those that satisfy $\hat l_1<\dots<\hat l_\d$ and maximise
\begin{align} \label{eq: funmin2}
f(l_1, \dots l_{\d}) = \sum_{i=1}^{\d} \omega_i(l_i) \mu_i^2(l_i)\,.
\end{align}
}

For the experiment, we chose $\d = 5 $ thresholds and compare the segmentation performed by Otsu's method, solved with both standard PSO and CBO-ME, with segmentation obtained by dividing the grayscale into $\d+1$ uniformly spaced intervals. For PSO, we use to the default parameters in the \texttt{particleswarm} function in the MATLAB Global Optimisation Toolbox, while for CBO-ME we used optimal parameters found in Section \ref{sec:benchmark} and exploit the random selection technique to speed up the algorithm.

We report the results on two sample images, Fig.s \ref{fig:segm_woman} and \ref{fig:segm_lake}. We fix $k_{\max} = 10^3$ and average results over $250$ runs. 
As in \cite{arora2008multilevel}, we evaluate multi-thresholding segmentation through the Peak Signal to Noise Ratio ($PSNR$) computed as:
\begin{align*} 
PSNR = 20 \cdot \log_{10} \left( \frac{255}{RMSE} \right)
\end{align*}
where $RMSE$ is the Root Mean-Squared Error, defined as
\begin{align*} 
RMSE = \sqrt{\frac{1}{N_{pix}}\sum^{N_{row}}_{i=1} \sum^{N_{col}}_{j=1} \left[I(i,j)-S(i,j)\right]^2}
\end{align*}
where $N_{pix} = N_{row}\cdot N_{col}$, $I$ is the original image and $S$ is the associated segmented image. The higher the value of $PSNR$ is, the greater the similarity between the clustered image and the original image is.
From Fig.s \ref{fig:segm_woman},\ref{fig:segm_lake}, we note that the most accurate segmentation on details is obtained by the CBO-ME method. This is quantitatively confirmed by the $PSNR$ values reported in Table \ref{tab:accuracy}. 


\begin{figure}
\centering
\begin{subfigure}{0.24\linewidth}
\includegraphics[width = 0.9\linewidth]{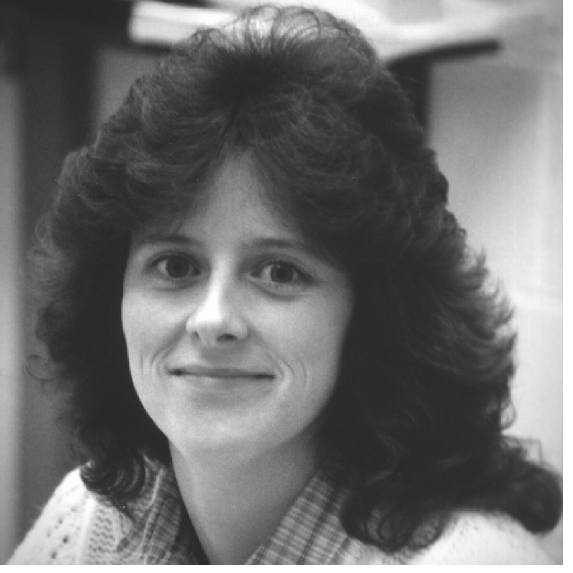} 
\caption{Original}
\end{subfigure}
\begin{subfigure}{0.24\linewidth}
\includegraphics[width = 0.9\linewidth]{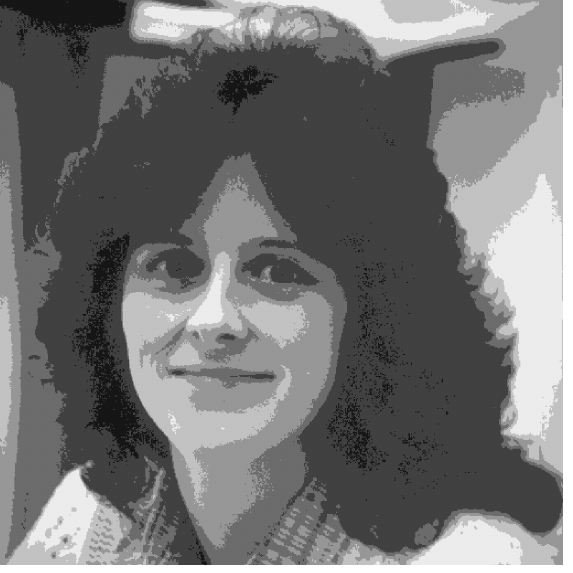} 
\caption{Standard segmentation}
\end{subfigure}
\begin{subfigure}{0.24\linewidth}
\includegraphics[width = 0.9\linewidth]{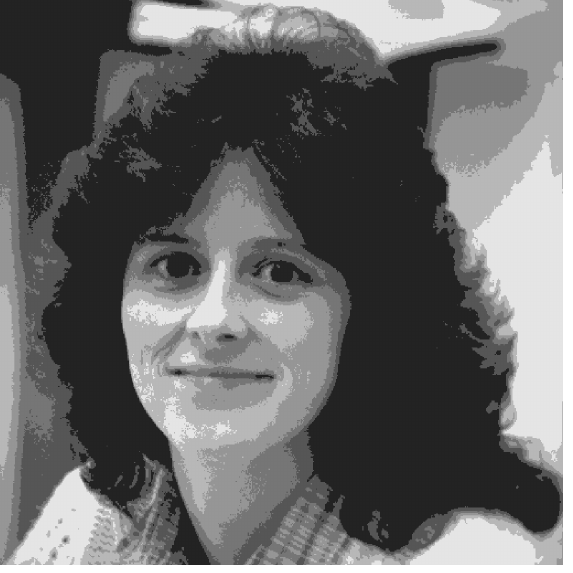} 
\caption{Otsu seg. (PSO)}
\end{subfigure}
\begin{subfigure}{0.24\linewidth}
\includegraphics[width = 0.9\linewidth]{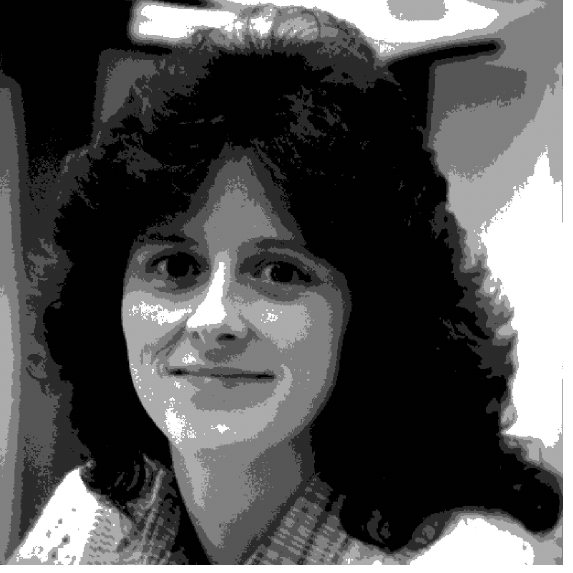} 
\caption{Otsu seg. (CBO-ME)}
\end{subfigure}
\caption{Image segmentation of \textmyfont{darkhair woman} image $(256 \times 256$ pixels$)$ with standard segmentation and Otsu segmentation solved respectively by PSO (c) and by CBO-ME (d). Results are averaged over 250 runs, with an initial population of $N_0 =10^3$ particles. 
 }
 \label{fig:segm_woman}
\end{figure}

\begin{figure}
\centering
\begin{subfigure}{0.24\linewidth}
\includegraphics[width = 0.9\linewidth]{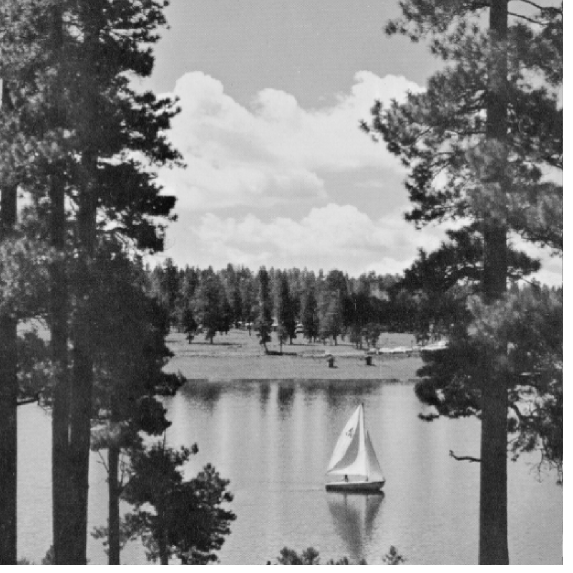} 
\caption{Original}
\end{subfigure} \hfill
\begin{subfigure}{0.24\linewidth}
\includegraphics[width = 0.9\linewidth]{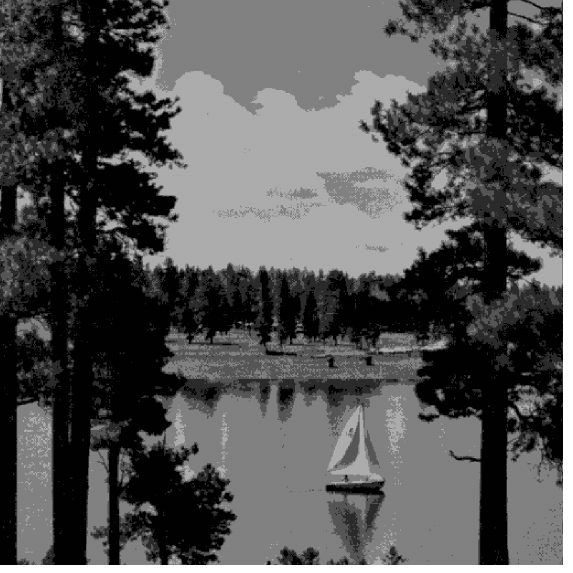} 
\caption{Standard segmentation}
\end{subfigure}\hfill
\begin{subfigure}{0.24\linewidth}
\includegraphics[width = 0.9\linewidth]{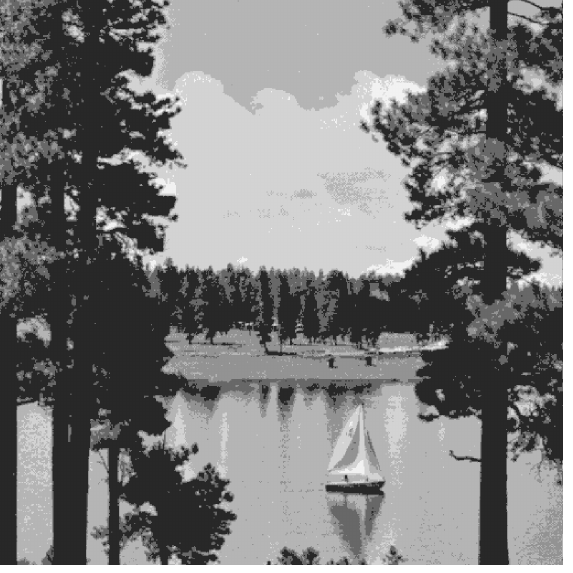} 
\caption{Otsu seg. (PSO)}
\end{subfigure} \hfill
\begin{subfigure}{0.24\linewidth}
\includegraphics[width = 0.9\linewidth]{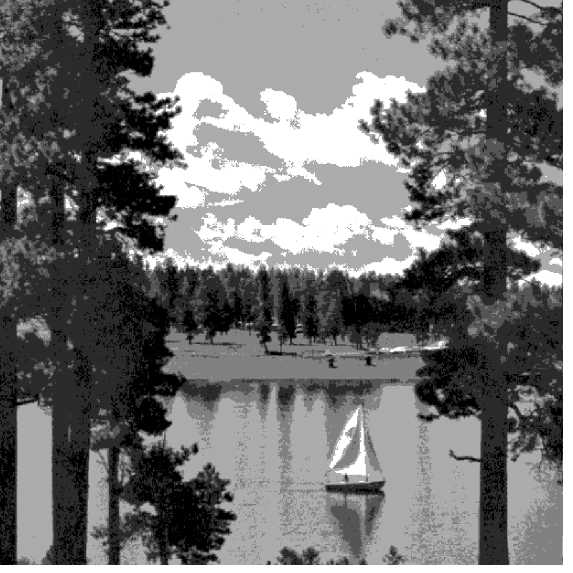} 
\caption{Otsu seg. (CBO-ME)}
\end{subfigure}
\caption{Image segmentation of \textmyfont{lake} image $(256 \times 256$ pixels$)$ with standard segmentation and Otsu segmentation solved respectively by PSO (c) and by CBO-ME (d). Results are averaged over 250 runs, with an initial population of $N_0 =10^3$ particles. 
}
\label{fig:segm_lake}
\end{figure}

\begin{table}
\begin{center}{
\scriptsize{ \renewcommand\arraystretch{1.3}{
\begin{tabular}{|l|c|c|c|c|c|}
\hline
&  \textmyfont{cameraman} & \textmyfont{lake} &  \textmyfont{lena} &  \textmyfont{peppers} &  \textmyfont{woman darkhair}  \\  
\hline  \hline
{ \rm \textbf{Standard segmentation}} &   22.83 & 21.72 & 24.35 & 27.24 & 25.33\\
\hline
{\rm \ \splitcell[b]{\textbf{Otsu segmentation} \\ \hspace{-20pt} \textbf{(PSO)}}} &  34.62 & 32.33 & 38.19 &  38.03 &  37.14 \ \\
\hline
{\rm \splitcell[b]{\textbf{Otsu segmentation} \\ \ \textbf{(CBO-ME)}}} &  {37.22} & {35.44} & {38.72} & {38.28} & {39.57}\ \\

\hline 
\end{tabular}}}}
\end{center}
\caption{$PSNR$ values obtained for $5$ sample images known in literature. We compared the Otsu segmentation solved by the proposed CBO-ME method with the classical PSO method and with equispaced thresholding segmentation. Experiments are performed with $\textup{d}=5$ thresholds.
}
\label{tab:accuracy}
\end{table}


\subsubsection{Approximating functions with NN}

In this section, we use the proposed CBO-ME algorithm to train a NN architecture into approximating a function $u: I \to \RR, I \subset \RR$ with low regularity.
As in \cite{jin2022adaptive}, we use a fully-connected NN with $m$ layers 
\be
f(x; \theta) =  ( L_m \circ \dots L_2 \circ L_1 )(x)
\label{eq:NN}
\ee
where each layer is given by
\be \notag
L_i = \sigma (W^i x + b^i)
\ee
with $\sigma(x) = 1/(1+ \exp(-x))$ being the component-wise sigmoid function. We use internal layers of dimension $n$, so $W^1 \in \RR^{n \times 1}, b^1 \in \RR$, $W^m \in \RR^{1\times n}, b^m \in \RR^\textup{d}$ and $W^i \in \RR^{n\times n}$ for all $i=2, \dots, m-1$. In \eqref{eq:NN}, all DNN parameters are collected in $\theta = \{W^i, b^i \}_{i=1}^m.$

As loss function which need to be minimized, we consider the $L^2$-norm between the target function $u$ and its NN approximation $f(\cdot \,; \theta)$
\be
\F( \theta): = \| f(\cdot\,; \theta) - u \|_{L^2(I)}\,. 
\ee
Again, similarly to \cite{jin2022adaptive}, we test the method against the following two functions:
\begin{align}
 \label{approx_smooth}
u_1(x) &= \sin(2\pi x) + \sin(8\pi x^2) \\
 \label{approx_nosmooth}
u_2(x) &= 
\begin{cases} 1  &\textup{if} \;\; x < -\frac{7}{8}, -\frac{1}{8}<x< \frac{1}{8}, x > \frac{7}{8}\,, \\
-1 &\textup{if} \;\; \frac{3}{8}<x< \frac{5}{8}, -\frac{5}{8}<x< -\frac{3}{8}, \\
0    &\textup{otherwise} \,.
\end{cases}
\end{align} 
We note that $u_1$ is smooth, while $u_2$ is discontinuous. Parameters of the CBO-ME algorithm have been set to $\lambda = 0.01, \sigma = 0.8$, 
as in the previous sections. Parameter $\alpha$ is adapted during the computation as in \eqref{alpha_adaptive} and random selection mechanism is used. We employ $m=3$ layers with internal dimension $n=50$. Results are displayed in Fig.s \ref{fig:u1} and \ref{fig:u2}. We note that convergence is slower for the discontinuous step function $u_2$. 

\begin{figure}
\centering
\begin{subfigure}{0.24\linewidth}
\includegraphics[width = 0.95\linewidth]{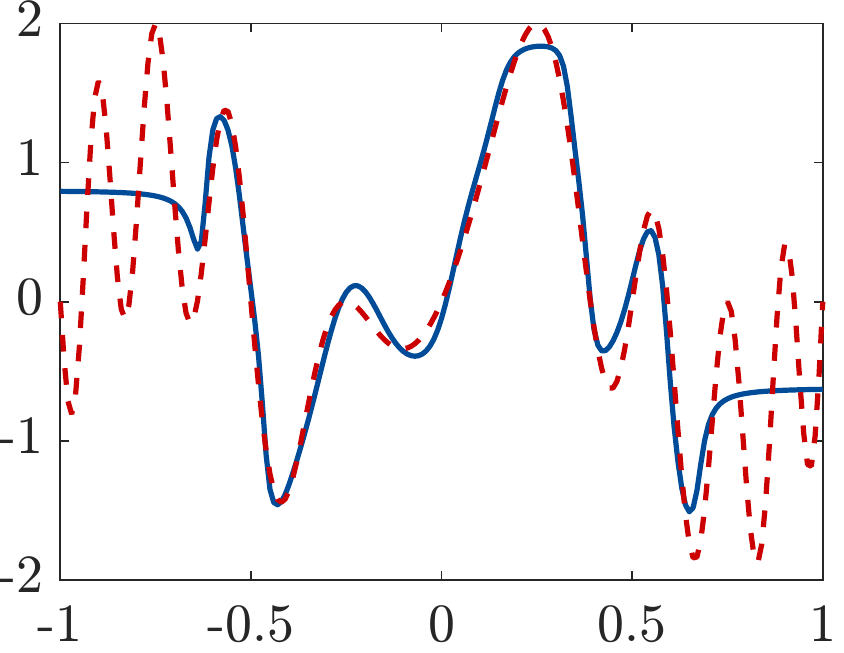} 
\caption{$2000$ epochs}
\end{subfigure}
\begin{subfigure}{0.24\linewidth}
\includegraphics[width = 0.95\linewidth]{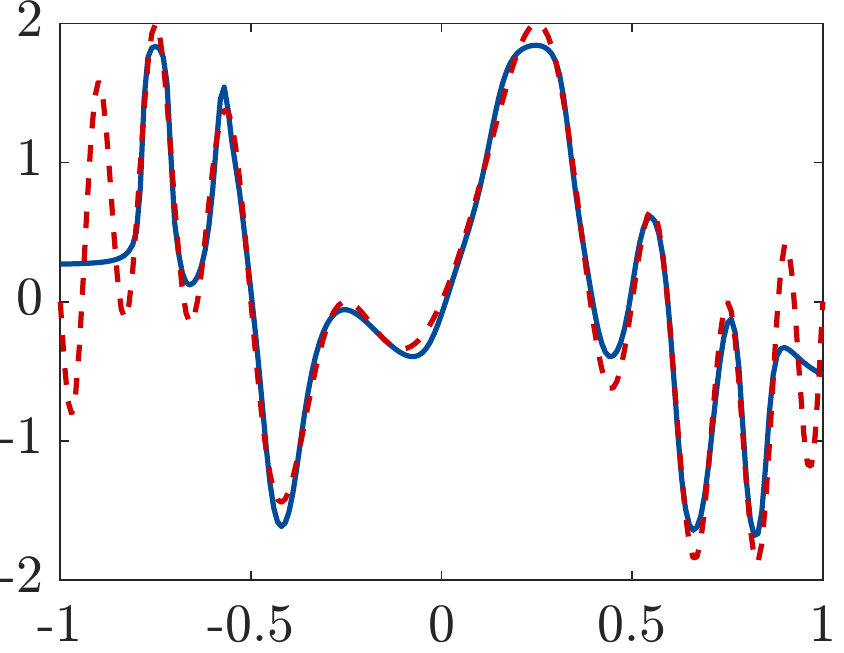} 
\caption{$3000$ epochs}
\end{subfigure}
\begin{subfigure}{0.24\linewidth}
\includegraphics[width = 0.95\linewidth]{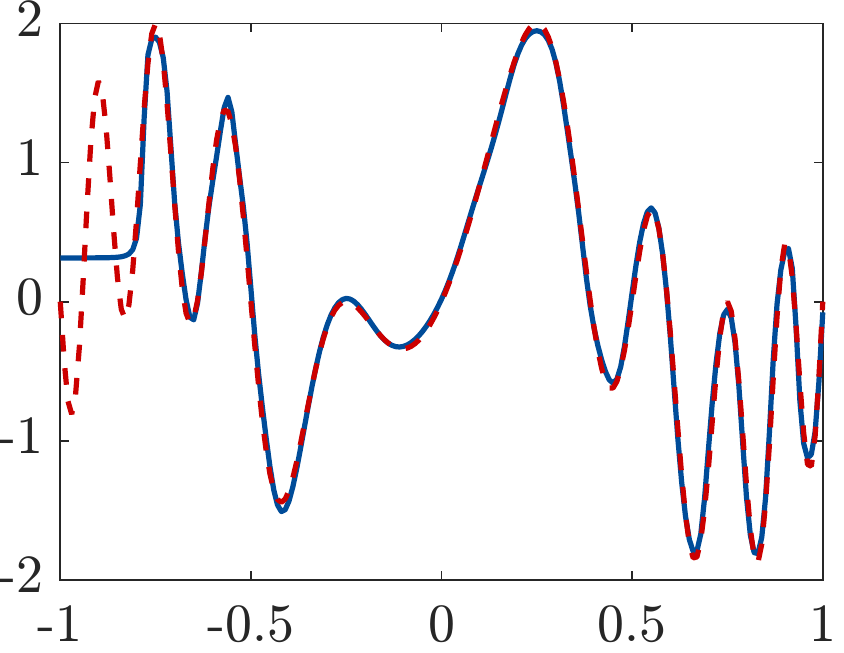} 
\caption{$5000$ epochs}
\end{subfigure}
\begin{subfigure}{0.24\linewidth}
\includegraphics[width = 0.95\linewidth]{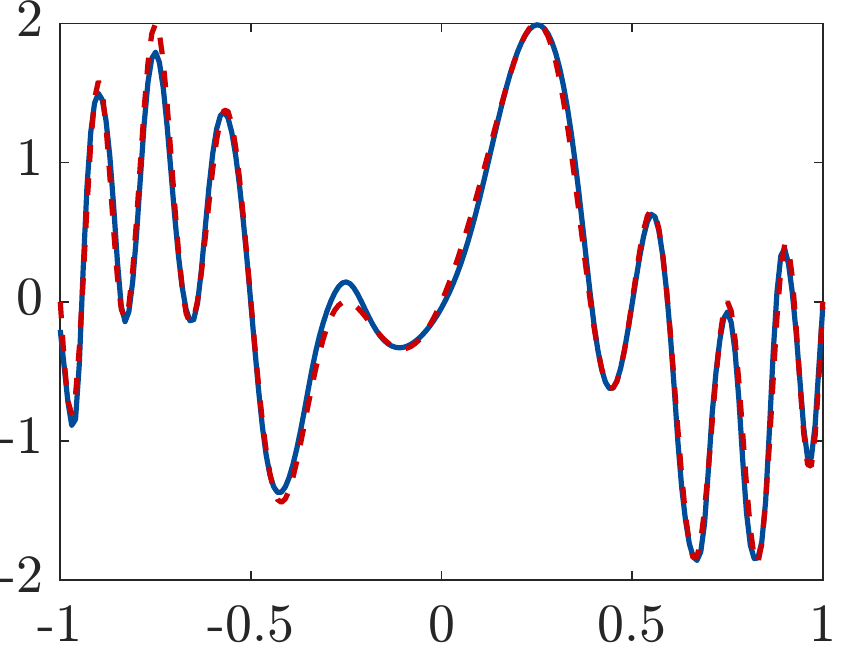} 
\caption{$8000$ epochs}
\end{subfigure}
\caption{Approximating smooth function $u_1$ \eqref{approx_smooth} using a network with $n = 50$ and  $m = 3$. We initially use $N_0 = 500$ particles and we set $\lambda = 0.01$, $\sigma = 0.8$ . Parameter $\alpha$ is adaptive, starting from $\alpha_0 = 10$.}
\label{fig:u1}
\end{figure}

\begin{figure}
\centering
\begin{subfigure}{0.24\linewidth}
\includegraphics[width = 0.95\linewidth]{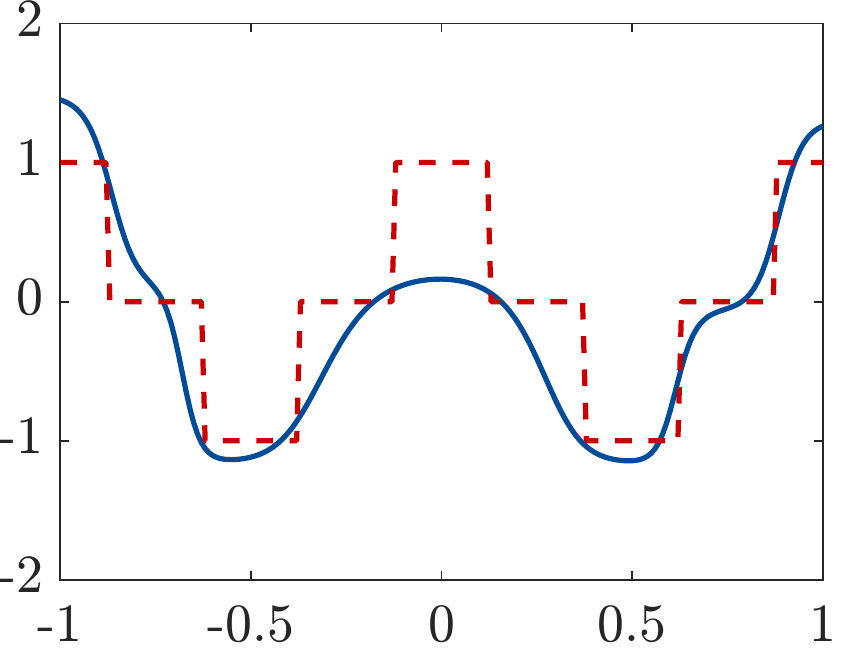} 
\caption{$2000$ epochs}
\end{subfigure}
\begin{subfigure}{0.24\linewidth}
\includegraphics[width = 0.95\linewidth]{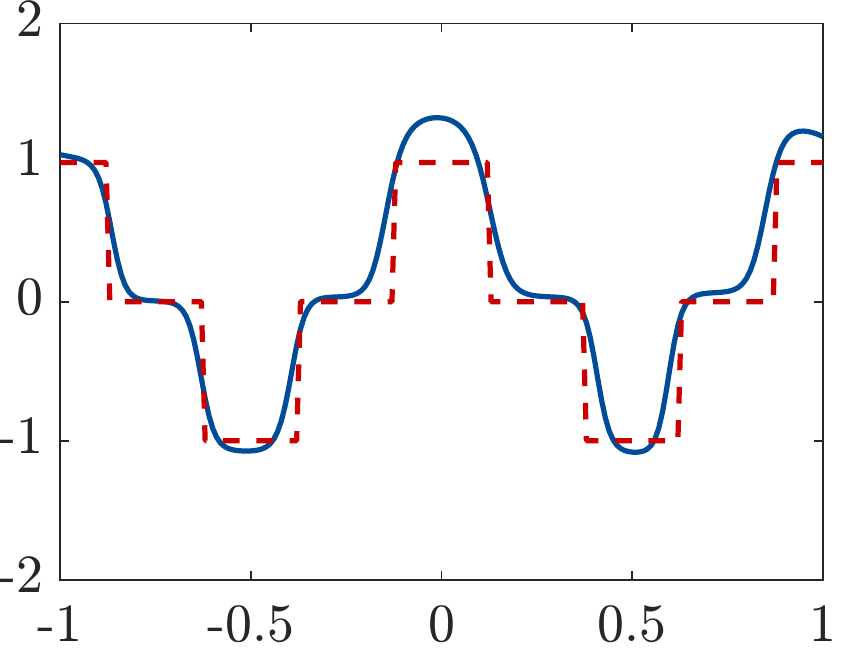} 
\caption{$3000$ epochs}
\end{subfigure}
\begin{subfigure}{0.24\linewidth}
\includegraphics[width = 0.95\linewidth]{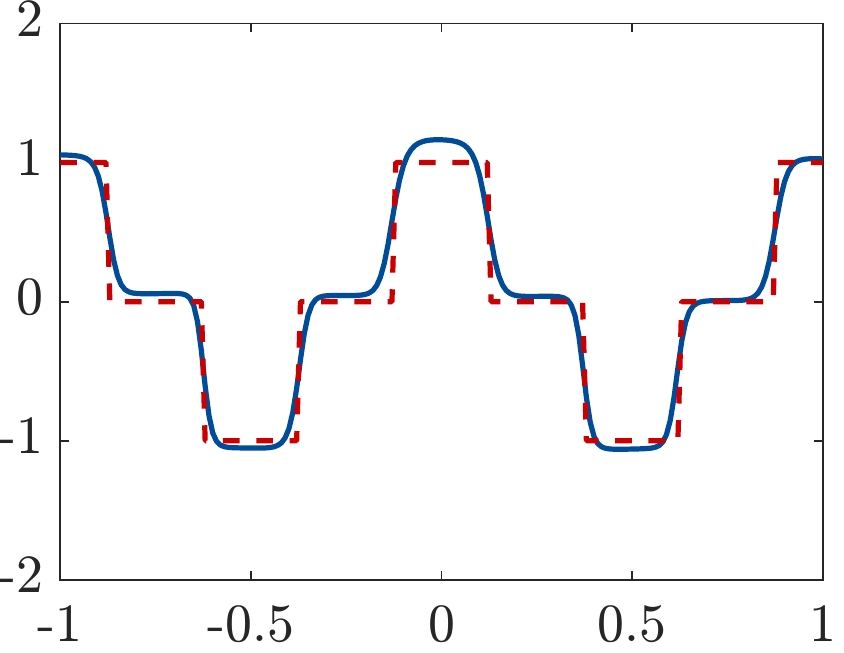} 
\caption{$5000$ epochs}
\end{subfigure}
\begin{subfigure}{0.24\linewidth}
\includegraphics[width = 0.95\linewidth]{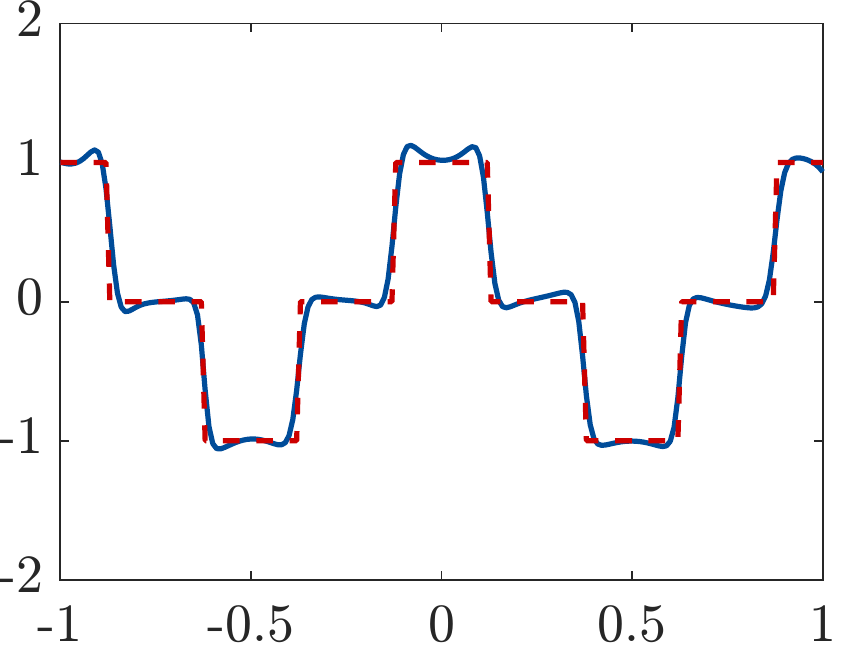} 
\caption{$8000$ epochs}
\end{subfigure}
\caption{Approximating non-smooth $u_2$ \eqref{approx_nosmooth} function using a network with $n = 50$, $m = 3$.We initially use $N_0 = 500$ particles and we set $\lambda = 0.01$, $\sigma = 0.8$ . Parameter $\alpha$ is adaptive, starting from $\alpha_0 = 10$.}
\label{fig:u2}
\end{figure}

%


\bigskip

\subsubsection{Application on MNIST dataset}

\gb{We now employ the proposed algorithm to train a NN architecture to solve a image classification task. We will consider the MNIST dataset \cite{mnist} composed of handwritten digits in gray scale with $28\times28$ pixels. For better comparability with CBO methods without memory effects, we follow the experiment settings used in the literature \cite{carrillo2019consensus,fornasier2022aniso,riedl2022leveraging,benfenati2021binary}, which we summarize below.

We consider a $1$-layer NN where input images $x \in \RR^{28\times28}$ are first vectorized  $x\mapsto \textup{vec}(x) \in \RR^{\gb{784}}$ and then processed through a fully-connected layer with parameters $\theta = \{W, b\}$, where $W \in \RR^{10\times \gb{784}}, b \in \RR^{10}$. That is, the network is given by 
\be
f^{\textup{SNN}}(x; \theta) = \textup{softmax}\left(\textup{ReLU}\left(W \textup{vec}(x) + b \right)\right) ,
\label{eq:shallowNN}
\ee
where $\textup{ReLU}(z) = \max\{z,0\}$ (component-wise) and $\textup{softmax}(z) = (e^{z_1}, \dots,e^{z_n})/(\sum_i e^{z_i})$ are the commonly used activation functions. 
During the training, batch regularization is performed after $\textup{ReLU}$ is applied in order to speed up convergence. Given a training set $\{ (x^m, \ell^m)\}_{m=1}^M$, $x_m \in \RR^{28 \times 28}, \ell_m \in \{0,1\}^{10}$ made of $M$ image-label tuples, we train the model by minimizing the categorical cross-entropy loss 
\be
\F (\theta) = \frac1M \sum_{m=1}^M \left( -  \sum_{i=1}^{10} \ell_{i}^m \log (f_i(x^m, \theta))    \right)\,.
\label{eq:lossMNIST}
\ee 
The entire training set is made of 60,000 images, 6,000 per class, but we divide it in batches of size $M = 120$, and consider a different batch at each algorithm iteration to evaluate \eqref{eq:lossMNIST}.

The initial population of $N_0$ particles is sampled from the standard normal distribution $\mathcal{N}(0, \mathbf{I}_\d)$ and we employ the particle reduction strategy given by \eqref{eq:reduction} with $N_{\textup{min}} = 100$. Differently from previous experiments, though, we compute the new number of particles $N_{k+1}$ based on the variance of the personal bests $\{y_i^{k} \}_{i \in N_k}$, rather then considering the particle positions $\{x_i^k \}_{i \in N_k}$. This is because, in this application, the variance of the particle positions shows an oscillatory behavior (see Fig. \ref{fig:MNIST_muvary_var}).

Following the mini-batch approach suggested in \cite{carrillo2019consensus},
at each algorithm iteration we divide the particle population in mini-batches of size $n_N = 20$ (the last one being eventually smaller) and independently perform the update within the different mini-batches. Particles are re-ordered after each update step, so that the mini-batches always vary during the computation.

Fig. \ref{fig:MNIST_muvary}, shows the algorithm performance when $N_0 = 1000$  particles  are initially used and random selection is performed with different parameters $\mu$. We note that, in terms of accuracy and loss, the performance is comparable. The random selection strategy sensibly reduces the number of function evaluations needed, especially when the particle system has already formed consensus. The number of particles per iteration is displayed in Fig. \ref{fig:MNIST_muvary_var}, further showing how the computational complexity of an update step decays during the computation.  

We also compare the algorithm performance when different initial population sizes $N_0 = 500, 1000, 2000$ are considered, with same random selection strength $\mu=0.1$. In this case, the best balance between computational cost and accuracy is given by $N_0 = 1000$. We note in particular how starting with a larger population size of $2000$ particles leads to a marginal improvement of the algorithm performance, while requiring a much higher number of loss evaluations.

In the last experiment, we compare algorithms CBO-ME and CBO without memory effects. We also consider a third heuristic proposed in \cite{carrillo2019consensus} and further tested in \cite{fornasier2022aniso}. In this CBO variant, no memory is employed, but, whenever $\| \bar x^{\alpha,k+1} - \bar x^{\alpha,k}\|_{\infty} \leq \delta$, particles are
 randomly perturbed before the CBO iteration, by adding Gaussian noise:
\be
x_i^k \gets x_i^k + \sigma \tilde \theta^k_i \quad \textup{with} \quad \tilde \theta^k_i \sim \mathcal{N}(0, \mathbf{I}_\textup{d})\, .
\label{eq:perturbation}
\ee
In our experiment, we also consider the above variant with $\delta = 10^{-5}$ and $\bar x^{\alpha,k}$ computed among the whole particle system.

Fig. \ref{fig:MNIST_CBO} illustrates the performance of the different algorithms for three different choices of parameter $\alpha$: increasing, fixed to $\alpha=50$ and fixed to $\alpha = 5 \cdot 10^4$. The drift parameter is set to $\lambda = 0.1$ while we set $\sigma = \sqrt{0.1}$ for CBO-ME and CBO without random perturbations and $\sigma = \sqrt{0.04}$ for CBO when random noise is added. Initial populations of $N_0 = 1000$ with selection parameter $\mu=0.2$ are used when there is no additional noise, while we use $N_0 = 250$ and no particle selection when we perform random perturbations. This is motivated by the fact that the particle variance, whenever noise is added, shows an oscillatory behavior which is not compatible with the mechanism of random selection. 

We note that CBO-ME performs better when lower values of $\alpha$ are used, while the effect of memory is reduced for larger values of $\alpha$. A low value of sigma, together with random perturbations, typically slow down the convergence of the particle system.

\begin{remark}
\begin{itemize}
\item In CBO literature, an additional parameter $\Delta t$ is typically used to define step size $\lambda = \tilde \lambda \Delta t$ and the diffusion strength $\sigma = \tilde \sigma \sqrt{\Delta t}$, for some $\tilde \lambda, \tilde \sigma$. This is because the particles update rule is interpreted as a numerical scheme solving a time-continuous dynamics, as we will see in the next Section. We decided here to avoid using $\Delta t$ for better comparability with PSO algorithms. We note how choosing, for instance, $\lambda = 0.1, \sigma = \sqrt{0.1}$ is equivalent to the parameters choice $\tilde \lambda = 1, \tilde \sigma = 1$ with $\Delta t = 0.1$.

\item Experiments show how both CBO and CBO-ME converges towards a solution in less than an epoch. This is coherent with other population-based algorithms, such as Ensemble Kalman Filter \cite{alper2020klaman}. Moreover, we note how adding noise  during the computation sensibly reduces the convergence speed, see Fig. \ref{fig:MNIST_CBO}.
\end{itemize}
\end{remark}
}

\begin{figure}
\begin{subfigure}{1\linewidth}
\centering
\includegraphics[trim  = 3mm 0 7.3cm 5mm, clip, height = 5.5cm]{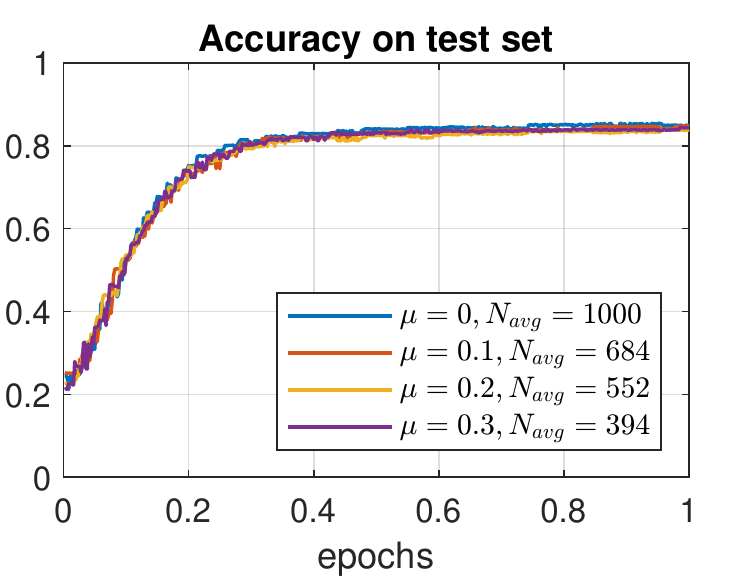} 
\includegraphics[trim  = 7cm 0 0 5mm, clip, height = 5.5cm]{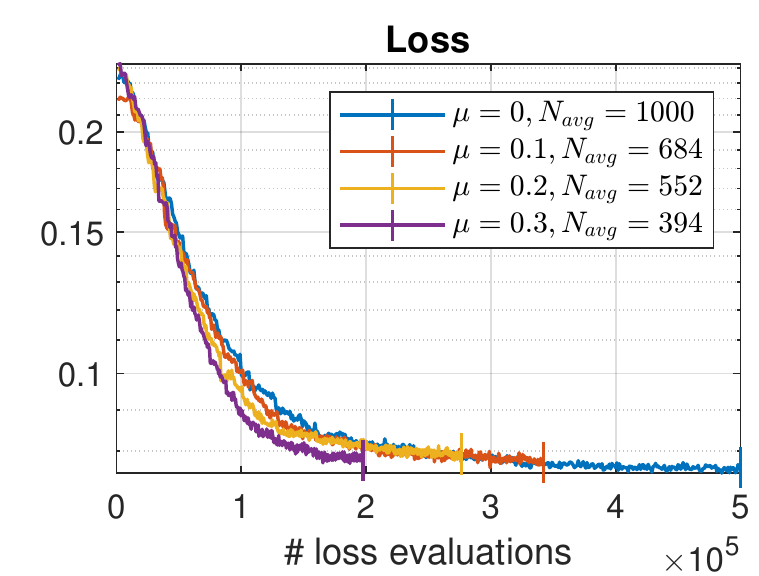} 
\caption{Different random selection parameters $\mu$, same initial population size $N_0 = 1000$}
\label{fig:MNIST_muvary}
\end{subfigure}\\
\begin{subfigure}{1\linewidth}
\centering
\includegraphics[trim  = 0mm 0 7.3cm 5mm, clip, height = 5.5cm]{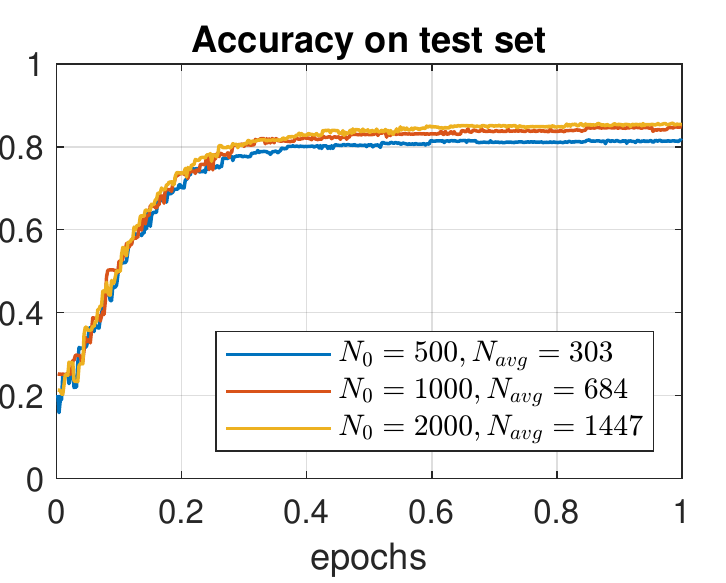}
\includegraphics[trim  = 7cm 0 0 5mm, clip, height = 5.5cm]{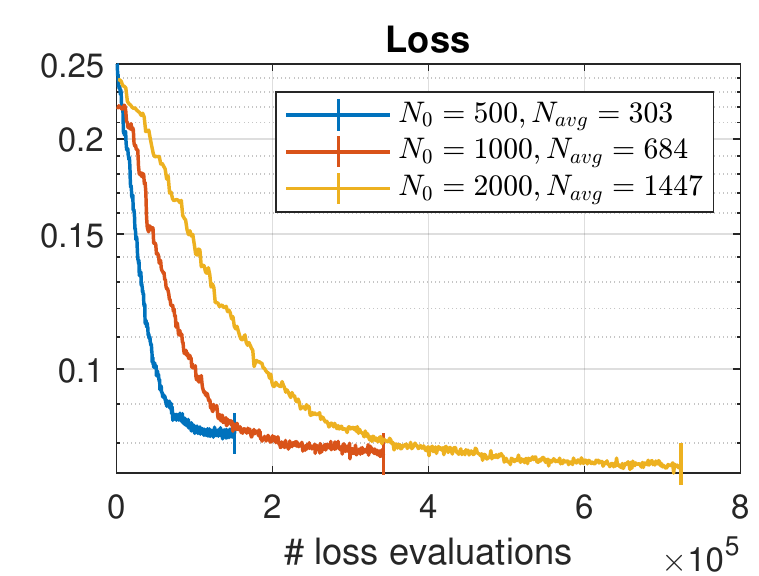} 
\caption{Different initial population sizes $N_0$, same random selection parameter $\mu=0.1$}
\label{fig:MNIST_Nvary}
\end{subfigure}
\caption{Performance of CBO-ME algorithm in training a shallow NN for MNIST classification. Experiment with different combinations of random selection parameter $\mu$ and initial population sizes are considered. Plots on the left display accuracy as a function of the amount of training data considered. On the right, the loss is displayed as a function of the number of loss evaluations. Clearly, when
less particles are employed (either due to large $\mu$ or to small $N_0$), fewer loss evaluations are needed. The average number of particles is denoted by $N_{avg}$. Algorithm parameters are set to $\lambda = 0.1, \sigma = \sqrt{0.1}, \alpha = 5\cdot 10^4$}
\end{figure}

\begin{figure}
\centering
\includegraphics[trim  = 0 0 0 5mm, clip,width = 1\linewidth]{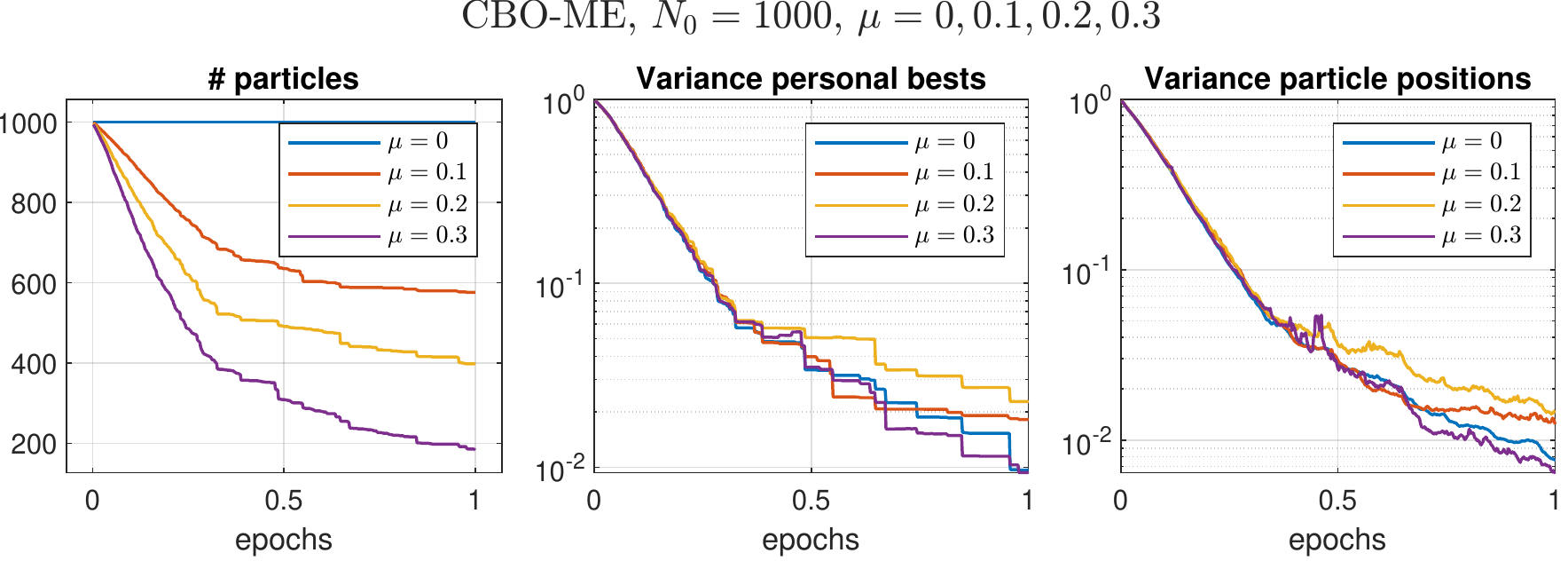} 
\caption{Population statistics during the training of shallow NN with CBO-ME method. Experiments with different random selection parameters $\mu$ and initial population sizes $N_0 = 1000$. Algorithm parameters are set to $\lambda = 0.1, \sigma = \sqrt{0.1}, \alpha = 5\cdot 10^4$}
\label{fig:MNIST_muvary_var}
\end{figure}

\begin{figure}
\centering
\includegraphics[trim  = 0 0 0 5mm, clip, width = 1\linewidth]{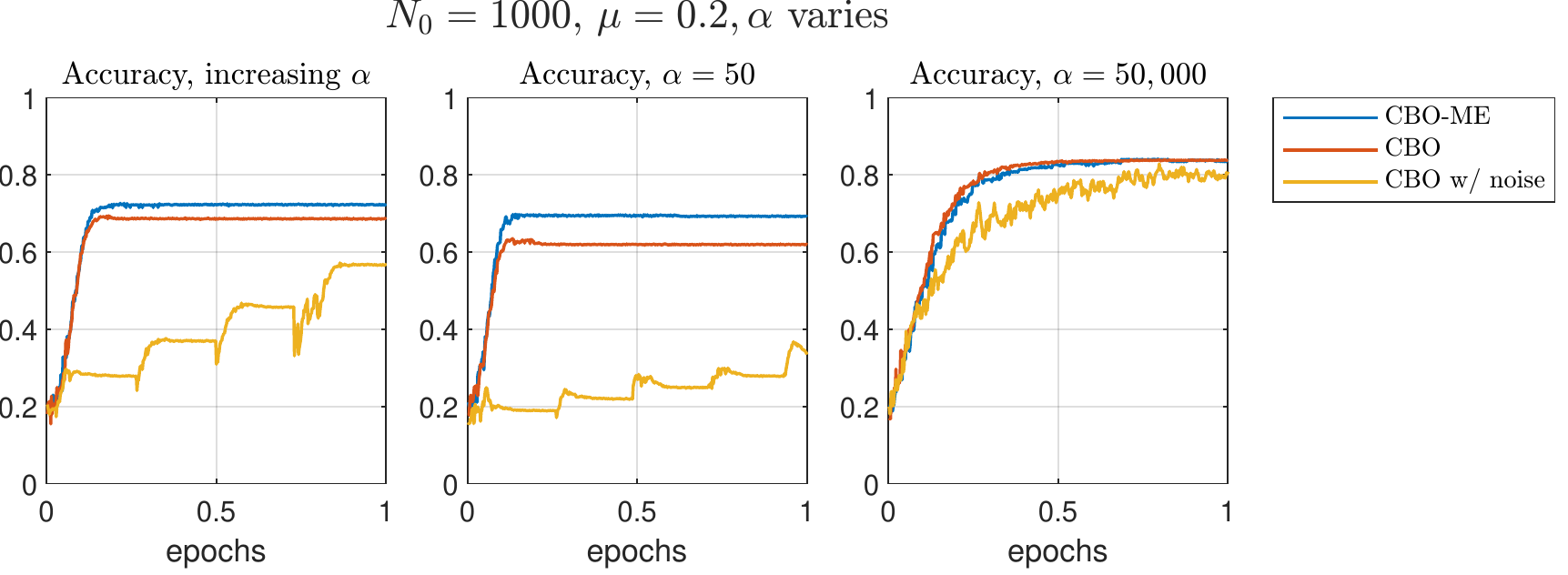} 
\caption{Performance comparison between different consensus algorithms in training of shallow NN. We consider the proposed CBO-ME algorithm, the plain CBO algorithm and the CBO algorithm with random perturbation \eqref{eq:perturbation}, as proposed in \cite{carrillo2019consensus}. For the first two algorithms we set $\sigma = \sqrt{0.1}$, while for the third one $\sigma = \sqrt{0.04}$. We set $\lambda = 0.1$ and consider three different strategies for $\alpha$: $\alpha_k = 5\cdot k, \alpha_k = 50$ and $\alpha_k = 5 \cdot 10^4$}
\label{fig:MNIST_CBO}
\end{figure}

\section{Theoretical analysis}
\label{sec:convergence}
A strength of CBO algorithms lays on the possibility of theoretically analyze the particle system by relying on a mean-field approximation of the dynamics. We will illustrate in this section how to \gb{formally} derive such approximation and present the main theoretical result regarding the convergence of the \gb{mean-field} particle system towards a solution to \eqref{eq:pb}, in case of no selection mechanism.
Next, we will study the impact of the random selection strategy on the convergence properties of the algorithm. Technical details are left to Appendix \ref{app:convergence}.

\subsection{Mean-field approximation}
\label{s:4.1}

First, we note that a simple update rule for the personal bests $y^k_i$ is given by
\be
y^{k+1}_i = y^{k}_i +  \frac12\left(x_i^{k+1} - y_i^k \right)S(x_i^{k+1},y_i^k)\,, \quad \textup{with} \quad S(x,y) = 1 + \textup{sign}\left( \F(y) - \F(x) \right)\,.
\label{eq:updatey}
\ee
As in \cite{grassi2021from}, we approximate it for $\beta \gg 1$ as
\be
y^{k+1}_i = y^{k}_i +  \frac\nu2\left(x_i^{k+1} - y_i^k \right)S^\beta(x_i^{k+1},y_i^k)\,,
\label{eq:updateybeta}
\ee
with $S^\beta(x,y)$ being a continuous approximation of $S(x,y)$ as $\beta \to \infty$. By choosing $\nu=1$ we get \eqref{eq:updatey} with the only difference of having $S^\beta$ instead of $S$. As for $\bar y^{\alpha,k}$ with respect to $\bar y^{\infty,k}$, this is needed to make the update rule easier to handle mathematically, but it does have an impact on the performance for large values of $\beta$. We note that alternative ways of modeling the memory mechanisms have been suggested in the literature of PSO, see, for instance, \cite{mehdi2022pso} where fractional order calculus is used.

With the aim of deriving a continuous-in-time reformulation of \gb{the particle update rules} \eqref{eq:updateCBOME} and \eqref{eq:updateybeta}, we introduce a single parameter $\Delta t>0$ which controls the step length of all involved update mechanisms. By performing the rescaling
\[\lambda \gets \lambda \Delta t\,, \quad  \sigma \gets \sigma \sqrt{\Delta t}\,, \quad \nu \gets \nu \Delta t \]
\gb{we} get the update rules
\be
\begin{cases}
x^{k+1}_i &=  x_i^{k} +  \lambda\Delta t \left( \bar y^{\alpha,k} - x_i^k \right) + \sigma \sqrt{\Delta t} \left( \bar y^{\alpha,k} - x_i^k\right) \otimes \theta^k_i  \\
y^{k+1}_i &=   y^{k}_i +  (\nu\Delta t /2)\left(x_i^{k+1} - y_i^k \right)S^\beta(x_i^{k+1},y_i^k)\, 
\end{cases}
\label{eq:CBOME2}
\ee
which differ from the original formulation \eqref{eq:updateCBOME}, \eqref{eq:updatey} only due to the use of $S^\beta$ instead of $S$. 

As already noted in \cite{grassi2021from}, the iterative process \eqref{eq:CBOME2} corresponds to an Euler-Maruyama scheme applied to a system of Stochastic Differential Equations (SDEs). Indeed, \eqref{eq:CBOME2} corresponds to a discretization of the system
\begin{equation}
\begin{cases}
d X_t^i & = \lambda\left(\bar y^\alpha(\bar \rho_t^N) -  X_t^i\right)\,dt + \sigma \left(\bar y^\alpha( \bar \rho_t^N) -  X_t^i\right)\otimes\,d B_t^i \\
d Y^i_t &= \nu ( X_t^i -  Y_t^i)S^\beta( X_t^i,  Y_t^i)\,dt 
\end{cases}
\label{eq:sde}
\end{equation}
where, for convenience, we underlined above the dependence of the consensus point on the empirical distribution $\bar  \rho_t^N = \sum_i \delta_{Y^i_t}$ ($\delta_y$ being the Dirac measure at $y\in \RR^\d$) by using
\be 
\bar y^\alpha(\rho) := \frac{\int y e^{-\alpha \mathcal{F}(y)} d\rho(y)}{ \int e^{-\alpha \mathcal{F}(y)} d\rho(y)}\,,
\ee
defined for any Borel probability measure $\rho$  over $\RR^\d$ ($\rho \in \mathcal{P}(\RR^\d)$). In this way, we generalized the definition introduced in \eqref{eq:yalpha} to any $\rho \in \mathcal{P}(\RR^\d)$, provided the above integrals exists. 
In \eqref{eq:sde}, the random component of the dynamics is now described by $N$ independent Wiener processes $( B_t^i)_{t>0}$. 
As before, we supplement the system with initial conditions  $ X_0^i \sim \rho_0, Y_0^i = X_0^i$ for some $\rho_0 \in \mathcal{P}(\RR^\d)$.

The continuous-in-time description \eqref{eq:sde} already simplifies the analytical analysis of the optimization algorithm, but still pays the price of a possible large number $\mathcal{O}(N)$ of equations. This issue is typically addressed by assuming that for large populations $N$, particles become indistinguishable from one another and start behaving, in some sense, as a unique system. More precisely, let $F^N(t) \in \mathcal{P}(\RR^{(2\d)N})$ denote the joint probability distribution of $N$ tuples $(X_t^i, Y_t^i)$. We assume \textit{propagation of chaos} \cite{sznitman1991chaos} for large $N \gg1$, that is, we assume that the joint probability distribution decomposes as $F^N(t) = f(t)^{\otimes N}$ for some $f(t) \in \mathcal{P}(\RR^{2\d})$. System \eqref{eq:sde} becomes independent on the index $i$ and hence every particle moves according to the mono-particle process
\be
\begin{cases}
d\bar X_t & = \lambda(\bar y^\alpha(\bar \rho_t) -\bar X_t)\,dt + \sigma\, (\bar y^\alpha(\bar \rho_t) - \bar X_t)\otimes d\bar B_t\\
d\bar Y_t &= \nu (\bar X_t-\bar Y_t)S^\beta(\bar X_t,\bar Y_t)\,dt
\end{cases}
\label{eq:mfsde}
\ee
where $\bar \rho_t = \textup{Law}(\bar Y_t)$.

Assume $(\bar X_t, \bar Y_t)$ are initially distributed according to $f_0 = \rho_0^{\otimes 2}$<. By applying It\^{o}'s formula we have that $f(t) = \textup{Law}(\bar X_t^i, \bar Y_t^i)$ satisfies \gb{in a weak sense}
\be
\partial_t f + \nabla_x \cdot \left( \lambda (\bar y^\alpha (\bar \rho) - x)f \right) + \nabla_y\cdot \left( \nu (x-y) S^\beta(x,y)  f \right) = 
\gb{\frac12 \sum_{\ell=1}^\d \partial_{x_\ell x_\ell}^2 \left (  \sigma^2  (\bar y^\alpha(\bar \rho) - x)_\ell^2 f \right) } 
\label{eq:mfpde}
\ee
with initial data $\lim_{t \to 0} f(t) = f_0$.  Dynamics \eqref{eq:mfsde}, or, equivalently, \eqref{eq:mfpde}, corresponds to the mean-field approximation of the particle system \eqref{eq:sde} as $N \to \infty$. We remark that the above derivation has only been possible thanks to the approximations $S \approx S^\beta$ and $\bar y^\infty \approx \bar y^\alpha$ for large $\alpha$ and $\beta$. Well-posedness of the system is also granted by such approximations, provided the objective function $\F$ satisfies the following assumptions (proof details are given in Appendix \ref{a:well}).
\gb{
\begin{assumption} 
\label{asm:Lipschitz}
The objective function $\F: \RR^\d \to \RR$ is bounded from below, $\inf \F > -\infty$, and there exist some constants  $L_\F, c_u, c_l, R_l>0$ such that
\[
|\F(x)-\F(x')| \leq L_\F\left(\|x\|_2 + \|x'\|_2 \right)\|x-x'\|_2 \qquad \text{for all}\quad x,x'\in\RR^\textup{d}\,,\]
and
\begin{align*}
\F(x)-\inf \F  &\leq c_u(1+\|x\|^2_2) \qquad  &\text{for all}\quad &x \in \RR^\textup{d}\,,\\
\F(x)-\inf \F &  \geq c_l \|x\|_2^2  \qquad  &\text{for all}\quad &x \in \RR^\textup{d}\, \|x\|_2 \geq R_l.
\end{align*}
\end{assumption}
}

\begin{proposition}[\gb{Existence of solution to \eqref{eq:mfsde}}]\gb{Assume $\F$ satisfies Assumption \ref{asm:Lipschitz}.} There exists \gb{a process} $(\bar X, \bar Y) \in C([0,T], \RR^\textup{d})$, $T>0$ satisfying \eqref{eq:sde} with initial conditions $(\bar X_0, \bar Y_0)$ \gb{where} $\bar X_0 \sim \rho_0 \in \mathcal{P}_4(\RR^{\textup{d}})$ and $\bar Y_0 = \bar X_0$.
\label{p:well}
\end{proposition}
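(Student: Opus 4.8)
The plan is to treat \eqref{eq:mfsde} as a McKean--Vlasov (nonlinear) SDE --- nonlinear through the dependence of $\bar y^\alpha(\bar\rho_t)$ on $\bar\rho_t=\textup{Law}(\bar Y_t)$ --- and to construct a solution by a fixed-point argument on the flow of laws of the memory variable, following the strategy developed for CBO in \cite{carrillo2018analytical,fornasier2021consensusbased} and for PSO with memory in \cite{grassi2021from}. \textbf{Decoupling.} Fix $T>0$ and, for a prescribed curve $u\in C([0,T],\mathcal{P}_4(\RR^\d))$ with $u_0=\rho_0$, freeze $u$ inside the consensus functional: the system becomes the \emph{linear} (non-McKean) SDE
\[
\diff X_t=\lambda(\bar y^\alpha(u_t)-X_t)\,\diff t+\sigma(\bar y^\alpha(u_t)-X_t)\otimes \diff B_t\,,\qquad \diff Y_t=\nu(X_t-Y_t)S^\beta(X_t,Y_t)\,\diff t\,,
\]
with $X_0\sim\rho_0$, $Y_0=X_0$. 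Since $S^\beta$ is bounded and locally Lipschitz by construction, the $Y$-drift $\nu(x-y)S^\beta(x,y)$ has at most linear growth and is locally Lipschitz, while --- using the coercivity part of Assumption \ref{asm:Lipschitz} --- the curve $t\mapsto\bar y^\alpha(u_t)$ is bounded and continuous, so the $X$-coefficients are affine in $X$; classical SDE theory then yields a unique strong solution on $[0,T]$ with finite moments up to order four. Define the fixed-point map $\mathcal{T}(u):=\big(t\mapsto\textup{Law}(Y_t)\big)$, whose fixed points are exactly the solutions of \eqref{eq:mfsde}.

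\textbf{A priori estimates and invariant set.} Applying It\^o's formula to $\|X_t\|_2^2,\|X_t\|_2^4,\|Y_t\|_2^2,\|Y_t\|_2^4$ and using Gr\"onwall, one shows that for $K$ large enough (depending only on $\rho_0$, $T$ and the parameters) $\mathcal{T}$ maps the set $\mathcal{C}_K$ of curves $u\in C([0,T],\mathcal{P}_4)$ with $u_0=\rho_0$, uniform fourth moment $\sup_t\int(1+\|y\|_2^4)\,\diff u_t\le K$, and uniform $W_2$-H\"older modulus $\le K$, into itself. The delicate point is the uniform-in-time bound on $\bar y^\alpha(u_t)$: estimating $\int e^{-\alpha\F}\,\diff u_t\ge e^{-\alpha\sup_{B_R}\F}\,u_t(B_R)$ in the denominator and splitting the numerator at $\|y\|_2=R_l$ using $\F(y)-\inf\F\ge c_l\|y\|_2^2$, one bounds $\|\bar y^\alpha(u_t)\|_2$ by the second moment of $u_t$, provided $u_t(B_R)$ stays uniformly positive --- which holds inside $\mathcal{C}_K$ by Markov's inequality applied to the uniform fourth-moment bound. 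This is precisely where Assumption \ref{asm:Lipschitz} enters.

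\textbf{Compactness, continuity, conclusion.} The set $\mathcal{C}_K$ is convex and, by Prokhorov's theorem (the uniform fourth moments give tightness and uniform integrability of the squares) together with Arzel\`a--Ascoli (the H\"older bound gives equicontinuity), compact in $C([0,T],\mathcal{P}_2)$ for the uniform-$W_2$ topology. Continuity of $\mathcal{T}$ on $\mathcal{C}_K$ follows from the stability estimate $\|\bar y^\alpha(\rho)-\bar y^\alpha(\rho')\|_2\le C\,W_2(\rho,\rho')$, valid on sets of uniformly bounded fourth moment (this uses the local-Lipschitz and quadratic-growth bounds on $\F$): if $u^{(n)}\to u$ in $C([0,T],\mathcal{P}_2)$ then $\bar y^\alpha(u^{(n)}_\cdot)\to\bar y^\alpha(u_\cdot)$ uniformly in $t$, and a Gr\"onwall estimate on $\mathbb{E}\big[\|X^{(n)}_t-X_t\|_2^2+\|Y^{(n)}_t-Y_t\|_2^2\big]$ yields $W_2$-convergence of the corresponding laws, hence of $\mathcal{T}(u^{(n)})$ to $\mathcal{T}(u)$. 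A Schauder-type fixed-point theorem (equivalently, a Banach contraction for short time via the same stability estimate, then concatenation using the uniform moment bounds) produces $u^\ast\in\mathcal{C}_K$ with $\mathcal{T}(u^\ast)=u^\ast$; the associated linear-SDE solution $(\bar X,\bar Y)$ then has $\textup{Law}(\bar Y_t)=u^\ast_t=\bar\rho_t$ and continuous paths, so it solves \eqref{eq:mfsde} on $[0,T]$. Since $T>0$ is arbitrary, a global solution with the stated initial data is obtained.

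\textbf{Main obstacle.} The crux is the tension between the two estimates above: the uniform-in-time control of the consensus point $\bar y^\alpha$ (requiring both the coercivity of $\F$ and a uniformly positive mass on a fixed ball, the latter forced by propagating the fourth moment) and the $W_2$-stability of $\rho\mapsto\bar y^\alpha(\rho)$ (requiring the local Lipschitz bound on $\F$ and the reason one must work in $\mathcal{P}_4$ rather than $\mathcal{P}_2$). By contrast the memory equation for $\bar Y$ is harmless, since $S^\beta$ is bounded and locally Lipschitz so its drift grows at most linearly; the remaining It\^o/Gr\"onwall computations for the moment and continuity bounds are routine and are deferred to Appendix \ref{app:convergence}.
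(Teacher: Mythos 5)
Your construction is sound and would prove the proposition, but it runs the fixed-point argument in a different space than the paper does. You decouple the McKean--Vlasov structure by freezing the whole curve of laws $u\in C([0,T],\mathcal{P}_4(\RR^\d))$ and look for a fixed point of $u\mapsto \textup{Law}(Y^u_\cdot)$ via Schauder on a convex, compact, invariant set $\mathcal{C}_K$; the paper instead freezes only the (finite-dimensional) consensus curve $\xi\in C([0,T],\RR^\d)$, defines $\mathcal{T}\xi:=\bar y^\alpha(\textup{Law}(Y^\xi_\cdot))$, and applies the Leray--Schauder theorem: compactness of $\mathcal{T}$ is immediate from the $1/2$-H\"older continuity of $t\mapsto \bar y^\alpha(\bar\rho(t))$ (via the $W_2$-stability estimate of Lemma \ref{l:yalpha} and bounded fourth moments) and the compact embedding $C^{0,\frac12}\hookrightarrow C$, while an a priori bound on the family $\{\xi=\tau\mathcal{T}\xi,\ \tau\in[0,1]\}$, obtained from the estimate $\|\bar y^\alpha(\rho)\|_2^2\lesssim 1+\int\|y\|_2^2\,d\rho$ of \cite[Lemma 3.3]{carrillo2018analytical}, replaces your invariant-set verification. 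Both routes rest on the same two pillars you identify (the $W_2$-stability of $\rho\mapsto\bar y^\alpha(\rho)$ on sets of bounded fourth moment, and the boundedness/local Lipschitz continuity of $S^\beta$ making the memory drift of linear growth, cf.\ Lemma \ref{l:sbeta}), and your Markov-inequality lower bound on $u_t(B_R)$ is an acceptable substitute for the cited consensus-point bound. What the paper's choice buys is that Leray--Schauder requires no self-map property, only an a priori estimate on actual (rescaled) fixed points; your Schauder route does require $\mathcal{T}(\mathcal{C}_K)\subseteq\mathcal{C}_K$, and with a time-independent constant $K$ the Gr\"onwall output bound on the fourth moments contains a term proportional to $K\,T e^{CT}$, so invariance as you state it only holds for small $T$ — you would need either time-weighted moment constraints (e.g.\ $\int\|y\|_2^4\,du_t\le Ke^{\Lambda t}$ with $\Lambda$ large) or the short-time-plus-concatenation variant you mention in parentheses; with that standard repair your argument goes through. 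What your route buys in exchange is a formulation that immediately records the full law of the memory variable, which is convenient if one wants uniqueness or continuation estimates at the level of $\bar\rho_t$ rather than of the consensus point alone.
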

Being mathematically tractable, we show next that the mean-field dynamics converges to a global solution to \eqref{eq:pb} \gb{if $\F, S^\beta$ satisfy suitable assumptions}. 


\subsection{Convergence in mean-field law}

We start by enunciating the necessary assumptions to the convergence result.
\begin{assumption} \label{ass:f}
The objective function $\F \in C(\RR^\textup{d},\RR)$, satisfies:
\begin{enumerate}[label=A\arabic*,labelsep=10pt,leftmargin=35pt]
	\item\label{asm:zero_global}
		there exists uniquely $x^*\in \RR^\textup{d}$ solution to \eqref{eq:pb}; 
	\item\label{asm:icp}
		there exist $\eta, R_0>0$ and $\gamma\in\gb{(0,\infty)}$ such that
		\begin{align*}
		\F(x)-\inf \F &\geq \eta\, \|x - x^*\|^{\gamma}_\infty \quad & \forall x \in \RR^{\textup{d}}\,, \;&\|x - x^*\|_{\infty} \leq R_0 \\
		\F(x)-\inf \F& \geq \eta\, R_0^{\gamma} \quad &\forall x \in \RR^\textup{d}\,, \;&\|x-x^*\|_{\infty} > R_0\,.
		\end{align*}
		\item \label{asm:conv}
		$\F$ is convex in a (possibly small) neighborhood $\{x\in \RR^\textup{d}: \|x - x^*\|_{\infty} \leq R_1\}$ of $x^*$ for some $R_1 < R_0$. 

\end{enumerate}
\end{assumption}

\begin{assumption}[Assumptions on $S^\beta$] \label{ass:S}
The function $S^\beta \in C(\RR^{2\textup{d}}, [0,2])$, with $\beta>0$ 

\begin{enumerate}[label=A\arabic*,labelsep=10pt,leftmargin=35pt]
\setcounter{enumi}{3}
\item\label{asm:psi}
has the following structure
\be
S^\beta(x ,y) = 2\psi\left(\beta (\mathcal{F}(y) - \mathcal{F}(x))\right), 
\label{eq:regS}
\ee
with $\psi \in C^{1}(\RR,[0,1])$ being a \gb{non-decreasing function} with Lipschitz constant $L_{\psi}=1$.
\item\label{asm:zero} The value $S^\beta(x,y)$ is positive only when $x$ is strictly better than $y$ in terms of objective value $\F$:
\be \notag
S^\beta(x,y) 
\begin{cases}
 \geq 0 & \text{if}\quad  \mathcal{F}(x) < \mathcal{F}(y) \\
= 0 &\textup{else}\,.
\end{cases}
\ee
\end{enumerate}
\end{assumption}

Assuming uniqueness of global minimum is a typical assumption for analysis of CBO methods \cite{fornasier2021consensusbased,fornasier2022aniso} and it is due to the definition of the consensus point $\bar y^\alpha$ (or $\bar x^\alpha$ in the case without memory mechanism). Indeed, in presence of two global minima, $\bar y^\alpha$ may be placed between them, no matter how large $\alpha$ is. Assumption \ref{asm:zero_global} ensure to avoid such situations. Furthermore, \ref{asm:icp} also allows to give quantitative estimates on the difference between the global minimum and eventual local minima. In the literature, such property is known as $\textit{conditioning}$ \cite{rosasco2017geometry}. Requirements \ref{asm:conv} and \ref{asm:zero} \gb{will be needed to} ensure that if a personal best $y_i^k$ enters such small neighborhood where $\F$ is convex, it will not leave it for the rest of the computation. 
For an intuition of \ref{asm:icp} and \ref{asm:conv} we refer to Figure \ref{fig:illu}, where the Rastrigin function is considered.

\begin{figure}
\centering
\includegraphics[width = 0.45\linewidth]{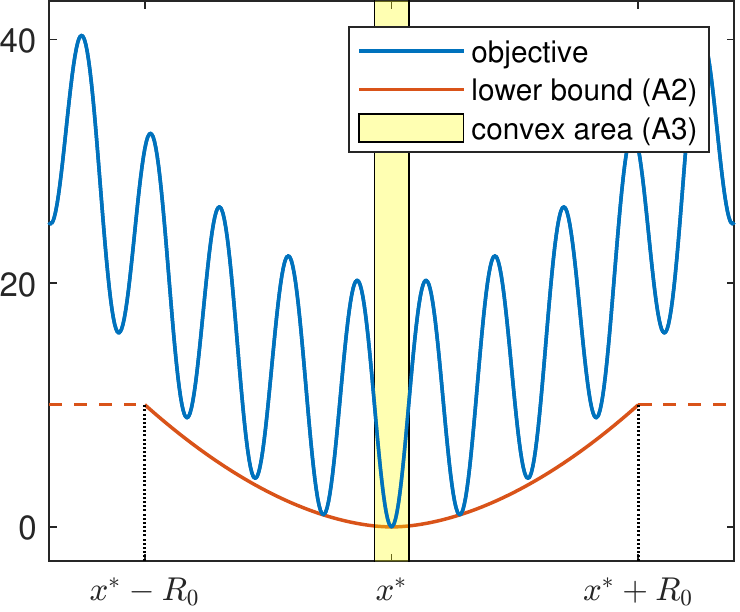}
\caption{Assumptions \ref{ass:f} illustrated for Rastrigin function. For example, such objective function satisfies \ref{asm:icp} with $\eta = 1$, $\gamma = 1.8$, $R_0 = 1.42$ and \ref{asm:conv} with $R_1 = 0.25.$}
\label{fig:illu}
\end{figure}

\begin{theorem}[Convergence in mean-field law] \label{t:main}
Assume $\F$ satisfies Assumption \ref{ass:f} and $S^\beta$ satisfies Assumption \ref{ass:S} for some $\beta>0$ fixed. Let $(\bar X_t, \bar Y_t)_{t\geq0}$ be a solution to \eqref{eq:mfsde} for $t \in [0,T^*]$, with initial data
 $\bar X_0 \sim \rho_0 \in \mathcal{P}_4(\RR^{\textup{d}}), \bar Y_0 = \bar X_0$ such that $x^*\in \textup{supp}(\rho_0)$ .

Fix an accuracy $\varepsilon>0$. If $2\lambda > \sigma^2$, the expected $\ell_2$-error satisfies
\be
\min_{t \in [0,T^*]}\mathbb{E}\left[ \| \bar X_{t} - x^*\|_2^2 \right] \leq  \varepsilon 
\label{eq:errMF}
\ee
provided $T^*, \alpha>0$ are large enough.
\end{theorem}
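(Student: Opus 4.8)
The plan is to follow the now-standard variance-contraction strategy from the CBO literature (as in \cite{fornasier2021consensusbased,fornasier2022aniso}), adapted to the memory dynamics \eqref{eq:mfsde}. The central object is the Lyapunov functional $V(t) = \tfrac12\mathbb{E}[\|\bar X_t - x^*\|_2^2]$, and, because the drift is organized around $\bar y^\alpha(\bar\rho_t)$ rather than $x^*$ directly, a second quantity tracking the quality of the consensus point, e.g. $\mathbb{E}[\|\bar X_t - \bar y^\alpha(\bar\rho_t)\|_2^2]$ and the running minimum $\mathcal{E}_t := \inf\{\F(\bar Y_s): s\le t\}$ attained by the personal-best process. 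First I would apply It\^o's formula to $\|\bar X_t - x^*\|_2^2$ using \eqref{eq:mfsde}: the drift term gives $-2\lambda\mathbb{E}[\langle \bar X_t - x^*, \bar X_t - \bar y^\alpha(\bar\rho_t)\rangle]$ and the anisotropic diffusion contributes $+\sigma^2\sum_\ell \mathbb{E}[(\bar y^\alpha(\bar\rho_t) - \bar X_t)_\ell^2] = \sigma^2\mathbb{E}[\|\bar y^\alpha(\bar\rho_t) - \bar X_t\|_2^2]$. Completing the square / using $2\langle a-x^*, a-b\rangle = \|a-x^*\|_2^2 + \|a-b\|_2^2 - \|b-x^*\|_2^2$ with $a = \bar X_t$, $b = \bar y^\alpha(\bar\rho_t)$, one obtains a differential inequality of the shape
\be \notag
\frac{d}{dt} V(t) \leq -(2\lambda - \sigma^2)\,\mathbb{E}\!\left[\|\bar X_t - \bar y^\alpha(\bar\rho_t)\|_2^2\right] - \lambda V(t) + \lambda\,\mathbb{E}\!\left[\|\bar y^\alpha(\bar\rho_t) - x^*\|_2^2\right],
\ee
so the condition $2\lambda > \sigma^2$ is exactly what makes the first term dissipative and lets us discard it.

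The crux is then to control the last term, $\mathbb{E}[\|\bar y^\alpha(\bar\rho_t) - x^*\|_2^2]$, i.e.\ to show the weighted average of the personal bests is close to the minimizer. Here I would invoke the quantitative Laplace principle: under \ref{asm:icp} (the conditioning/inverse-continuity property), for any probability measure $\rho$ with $x^*\in\textup{supp}(\rho)$ one has $\|\bar y^\alpha(\rho) - x^*\|_2 \leq C(\rho,\alpha)$ with $C\to 0$ as $\alpha\to\infty$, quantified in terms of $\rho(\{\|y-x^*\|_\infty \le r\})$ for small $r$ and the excess energy $\F - \inf\F$ on that ball. The new ingredient relative to memoryless CBO is that $\bar\rho_t = \textup{Law}(\bar Y_t)$ evolves through \eqref{eq:updateybeta}: by \ref{asm:zero}, $\bar Y_t$ only moves toward positions of strictly smaller objective value, so $\F(\bar Y_t)$ is monotone non-increasing along trajectories, and combined with \ref{asm:conv}/\ref{asm:zero} (personal bests entering the convex neighborhood $\{\|x-x^*\|_\infty \le R_1\}$ never leave), one can guarantee the mass of $\bar\rho_t$ near $x^*$ does not decay — in fact it only increases — so the Laplace estimate stays uniform in $t$. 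I would therefore establish a lower bound $\bar\rho_t(\{\|y-x^*\|_\infty \le r\}) \ge \text{(something)} \ge \bar\rho_0(\cdot) =: q_r > 0$ (nonzero since $x^*\in\textup{supp}(\rho_0)$), feeding back a bound $\mathbb{E}[\|\bar y^\alpha(\bar\rho_t) - x^*\|_2^2] \le \varepsilon_\alpha + (\text{tail terms controlled by 4th moments})$, with $\varepsilon_\alpha\to 0$ as $\alpha\to\infty$.

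Combining, Grönwall's inequality on $\frac{d}{dt}V \le -\lambda V + \lambda\varepsilon_\alpha$ yields $V(t) \le e^{-\lambda t}V(0) + \varepsilon_\alpha$, so choosing $\alpha$ large enough that $\varepsilon_\alpha \le \varepsilon/4$ and $T^*$ large enough that $e^{-\lambda T^*}V(0) \le \varepsilon/4$ gives $\min_{t\in[0,T^*]} \mathbb{E}[\|\bar X_t - x^*\|_2^2] = 2\min_t V(t) \le \varepsilon$, as claimed. Two technical points need care and I expect them to be the main obstacles: first, boundedness/propagation of the fourth moments $\mathbb{E}[\|\bar X_t\|_2^4], \mathbb{E}[\|\bar Y_t\|_2^4]$ uniformly on $[0,T^*]$ (needed both for the well-posedness invoked from Proposition~\ref{p:well} and to control the tail contributions in the Laplace estimate) — this should follow from Assumption~\ref{asm:Lipschitz} and Gr\"onwall, since the drift is affine in $\bar X$ with coefficients bounded via the growth bounds on $\F$; second, and more delicate, making rigorous the claim that the near-$x^*$ mass of $\bar\rho_t$ is non-decreasing: this requires tracking the flow generated by $\nu(\bar X_t - \bar Y_t)S^\beta(\bar X_t,\bar Y_t)$ and using \ref{asm:conv} together with the sign structure \ref{asm:zero} to argue $\{\|\bar Y_t - x^*\|_\infty \le R_1\}$ is (almost surely) forward-invariant, then transferring this to a lower bound on $\bar\rho_t$ of the relevant ball. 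The full details of both are deferred to Appendix~\ref{app:convergence}.
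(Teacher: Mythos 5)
Your overall architecture is the one the paper uses: a Lyapunov functional $\mathcal V(t)=\tfrac12\mathbb{E}[\|\bar X_t-x^*\|_2^2]$, a differential inequality from It\^o's formula, the quantitative Laplace principle (Proposition \ref{p:laplace}), a time-uniform lower bound on the mass of $\bar\rho_t$ near $x^*$ obtained from forward invariance of a convex sublevel set (via \ref{asm:conv} and \ref{asm:zero}, which is exactly Lemma \ref{l:mass}), control of $\mathbb{E}\|\bar Y_t-x^*\|_2$ on the finite horizon, and finally a choice of $T^*$ first and $\alpha$ second. However, your central differential inequality is wrong, and the error is not cosmetic. Carrying out your own computation (drift $\lambda\mathbb{E}\langle \bar X_t-x^*,\bar y^\alpha(\bar\rho_t)-\bar X_t\rangle$, diffusion $\tfrac{\sigma^2}{2}\mathbb{E}\|\bar y^\alpha(\bar\rho_t)-\bar X_t\|_2^2$, then the polarization identity) gives
\begin{equation*}
\frac{d}{dt}\mathcal V(t) \;=\; -\lambda\,\mathcal V(t)\;-\;\frac{\lambda-\sigma^2}{2}\,\mathbb{E}\big[\|\bar X_t-\bar y^\alpha(\bar\rho_t)\|_2^2\big]\;+\;\frac{\lambda}{2}\,\|\bar y^\alpha(\bar\rho_t)-x^*\|_2^2\,,
\end{equation*}
so the coefficient of the spread term is $(\lambda-\sigma^2)/2$, not $2\lambda-\sigma^2$, and discarding it requires $\lambda\geq\sigma^2$ --- strictly stronger than the hypothesis $2\lambda>\sigma^2$ of Theorem \ref{t:main}. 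In the regime $\lambda<\sigma^2<2\lambda$ that term has the wrong sign and your Gr\"onwall step $\frac{d}{dt}V\le-\lambda V+\lambda\varepsilon_\alpha$ breaks down; moreover even when $\lambda\geq\sigma^2$ you only retain decay at rate $\lambda$, not the full $2\lambda-\sigma^2$.

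The paper avoids this by expanding around $x^*$ rather than around $\bar y^\alpha$: writing $\bar y^\alpha(\bar\rho_t)-\bar X_t=(\bar y^\alpha(\bar\rho_t)-x^*)-(\bar X_t-x^*)$ yields Proposition \ref{p:errevolution}, which keeps the full dissipation $-(2\lambda-\sigma^2)\mathcal V$ at the price of a cross term $\sqrt{2}(\lambda+\sigma^2)\sqrt{\mathcal V}\,\|\bar y^\alpha(\bar\rho_t)-x^*\|_2$, and then runs a bootstrap/contradiction argument: exponential decay at rate $\tfrac12(2\lambda-\sigma^2)$ holds as long as $\|\bar y^\alpha(\bar\rho_t)-x^*\|_2\le c\sqrt{\mathbb{E}\|\bar X_t-x^*\|_2^2}$, and the exit scenario is excluded for $\alpha$ large using Proposition \ref{p:laplace}, Lemma \ref{l:mass} and the bound $\mathbb{E}\|\bar Y_t-x^*\|_2^2\le 2e^{\nu t}\sup_{s\le t}\mathbb{E}\|\bar X_s-x^*\|_2^2$; this last estimate (an elementary Gr\"onwall for the $\bar Y$-equation) is also what replaces your appeal to fourth-moment tail control. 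One further small inaccuracy: $\F(\bar Y_t)$ is not monotone along trajectories (the relaxed dynamics \eqref{eq:mfsde} moves $\bar Y_t$ along a segment toward $\bar X_t$, on which $\F$ need not decrease), and the paper does not need this; what Lemma \ref{l:mass} actually proves, via a normal-cone argument, is that the convex sublevel set $L_\delta=\{\F\le\inf\F+\delta\}$ with $\delta=\eta\min\{R_1,r\}^\gamma$ (contained in $B^\infty_r(x^*)$ by \ref{asm:icp}) is forward invariant, so its mass is non-decreasing. With the corrected differential inequality and the bootstrap, your plan coincides with the paper's proof.
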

We refer to Appendix \ref{app:convergence} for a proof.

\begin{remark}
The mean-field mono-particle process \eqref{eq:mfsde} aims to approximate the algorithm iterative dynamics \eqref{eq:CBOME2} for small time steps $\Delta t\ll 1$ and large particle populations $N \gg 1$. Therefore, convergence of the algorithm dynamics towards the global solution $x^*$ can be proven by coupling Theorem \ref{t:main} with error estimates of such approximation. 

For instance, assuming that all considered dynamics take place on a bounded set $\mathcal{D}$ ensures that the error introduced by the continuous-in-time \gb{approximation} will be of order $\Delta t$ thanks to classical results on Euler-Maruyama  schemes \cite{platen1999intro}. Likewise, considering a bounded dynamics allows to prove that the error introduced by the mean-field approximation is of order $N^{-1}$ (see e.g. \cite[Theorem 3.1]{fornasier2020hypersurfaces}, \cite[Proposition 16]{fornasier2021consensusbased}). \gb{If such error rate holds, consider $\{(x^k_i, y^k_i) \}_{i=1}^N$ be given by \eqref{eq:CBOME2}, $\{( X_t^i, Y_t^i) \}_{i=1}^N$ be a solution \eqref{eq:sde} and $\{(\bar X_t^i, \bar Y_t^i) \}_{i=1}^N$ be $N$-copies of a solution to \eqref{eq:mfsde}. Let $t^*\in[0,T^*]$ being a time minimizing the mean-field error in \eqref{eq:errMF}. Altogether, one obtains the following error decomposition for $k = \lfloor t^*/\Delta t\rfloor$, 
\be \notag
\begin{split}
\mathbb{E}\left [ \frac1N \sum_{i=1}^N \|x^{k}_i - x^* \|_2^2 \right] & 
\leq C \Bigg(
 \mathbb{E}\left [ \frac1N \sum_{i=1}^N \|x^{k}_i - X_{t^*}^i \|_2^2 \right] +
 \mathbb{E}\left [ \frac1N \sum_{i=1}^N \|X_{t^*}^i  - \bar X_{t^*}^i  \|_2^2 \right]
 \\
 & \qquad +
 \mathbb{E}\left [ \frac1N \sum_{i=1}^N \|\bar X_{t^*}^i  - x^* \|_2^2 \right]
 \Bigg) \\
 & \leq C_{\textup{EM}}\Delta t  + C_{\textup{MFA}} N^{-1} + \varepsilon
\end{split}
\ee
where $C, C_{\textup{EM}},C_{\textup{MFA}}$ are positive constant independent on $N, \Delta t$.}
\end{remark}

\subsection{Random selection analysis}
\label{s:reductionanalysis}
In this section, we analytically investigate the impact of randomly discarding particles during the computation. We are particularly interested in tracking the distance between 
a particle system \gb{$\{ x_i^k,  y_i^k \}_{i=1}^{N_0}$} evolving according to \eqref{eq:CBOME2} where no particles are discarded, and a second system $\{\hat x_i^k, \hat y_i^k \}_{I_k}, |I_k| = N_k$  where $N_k - N_{k+1}$ particles are discarded after update rule \eqref{eq:CBOME2}. Clearly, we \gb{require} that $N_{k+1} \leq N_k$ and $I_{k+1} \subseteq I_k \subseteq I_0 = \{1, \dots,N_0\}$ for all $k$. Similarly to the analysis carried out in \cite{ha2021timediscrete,jin2020convergence}, we restrict to the simpler dynamics where, at every step $k$, the random variables $\theta^k_i$ and $\hat \theta^k_i$ used to generate such systems are the same for all particles: 
\be
\theta_i^k = \hat\theta^k_j = \theta^k \sim \mathcal{N}(0, \mathbf{I}_{\d}) \quad \textup{for all}\quad i \in I_k,\, j \in I_0.
\label{eq:asm_red}
\ee

To compare particle systems with a different number of particles, we rely on their representation as empirical probability measures and the notion of 2-Wasserstein distance. For $\{\hat x_i^k\}_{i\in I_k}$ and $\{ x_i^k \}_{i =1}^{N_0}$ we consider, respectively, the following probability measures
\be
\rho_{N_k}^k := \frac1{N_k} \sum_{i\in I_k} \delta_{\hat x_i^k} \quad\textup{and} \quad 
\rho_{N_0}^k : = \frac1{N_0} \sum_{i \in I_0}\delta_{ x_i^k}\,.
\label{eq:emp_red}
\ee
Informally, the 2-Wasserstein distance $W_2(\rho_{N_k}^k, \rho_{N_0}^k)$ quantifies the minimal effort needed to move the mass from distribution $\rho_{N_k}^k$ into $\rho_{N_0}^k$  \cite{santambrogio2015optimal}. Let $w_{ij}$ denote the amount of mass leaving particle $x^k_i$ and going into $\hat x^k_i$: the cost of such movement is assumed to be given by $w_{ij}\| x^k_i - \hat x^k_j\|_2^2$. Therefore, if we indicate the set of all admissible couplings between the two discrete probability measures as
\be
\Gamma(\rho_{N_k}^k, \rho_{N_0}^k ) = \left\{ w \in \RR^{N_0 \times N_k}\,:\, 
\sum_{j=1}^{N_k} w_{ij} = \frac{1}{N_0},\,   \sum_{i=1}^{N_0} w_{ij} = \frac{1}{N_k}, \, w_{ij} \geq 0,\, \forall\, i,j  \right \}\,,
\label{eq:wasscoupling}
\ee
the 2- Wasserstein distance is defined as 
\be
W_2(\rho_{N_k}^k, \rho_{N_0}^k) :=  \min_{w \in \Gamma (\rho_{N_k}^k, \rho_{N_0}^k)}\left( \sum_{i,j} w_{ij} \| x_i^k - \hat x_j^k\|_2^2 \right)^{\frac12}
\label{eq:wassemp}
\ee
see, for instance, \cite[Section 6.4.1]{santambrogio2015optimal}.

Before providing estimates on \eqref{eq:wasscoupling}, let us present a more general result on the impact that the random selection strategy has on an arbitrary particle distribution.

\begin{proposition}[Stability of random selection procedure] Let $\mathbf{z} = \{z_i \}_{i \in I}, |I| = N$ be an ensemble of particles and $\{z_i\}_{j \in I_{\red}}$ with $I_{\textup{sel}} \subseteq I, |I| = N_{\textup{sel}}$ a random sub-set of such ensemble. Consider the associated empirical distributions $\mu_N$ and $\mu_{N_\textup{sel}}$ (defined consistently to \eqref{eq:emp_red}). 

It holds
\be
\mathbb{E} \left[W_2^2(\mu_N, \mu_{N_\textup{sel}})\right] \leq 2\,\var (\mathbf{z})\, \frac{N - N_{\textup{sel}}}{N-1}\,,
\label{eq:myestMC}
\ee
where the expectation is taken with respect to the random selection of $I_{\textup{sel}}$.
\label{p:reduction}
\end{proposition}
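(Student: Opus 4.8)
The plan is to construct an explicit (sub-optimal but good enough) coupling $w \in \Gamma(\mu_N, \mu_{N_{\textup{sel}}})$ and bound the transport cost $\sum_{i,j} w_{ij}\|z_i - z_j\|_2^2$ from above, then take expectation over the random choice of $I_{\textup{sel}}$. Since $I_{\textup{sel}} \subseteq I$, the natural coupling keeps the mass of every selected particle $z_j$, $j \in I_{\textup{sel}}$, where it is — i.e.\ assign mass $1/N$ from $z_j$ in $\mu_N$ to $z_j$ in $\mu_{N_{\textup{sel}}}$ (this costs nothing) — and redistributes the remaining mass $(N - N_{\textup{sel}})/(N N_{\textup{sel}})$ at each selected particle, collected from the non-selected particles $z_i$, $i \in I \setminus I_{\textup{sel}}$, as uniformly as possible. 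One clean choice is the product coupling on the leftover mass: send mass $\tfrac{1}{N}\cdot\tfrac{N - N_{\textup{sel}}}{N N_{\textup{sel}}}\cdot\tfrac{N}{N - N_{\textup{sel}}} = \tfrac{1}{N N_{\textup{sel}}}$ from each non-selected $z_i$ to each selected $z_j$ — one checks the marginals are exactly $1/N$ on the $\mu_N$ side and $1/N_{\textup{sel}}$ on the $\mu_{N_{\textup{sel}}}$ side. This gives the bound
\[
W_2^2(\mu_N,\mu_{N_{\textup{sel}}}) \;\leq\; \frac{1}{N N_{\textup{sel}}} \sum_{i \in I\setminus I_{\textup{sel}}} \sum_{j \in I_{\textup{sel}}} \|z_i - z_j\|_2^2 \, .
\]

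Next I would take the expectation over the uniformly random subset $I_{\textup{sel}}$ of size $N_{\textup{sel}}$. By symmetry, for any ordered pair $i \neq j$ the probability that $i \notin I_{\textup{sel}}$ and $j \in I_{\textup{sel}}$ equals $\tfrac{N - N_{\textup{sel}}}{N}\cdot\tfrac{N_{\textup{sel}}}{N-1}$. Hence
\[
\mathbb{E}\big[W_2^2(\mu_N,\mu_{N_{\textup{sel}}})\big] \;\leq\; \frac{1}{N N_{\textup{sel}}}\cdot \frac{(N - N_{\textup{sel}})N_{\textup{sel}}}{N(N-1)} \sum_{i \neq j} \|z_i - z_j\|_2^2 \;=\; \frac{N - N_{\textup{sel}}}{N^2(N-1)} \sum_{i,j} \|z_i - z_j\|_2^2 \, ,
\]
where in the last sum the diagonal terms vanish so we may sum over all $i,j$. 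The final step is the standard identity relating the sum of squared pairwise distances to the variance: $\sum_{i,j\in I} \|z_i - z_j\|_2^2 = 2N \sum_{i\in I}\|z_i - \textup{m}(\mathbf{z})\|_2^2 = 2N^2 \var(\mathbf{z})$, which follows by expanding $\|z_i - z_j\|_2^2 = \|z_i - \textup{m}\|_2^2 + \|z_j - \textup{m}\|_2^2 - 2\langle z_i - \textup{m}, z_j - \textup{m}\rangle$ and noting $\sum_i (z_i - \textup{m}) = 0$. Substituting yields $\mathbb{E}[W_2^2(\mu_N,\mu_{N_{\textup{sel}}})] \leq 2\var(\mathbf{z})\,\tfrac{N - N_{\textup{sel}}}{N-1}$, as claimed.

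The only genuinely delicate point is verifying that the proposed $w$ is a valid element of $\Gamma(\mu_N, \mu_{N_{\textup{sel}}})$ — i.e.\ checking both sets of marginal constraints and nonnegativity — and correctly accounting for the degenerate edge cases ($N_{\textup{sel}} = N$, where the bound is trivially $0$, and $N_{\textup{sel}} = 1$). Everything else is elementary: the combinatorial probability computation is a one-line symmetry argument, and the variance identity is textbook. I would present the argument in exactly the three steps above (construct coupling; bound $W_2^2$ and take expectation; apply the variance identity), which keeps it short and self-contained.
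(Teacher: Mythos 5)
Your proposal is correct and follows essentially the same route as the paper's proof: the identical coupling (selected particles keep their mass, each discarded particle's mass spread uniformly as $1/(N\,N_{\textup{sel}})$ over the selected ones), the same symmetry computation giving the pair probability $\tfrac{(N-N_{\textup{sel}})N_{\textup{sel}}}{N(N-1)}$, and the same identity $\sum_{i,j}\|z_i-z_j\|_2^2 = 2N^2\,\var(\mathbf{z})$. No gaps to report.
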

The proof is provided Appendix \ref{s:proofreduction}. 
We note how the system variance $\var (\mathbf{z})$ enters the error estimate due to the randomness of the selection, similar to the Law of Large Number error for random variables. In particular, the smaller the particles variance is, the closer the reduced particle system will be to the original distribution. This justifies the choice of $N_{k+1}$ proposed in Section \ref{sec:selection} where we are allowed to discard particles only if the system shows a contractive behavior, see \eqref{eq:reduction}.

By iteratively applying Proposition \ref{p:reduction} and by using suitable stability estimates of dynamics \eqref{eq:CBOME2}, we are able to bound the error introduced by the random selection procedure as follows. Proof details are a given in Appendix \ref{s:proofreduction}.

\begin{theorem}  Let $\{ x_i^k,  y_i^k \}_{i=1}^{N_0}$ be constructed according to \eqref{eq:CBOME2} were particles are not discarded, and $\{\hat x_i^k, \hat y_i^k \}_{I_k}, |I_k| = N_k$  where $N_k - N_{k+1}$ particles are discarded after update rule \eqref{eq:CBOME2}. Assume \eqref{eq:asm_red} is satisfied and consider the probability measures \eqref{eq:emp_red}. 

\gb{Under Assumptions \ref{asm:Lipschitz} and \ref{ass:S}},  if $\{ x_i^k,  y_i^k \}_{i=1}^{N_0}, \{\hat x_i^k, \hat y_i^k \}_{i \in I_k} \subset B_M(0)$ at all step $k$ for some $M>0$, it holds
\be
\mathbb{E} \left[W_2^2\left(\rho_{N_k}^k, \rho_{N_0}^k\right) \right] \leq C\, \max_{h = 1, \dots, k} \textup{var}\left(\tilde{\mathbf{z}}^h\right)\; \frac{N_0 - N_k}{N_k -1}
\ee
where $C = C(\Delta t, \lambda, \sigma, \nu, \beta, \alpha, k, L_\F, M)$ and $ \tilde{\mathbf{z}}^{h} = \{ (\hat x_i^h, \hat y_i^h) \}_{i \in I_{h-1}}$ describes the particle system just before the random selection procedure at step $h \leq k$. 
The expectation is taken with respect to the sampling of $\{ \theta^h\}_{h=1}^k$ and with respect to the selection procedure.
\label{t:reduction}
\end{theorem}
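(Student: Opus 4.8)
The plan is to run a discrete Grönwall-type argument over the iteration index $k$, where at each step we account separately for (i) the error already present in the two systems at step $k$, amplified by one application of the update rule \eqref{eq:CBOME2}, and (ii) the fresh error injected by the random selection procedure at step $k$, which is controlled by Proposition~\ref{p:reduction}. Concretely, fix a near-optimal coupling $w^k\in\Gamma(\rho_{N_k}^k,\rho_{N_0}^k)$ realizing $W_2^2(\rho_{N_k}^k,\rho_{N_0}^k)$, and also track an auxiliary coupling on the $y$-variables. Since assumption \eqref{eq:asm_red} forces the \emph{same} Gaussian vector $\theta^k$ to drive every particle of both systems, the map $(x,y)\mapsto(x',y')$ defined by one step of \eqref{eq:CBOME2} is, conditionally on $\theta^k$ and on the current consensus points, a deterministic map; the first step is to show this map is Lipschitz (in expectation over $\theta^k$) with a constant $L_{\text{step}} = L_{\text{step}}(\Delta t,\lambda,\sigma,\nu,\beta,L_\F,M)$. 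This uses: boundedness of the systems in $B_M(0)$; the Lipschitz bound on $\F$ from Assumption~\ref{asm:Lipschitz} (restricted to $B_M(0)$ it gives a plain Lipschitz constant $2L_\F M$); the structure \eqref{eq:regS} of $S^\beta$ with $L_\psi=1$, which makes $y\mapsto y + (\nu\Delta t/2)(x-y)S^\beta(x,y)$ Lipschitz with a constant depending on $\beta$; and $\mathbb{E}\|\theta^k\|^2 = \d$ to handle the diffusion term. One also needs the stability of the consensus map $\rho\mapsto \bar y^\alpha(\rho)$ in $W_2$ on measures supported in $B_M(0)$: this is the standard estimate (as in \cite{fornasier2021consensusbased,carrillo2018analytical}) giving $\|\bar y^\alpha(\mu)-\bar y^\alpha(\nu)\|_2 \le C(\alpha,L_\F,M)\,W_2(\mu,\nu)$, and it is here that the $\alpha$-dependence of $C$ enters.

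The second step is the recursion itself. Let $\tilde{\mathbf{x}}^{k+1}=\{\hat x_i^{k+1},\hat y_i^{k+1}\}_{i\in I_k}$ be the reduced system \emph{before} the selection at step $k+1$, and let $\tilde\rho^{k+1}_{N_k}$ be its empirical measure (mass $1/N_k$ on $N_k$ points). Applying the one-step Lipschitz estimate of Step~1 to the coupling that pushes $\rho_{N_k}^k$ to $\tilde\rho^{k+1}_{N_k}$ and $\rho_{N_0}^k$ to $\rho_{N_0}^{k+1}$ simultaneously (legitimate because the same $\theta^{k+1}$ and hence the same map acts on both — one has to be slightly careful because the consensus points differ between the two systems, but that discrepancy is itself $\le C(\alpha,L_\F,M)W_2(\bar\rho_{N_k}^k,\bar\rho_{N_0}^k)$ and gets absorbed into $L_{\text{step}}$), we obtain
\be \notag
\mathbb{E}\big[W_2^2(\tilde\rho^{k+1}_{N_k},\rho_{N_0}^{k+1})\big] \;\le\; L_{\text{step}}^2\,\mathbb{E}\big[W_2^2(\rho_{N_k}^k,\rho_{N_0}^k)\big]\,.
\ee
Then the triangle inequality for $W_2$ (in the form $W_2^2(a,c)\le (1+\epsilon)W_2^2(a,b)+(1+\epsilon^{-1})W_2^2(b,c)$, or simply $2W_2^2(a,b)+2W_2^2(b,c)$) together with Proposition~\ref{p:reduction} applied to $\tilde{\mathbf{z}}^{k+1}$ — which bounds $\mathbb{E}[W_2^2(\rho_{N_{k+1}}^{k+1},\tilde\rho^{k+1}_{N_k})]$ by $2\,\var(\tilde{\mathbf{z}}^{k+1})\,\frac{N_k-N_{k+1}}{N_k-1}$ — yields
\be \notag
\mathbb{E}\big[W_2^2(\rho_{N_{k+1}}^{k+1},\rho_{N_0}^{k+1})\big] \;\le\; c_1 L_{\text{step}}^2\,\mathbb{E}\big[W_2^2(\rho_{N_k}^k,\rho_{N_0}^k)\big] \;+\; c_2\,\var(\tilde{\mathbf{z}}^{k+1})\,\frac{N_k-N_{k+1}}{N_k-1}\,.
\ee
Here one should track carefully that the variance appears on the pre-selection ensemble $\tilde{\mathbf{z}}^{k+1}$, matching the statement; and that $\var$ of the $x$-marginal is bounded by $\var$ of the full $(x,y)$-ensemble. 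Unrolling this linear recursion from $k=0$ (where $W_2(\rho_{N_0}^0,\rho_{N_0}^0)=0$) gives a geometric sum $\sum_{h=1}^{k} (c_1L_{\text{step}}^2)^{k-h}\,c_2\,\var(\tilde{\mathbf{z}}^h)\frac{N_{h-1}-N_h}{N_{h-1}-1}$, which we bound by $\max_h \var(\tilde{\mathbf{z}}^h)$ times $C(\Delta t,\lambda,\sigma,\nu,\beta,\alpha,k,L_\F,M)$ times $\sum_h \frac{N_{h-1}-N_h}{N_{h-1}-1}$; finally, since the $N_h$ are non-increasing with $N_h\ge N_k$, one has $\sum_{h=1}^k \frac{N_{h-1}-N_h}{N_{h-1}-1}\le \frac{1}{N_k-1}\sum_{h=1}^k(N_{h-1}-N_h)= \frac{N_0-N_k}{N_k-1}$ (telescoping), which produces exactly the claimed bound.

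The main obstacle I anticipate is Step~1 — pinning down a clean one-step contraction/expansion estimate in $W_2$ for the \emph{coupled pair of systems of different sizes} under the shared-noise assumption \eqref{eq:asm_red}. The subtlety is that the update map depends on the consensus point $\bar y^\alpha(\bar\rho^k)$, which is different for the full and the reduced system, so one is not simply iterating a single fixed Lipschitz map; one must show the consensus discrepancy is itself controlled by the current $W_2$-distance between the (personal-best) empirical measures, and then propagate \emph{two} coupled distances ($x$-distance and $y$-distance) simultaneously — the $y$-update feeds on $x$ and vice versa through the consensus — using a combined quantity like $W_2^2(\text{x-part})+W_2^2(\text{y-part})$. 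Getting the stability estimate for $\bar y^\alpha$ with explicit (if exponential) dependence on $\alpha$ and $M$, and checking that $S^\beta$'s Lipschitz dependence on $\F$ (hence, on $B_M(0)$, a constant $\propto \beta L_\F M$) does not destroy the recursion, is the technical heart; everything downstream is the routine Grönwall/telescoping bookkeeping sketched above.
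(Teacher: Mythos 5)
Your proposal is correct and follows essentially the same route as the paper: a one-step $W_2$-stability estimate for the shared-noise update rule (built from the consensus-point stability lemma and the local Lipschitz bound on $S^\beta$ on $B_M(0)$), combined with Proposition \ref{p:reduction} applied to the pre-selection ensembles $\tilde{\mathbf{z}}^h$ and a telescoping bound using $N_{h-1}-1\geq N_k-1$. The only difference is bookkeeping: you unroll a per-step recursion with a triangle inequality at every iteration, whereas the paper introduces auxiliary systems that evolve without further selection and applies the triangle inequality once at the final time; in both cases one must, as you correctly anticipate, work with a single coupling of the joint $(x,y)$-ensembles (rather than separate couplings for the $x$- and $y$-marginals) and project to the $x$-marginals only at the very end.
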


We can directly apply the above result to relate the expected $\ell_2$-errors of the two particle system, which we define as
\[ 
\textup{Err}(k) := \mathbb{E}\left[\frac{1}{N_0} \sum_{i \in I_{0}} \|x_i^k - x^*\|^2_2 \right]\,,
\quad 
\textup{Err}_{\red}(k) := \mathbb{E}\left[\frac{1}{N_k} \sum_{i \in I_{k}} \|\hat x_i^k - x^*\|^2_2 \right]\,,
\]
that is, the discrete counterpart of the mean-field error $\mathbb{E}[\|\bar X_t^i - x^*\|_2^2]$ studied in Theorem \ref{t:main}.
By definition of the 2-Wasserstein distance, we have 
\[ 
\textup{Err}(k) = \mathbb{E} \left[W^2_2(\rho_{N_0}^k, \delta_{x^*})\right]
\]
for any solution $x^*$ to \eqref{eq:pb}, and the same holds of $\textup{Err}_{\red}(k)$. We then apply
inequality
\[ W^2_2(\rho_{N_k}^k, \delta_{x^*}) \leq 2 \left( W^2_2(\rho_{N_k}^k,\rho_{N_0}^k) + W^2_2(\rho_{N_0}^k, \delta_{x^*}) \right)\,
\]
to obtain the following estimate.
\begin{corollary} Under the assumptions of Theorem \ref{t:reduction}, at all steps $k$, it holds
\be
\textup{Err}_\textup{sel}(k) \leq 2 \left(\textup{Err}(k) + C \,\max_{h = 1, \dots, k} \textup{var}(\tilde{\mathbf{z}}^h ) \, \frac{N_0 - N_k}{N_k -1} \right )  \,.
\ee
\end{corollary}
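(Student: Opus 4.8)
The plan is to obtain the Corollary as a direct consequence of Theorem~\ref{t:reduction} combined with two elementary properties of the quadratic Wasserstein distance. First I would record that for any $\mu \in \mathcal{P}_2(\RR^\d)$ and any point $p \in \RR^\d$ the only admissible coupling of $\mu$ with the Dirac mass $\delta_p$ is the product, so that $W_2^2(\mu, \delta_p) = \int_{\RR^\d} \|x - p\|_2^2\, d\mu(x)$; see e.g. \cite[Section 6]{santambrogio2015optimal}. Specializing this to the empirical measures $\rho_{N_0}^k$ and $\rho_{N_k}^k$ from \eqref{eq:emp_red} and to $p = x^*$, the unique minimizer guaranteed by \ref{asm:zero_global}, and then taking expectations over the Gaussian samples $\{\theta^h\}_{h=1}^k$ and the random selection, yields exactly the representations $\textup{Err}(k) = \mathbb{E}[W_2^2(\rho_{N_0}^k, \delta_{x^*})]$ and $\textup{Err}_{\red}(k) = \mathbb{E}[W_2^2(\rho_{N_k}^k, \delta_{x^*})]$.

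Next I would invoke the triangle inequality for $W_2$, namely $W_2(\rho_{N_k}^k, \delta_{x^*}) \le W_2(\rho_{N_k}^k, \rho_{N_0}^k) + W_2(\rho_{N_0}^k, \delta_{x^*})$, and square it using $(a+b)^2 \le 2a^2 + 2b^2$ to get the pathwise bound $W_2^2(\rho_{N_k}^k, \delta_{x^*}) \le 2\, W_2^2(\rho_{N_k}^k, \rho_{N_0}^k) + 2\, W_2^2(\rho_{N_0}^k, \delta_{x^*})$. Taking expectations, using linearity, and substituting the representations of the previous paragraph gives $\textup{Err}_{\red}(k) \le 2\,\mathbb{E}[W_2^2(\rho_{N_k}^k, \rho_{N_0}^k)] + 2\,\textup{Err}(k)$. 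Finally I would apply Theorem~\ref{t:reduction} — whose hypotheses (Assumptions \ref{asm:Lipschitz} and \ref{ass:S} together with the uniform boundedness $\{x_i^k, y_i^k\}, \{\hat x_i^k, \hat y_i^k\} \subset B_M(0)$) are exactly those assumed in the Corollary — to replace $\mathbb{E}[W_2^2(\rho_{N_k}^k, \rho_{N_0}^k)]$ by $C\,\max_{h=1,\dots,k} \textup{var}(\tilde{\mathbf{z}}^h)\,\frac{N_0 - N_k}{N_k - 1}$, which produces the stated inequality.

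I do not expect any genuine obstacle here: the Corollary is a corollary in the literal sense, and the argument is bookkeeping with the Wasserstein distance. The only points worth a line of justification are that $\rho_{N_0}^k$ and $\rho_{N_k}^k$ have finite second moments — immediate from the boundedness hypothesis $\subset B_M(0)$, so that $W_2$ and the coupling identity are meaningful — and that one must use the \emph{same} minimizer $x^*$ in the definitions of $\textup{Err}(k)$ and $\textup{Err}_{\red}(k)$, which is precisely why uniqueness of $x^*$ (Assumption \ref{asm:zero_global}) is invoked rather than merely existence. All the genuine analytic work — the stability estimates for \eqref{eq:CBOME2} and the iterated use of Proposition~\ref{p:reduction} — is already packaged inside Theorem~\ref{t:reduction}.
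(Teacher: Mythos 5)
Your proposal is correct and follows essentially the same route as the paper: identify $\textup{Err}(k)$ and $\textup{Err}_{\red}(k)$ with $\mathbb{E}[W_2^2(\cdot,\delta_{x^*})]$, apply the squared triangle inequality $W_2^2(\rho_{N_k}^k,\delta_{x^*}) \leq 2\bigl(W_2^2(\rho_{N_k}^k,\rho_{N_0}^k)+W_2^2(\rho_{N_0}^k,\delta_{x^*})\bigr)$, and invoke Theorem \ref{t:reduction}. Your extra remarks on finite second moments and the Dirac coupling are harmless bookkeeping (the paper in fact notes the identity holds ``for any solution $x^*$'', so uniqueness is not really needed here), and nothing in your argument deviates from the paper's proof.
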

Before concluding the section, let us report some remarks concerning the theoretical results just presented. 
\begin{remark}^^>
\begin{itemize}
\item Proof of Theorem \ref{t:reduction} can be adapted to any other particle system with random selection, provided that the update rule is stable with respect to the 2-Wasserstein distance. In the proposed method, such stability was proved thanks to the approximation of the global best $\bar y^{\infty,k}$ with $\bar y^{\alpha,k}$ for $\alpha \gg1$ (see \eqref{eq:yalpha}) and $S(x,y)$ with $S^\beta(x,y)$ for $\beta\gg1$ in the personal best update \eqref{eq:updateybeta}.

\item Quantitative estimates on the variance decay can be used, if available, to improve the error bound in Theorem \ref{t:reduction}, see also proof in Appendix \ref{s:proofreduction}.

\item The error introduced by a sub-sampling technique in a Monte Carlo integral approximation is expected to be of order \be
2\,\var (\mathbf{z})\, \left(\frac{1}{N-1} - \frac{1}{N_{\red}-1}\right) = 2\,\var (\mathbf{z})\,\frac{N - N_\red}{(N-1) (N_\red-1)}\,,
\label{eq:estMC}
\ee
 see e.g. \cite{JLJ}. Therefore, an additional factor of order $1/(N_{\red}-1)$ seems to be missing in Proposition \ref{p:reduction}. We remark, though, that Proposition \ref{p:reduction} does not concern the Monte Carlo approximation of an integral quantity, but rather consider the 2-Wasserstein distance between \gb{empirical probability} measures. 
 Numerical simulations suggest that estimates of order \eqref{eq:estMC} do not hold on in this case, see Fig.\ref{fig:selection}.
 
\end{itemize}
\end{remark}

\begin{figure}
\centering
\hfill
\includegraphics[height = 6cm]{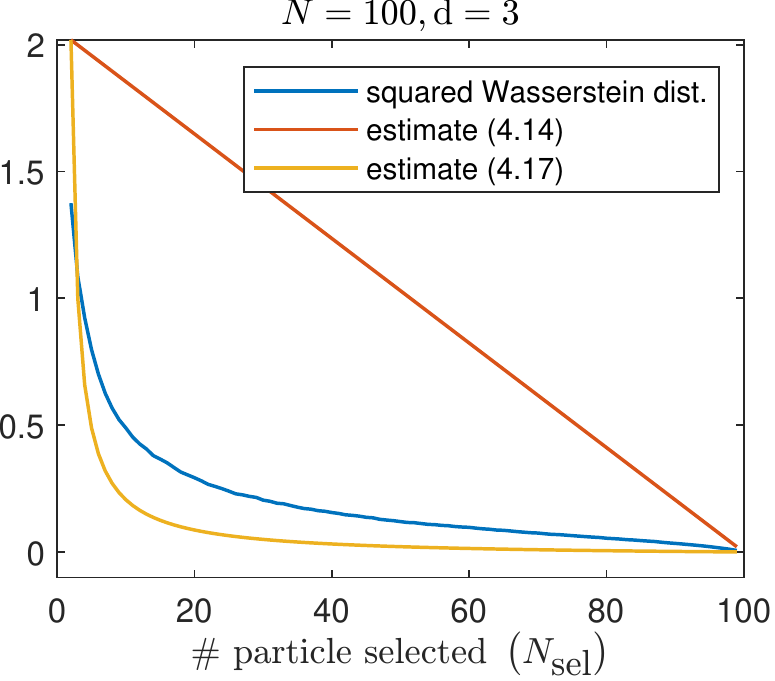}
\hfill
\includegraphics[height = 6cm]{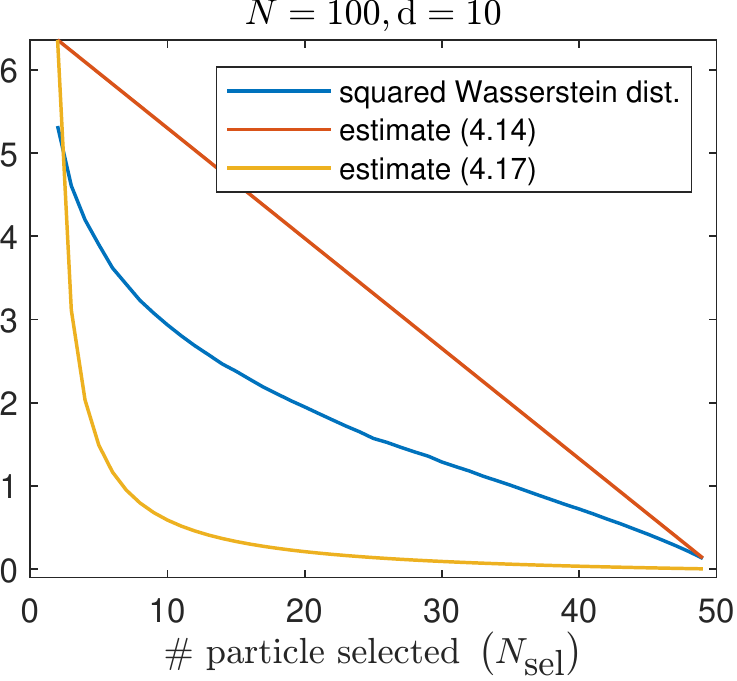}
\hfill
\caption{Numerical validation of Proposition \ref{p:reduction} with different dimensions $\textup{d}=3,10$. $N = 100$ points are randomly, uniformly sampled over $[ 0,1]^\textup{d}$ to construct the empirical distribution $\mu_N$ and $N_{\textup{sel}}\in [2, N-1]$ are discarded to obtain $\mu_{N_\red}$. The experiment is repeated $500$ times for all $N_\red$ to obtain an approximation of $\mathbb{E} \left[W_2^2(\mu_N, \mu_{N_\textup{sel}})\right]$ (blue line). In red, estimate provided by Proposition \ref{p:reduction} (RHS of \eqref{eq:myestMC}), in yellow the one given equation \eqref{eq:estMC}. Wasserstein distances are computed with the \texttt{ot.emd} function provided by the Python Optimal Transport library \cite{flamary2021pot}.
}
\label{fig:selection}
\end{figure}

\section{Conclusions}
\label{sec:conclusions}
In this work, we studied a Consensus-Based Optimization algorithm with Memory Effects (CBO-ME) and random selection for single objective optimization problems of the form \eqref{eq:pb}. While sharing common features with Particle Swarm Optimization (PSO) methods, CBO-ME differs on the way the particle system explore the search space. Its structure provides greater flexibility in balancing the exploration and exploitation processes. In particular, we implemented and analytically \gb{investigatesd} a random selection strategy which allows to reduce the algorithm computational complexity, without affecting convergence properties and overall accuracy. This analysis is entirely general and, in perspective, applicable to other \gb{particle-based} optimization methods as well. The convergence analysis to the global minimum is carried out by relying on a mean-field approximation of the particle system and error estimates are given under mild assumptions on the objective function. We compared CBO-ME against CBO without memory effects and PSO against several benchmark problem and showed how the introduction of memory effects and random selection improves the algorithm performance. Applications to image segmentation and machine learning problems are \gb{also} reported. 

\appendix

\section{Proofs}
\label{app:convergence}



%
\subsection{Notation and auxiliary lemmas}

We will use the following notation.  For any $a \in \RR$, $|a|$ indicates the absolute value. For a given vector $b\in \RR^\d$,  $\| b\|_{p}$ indicates its $p$-norm, $p \in [1,\infty]$; $(b)_\ell$ its $\ell$-th component; while $\textup{diag}(b) \in \RR^{\d \times \d}$ is the diagonal matrix with elements of $b$ on the main diagonal. Let $a, b \in \RR^\d, \langle a, b \rangle$ denotes the scalar product in $\RR^\d$.
For a given closed convex set $A \subset\RR^\d$, $\mathcal{N}(A,x), \mathcal{T}(A,x)$ denote the \gb{Clarke} normal and the tangential cone at $x\in A$ respectively. 
\gb{The $\ell^p$-ball, $p \in [1, \infty]$, of radius $r$ centered at $x\in \RR^\d$ is indicated with $B^p_r(x)=\{x\in \RR^\d \,|\, \|x\|_p \leq r\}$.} 
All considered stochastic processes are assumed to take their realizations over the common probability space
$(\Omega, \mathcal{\bar F}, \mathbb{P})$. $\mathcal{P}(\RR^\d)$ is the set of Borel probability measures over $\RR^\d$ and $\mathcal{P}_q (\RR^\d) = \{\mu \in \mathcal{P}(\RR^\d) \,|\, \int \|x\|_2^q d\mu<\infty\}$ which we equip with the Wasserstein distance $W_q$, $q \geq 1$, see \cite{santambrogio2015optimal}. For a random variable $X$, $X \sim \mu$, $\mu\in \mathcal{P}(\RR^\d)$ indicates a sampling procedure such that $\mathbb{P}(X \in A ) = \mu(A)$ for any Borel set $A \subset \RR^\d$. With $\textup{Unif}(A) \in \mathcal{P}(\RR^\d)$ we denote the uniform probability measure over a bounded Borel set $A$. 
Throughout the computations, $C$ will denote an arbitrary positive constant, whose value may vary from line to line. Dependence on relevant parameters or variables, will be underlined.

\begin{lemma}[{\cite[Lemma 3.2]{carrillo2018analytical}}]
\gb{Let $\mathcal{F}$ satisfy Assumption \ref{asm:Lipschitz}} and 
 $\rho_1, \rho_2 \in \mathcal{P}_4(\RR^\d)$ with
\[ \int \|x\|_2^4 \,d\rho_1\,, \quad \int \|x\|_2^4\, d\rho_2 \leq M\,.
\]
Then, the following stability estimate holds
\be
\| \bar y^\alpha(\rho_1) - \bar y^\alpha(\rho_2) \|_2 \leq C\, W_2(\rho_1, \rho_2)
\label{eq:est1} \notag
\ee 
for a constant $C = C(\alpha, L_\F, M)$.
\label{l:yalpha}
\end{lemma}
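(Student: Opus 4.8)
The plan is to treat $\bar y^\alpha(\rho)$ as a ratio of two $\rho$-dependent integrals and estimate each piece separately. Writing $\omega_\alpha(y) := e^{-\alpha\mathcal{F}(y)}$, I would first decompose the difference as
\[
\bar y^\alpha(\rho_1) - \bar y^\alpha(\rho_2) = \frac{\int y\,\omega_\alpha\, d\rho_1 - \int y\,\omega_\alpha\, d\rho_2}{\int \omega_\alpha\, d\rho_1} + \Big(\int y\,\omega_\alpha\, d\rho_2\Big)\frac{\int \omega_\alpha\, d\rho_2 - \int \omega_\alpha\, d\rho_1}{\big(\int \omega_\alpha\, d\rho_1\big)\big(\int \omega_\alpha\, d\rho_2\big)},
\]
so that the problem reduces to (i) a uniform lower bound on the denominators $\int\omega_\alpha\,d\rho_i$, (ii) an upper bound on the numerator vector $\int y\,\omega_\alpha\,d\rho_2$, and (iii) $W_2$-stability of the two maps $\rho\mapsto\int\omega_\alpha\,d\rho$ and $\rho\mapsto\int y\,\omega_\alpha\,d\rho$.

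For the lower bound in (i), I would combine boundedness below of $\mathcal{F}$ with the upper growth bound $\mathcal{F}(x)-\inf\mathcal{F}\leq c_u(1+\|x\|_2^2)$ from Assumption~\ref{asm:Lipschitz}: then $\int\omega_\alpha\,d\rho\geq e^{-\alpha\inf\mathcal{F}}\int e^{-\alpha c_u(1+\|y\|_2^2)}\,d\rho$, and Jensen's inequality gives $\int\omega_\alpha\,d\rho\geq e^{-\alpha\inf\mathcal{F}}e^{-\alpha c_u(1+\int\|y\|_2^2\,d\rho)}$, which is bounded below since $\int\|y\|_2^2\,d\rho\leq M^{1/2}$ by the fourth-moment hypothesis. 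The upper bound in (ii) is immediate: $\|\int y\,\omega_\alpha\,d\rho_2\|_2\leq e^{-\alpha\inf\mathcal{F}}\int\|y\|_2\,d\rho_2\leq e^{-\alpha\inf\mathcal{F}}M^{1/4}$.

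The heart of the argument is (iii). Since $\omega_\alpha$ and $y\,\omega_\alpha$ are not globally Lipschitz, I cannot simply invoke Kantorovich--Rubinstein duality; instead I would fix an optimal transport plan $\gamma$ between $\rho_1$ and $\rho_2$ realizing $W_2$ and estimate the integrand pointwise. Using $|e^{-a}-e^{-b}|\leq e^{-\min(a,b)}|a-b|$ together with $\mathcal{F}\geq\inf\mathcal{F}$ and the Lipschitz-type bound of Assumption~\ref{asm:Lipschitz} gives
\[
|\omega_\alpha(y)-\omega_\alpha(y')|\leq \alpha e^{-\alpha\inf\mathcal{F}}|\mathcal{F}(y)-\mathcal{F}(y')|\leq \alpha L_\mathcal{F} e^{-\alpha\inf\mathcal{F}}\big(\|y\|_2+\|y'\|_2\big)\|y-y'\|_2.
\]
Integrating against $\gamma$ and applying Cauchy--Schwarz, the factor $\|y-y'\|_2$ produces $W_2(\rho_1,\rho_2)$, while the growth factor $\|y\|_2+\|y'\|_2$ is absorbed by the second moments of $\rho_1,\rho_2$, controlled by $M^{1/2}$. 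The analogous bound for $y\,\omega_\alpha$ follows by writing $y\,\omega_\alpha(y)-y'\,\omega_\alpha(y')=y\big(\omega_\alpha(y)-\omega_\alpha(y')\big)+(y-y')\omega_\alpha(y')$ and repeating the coupling/Cauchy--Schwarz step; here the extra factor $\|y\|_2$ multiplying $(\|y\|_2+\|y'\|_2)$ forces fourth-order moments of the measures, which is exactly why the hypothesis is stated for $\mathcal{P}_4$ rather than $\mathcal{P}_2$. Substituting (i)--(iii) into the decomposition yields $\|\bar y^\alpha(\rho_1)-\bar y^\alpha(\rho_2)\|_2\leq C\,W_2(\rho_1,\rho_2)$ with $C=C(\alpha,L_\mathcal{F},M)$, the fixed constants $c_u$ and $\inf\mathcal{F}$ being absorbed into $C$.

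The main obstacle, as noted, is step (iii): the lack of global Lipschitz continuity means the stability cannot be read off from duality, and one must carefully track which moments of the measures survive the Cauchy--Schwarz splitting. The bookkeeping of these moments---and in particular the observation that the product $\|y\|_2(\|y\|_2+\|y'\|_2)$ appearing in the $y\,\omega_\alpha$ estimate is what requires the fourth-moment bound---is the only genuinely delicate point; everything else is elementary once the decomposition and the exponential lower bound on the denominators are in place.
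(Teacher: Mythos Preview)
Your argument is correct and is, in essence, exactly the proof of \cite[Lemma~3.2]{carrillo2018analytical}, which the paper cites rather than reproves. The paper itself gives no proof of this lemma, so there is nothing to compare against beyond the original source; your decomposition into numerator and denominator, the Jensen-based lower bound on $\int\omega_\alpha\,d\rho$, and the coupling/Cauchy--Schwarz treatment of the non-Lipschitz integrands (with the fourth-moment bound entering precisely where you indicate) match the standard argument.
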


\begin{lemma} Under Assumptions \ref{asm:Lipschitz} and \ref{ass:S}, for any $x_1, x_2, y_1, y_2 \in \gb{B^2_M}(0)$, it holds
\[
\|(x_1 - y_1)S^\beta(x_1, y_1) - (x_2 - y_2)S^\beta(x_2,y_2) \|_2 \leq C\, \left(\|x_1 - y_1\|_2 + \|x_2 - y_2 \|_2 \right)
\]
where $C = C(\beta, L_\F, M)$.
\label{l:sbeta}
\end{lemma}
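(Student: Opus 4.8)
The plan is to prove the Lipschitz-type bound in Lemma~\ref{l:sbeta} by splitting the difference into two pieces using the standard ``add and subtract'' trick, controlling one piece by the Lipschitz continuity of the scalar factor $S^\beta$ and the other by the boundedness of the domain $B^2_M(0)$. Concretely, write
\[
(x_1 - y_1)S^\beta(x_1,y_1) - (x_2-y_2)S^\beta(x_2,y_2)
= \big[(x_1-y_1) - (x_2-y_2)\big]S^\beta(x_1,y_1)
+ (x_2-y_2)\big[S^\beta(x_1,y_1) - S^\beta(x_2,y_2)\big].
\]
For the first term, since $S^\beta$ takes values in $[0,2]$ by Assumption~\ref{ass:S}, its contribution is at most $2\|(x_1-y_1)-(x_2-y_2)\|_2 \le 2(\|x_1-y_1\|_2 + \|x_2-y_2\|_2)$. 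For the second term, I would use $\|x_2-y_2\|_2 \le 2M$ (both points lie in $B^2_M(0)$) together with an estimate on $|S^\beta(x_1,y_1) - S^\beta(x_2,y_2)|$.

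The key sub-step is bounding the oscillation of $S^\beta$. Using the structure \eqref{eq:regS}, $S^\beta(x,y) = 2\psi(\beta(\F(y)-\F(x)))$ with $\psi$ Lipschitz with constant $1$, so
\[
|S^\beta(x_1,y_1) - S^\beta(x_2,y_2)| \le 2\beta\big|(\F(y_1)-\F(x_1)) - (\F(y_2)-\F(x_2))\big|
\le 2\beta\big(|\F(x_1)-\F(x_2)| + |\F(y_1)-\F(y_2)|\big).
\]
Now invoke the locally-Lipschitz property of $\F$ from Assumption~\ref{asm:Lipschitz}: $|\F(x_1)-\F(x_2)| \le L_\F(\|x_1\|_2 + \|x_2\|_2)\|x_1-x_2\|_2 \le 2L_\F M\,\|x_1-x_2\|_2$, and similarly for the $y$'s. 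Hence $|S^\beta(x_1,y_1)-S^\beta(x_2,y_2)| \le 4\beta L_\F M(\|x_1-x_2\|_2 + \|y_1-y_2\|_2)$.

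Combining, the second term is bounded by $2M \cdot 4\beta L_\F M(\|x_1-x_2\|_2 + \|y_1-y_2\|_2) = 8\beta L_\F M^2(\|x_1-x_2\|_2 + \|y_1-y_2\|_2)$, so altogether the left-hand side is at most $C(\|x_1-y_1\|_2 + \|x_2-y_2\|_2) + C(\|x_1-x_2\|_2 + \|y_1-y_2\|_2)$ with $C = C(\beta, L_\F, M)$. The one genuine subtlety — and the main thing to get right — is that the stated conclusion only has $\|x_i - y_i\|_2$ terms on the right, not the ``cross'' differences $\|x_1-x_2\|_2$ and $\|y_1-y_2\|_2$. The resolution is Assumption~\ref{asm:zero}: $S^\beta(x,y) = 0$ unless $\F(x) < \F(y)$. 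So if, say, $\F(x_1) \ge \F(y_1)$ then the first product term vanishes identically and there is nothing to estimate for it; the oscillation bound on $S^\beta$ only needs to be applied on the region where at least one of the two products is nonzero, and there one can replace the cross differences by $\|x_i - y_i\|_2$ terms using $\|x_1 - x_2\|_2 \le \|x_1 - y_1\|_2 + \|y_1 - y_2\|_2 \le \cdots$, or more directly: when $S^\beta(x_1,y_1)>0$ we have $\F(x_1)<\F(y_1)$, and one bounds $|S^\beta(x_1,y_1) - S^\beta(x_2,y_2)|$ against $\F$-differences which, being Lipschitz on $B^2_M(0)$, are in turn controlled by $\|x_1-y_1\|_2$ plus the analogous quantity — the cleanest route is to prove $|S^\beta(x_1,y_1)-S^\beta(x_2,y_2)| \le C(\|x_1-y_1\|_2+\|x_2-y_2\|_2)$ directly, exploiting that $S^\beta$ vanishes on the complement of $\{\F(x)<\F(y)\}$, so on the ``diagonal'' $x=y$ the function is zero and its value is controlled by distance to that diagonal. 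I expect this bookkeeping with Assumption~\ref{asm:zero} to be the only nonroutine part; everything else is the mechanical splitting above.
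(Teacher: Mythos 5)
Your argument is correct for the inequality as printed, but the comparison with the paper is instructive. The first half of your proposal --- the add-and-subtract decomposition, the bound $S^\beta\le 2$ on the first piece, $\|x_2-y_2\|_2\le 2M$ on the second, and the local Lipschitz estimate $|S^\beta(x_1,y_1)-S^\beta(x_2,y_2)|\le 4\beta L_\F M\left(\|x_1-x_2\|_2+\|y_1-y_2\|_2\right)$ --- is precisely the paper's proof; the paper stops there and concludes with the bound $C\left(\|x_1-x_2\|_2+\|y_1-y_2\|_2\right)$, i.e.\ Lipschitz continuity of the map $(x,y)\mapsto (x-y)S^\beta(x,y)$ in its arguments. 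That cross-difference form is the one actually needed downstream: it is what makes the coefficients of \eqref{eq:mfsde} locally Lipschitz in Proposition \ref{p:well}, and it is what yields $\mathbb{E}\|y^{k+1}_{1,i}-y^{k+1}_{2,j}\|_2^2\le C\,\mathbb{E}\left[\|x^{k+1}_{1,i}-x^{k+1}_{2,j}\|_2^2+\|y^{k}_{1,i}-y^{k}_{2,j}\|_2^2\right]$ in Lemma \ref{l:step}; the right-hand side printed in the statement of Lemma \ref{l:sbeta} is evidently a typo for this. Your ``subtlety'' --- converting the cross differences into $\|x_i-y_i\|_2$ terms via Assumption \ref{asm:zero} --- is sound as far as it goes: since $\psi(0)=0$ and $\psi$ is $1$-Lipschitz, one gets $0\le S^\beta(x,y)\le 2\beta\left(\F(y)-\F(x)\right)_+\le 4\beta L_\F M\|x-y\|_2$ on $B^2_M(0)$, hence $|S^\beta(x_1,y_1)-S^\beta(x_2,y_2)|\le 4\beta L_\F M\left(\|x_1-y_1\|_2+\|x_2-y_2\|_2\right)$. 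But note that the statement you are thereby proving is in fact trivial: since $0\le S^\beta\le 2$, the triangle inequality alone gives the printed inequality with $C=2$, without Assumption \ref{asm:Lipschitz} and without any decomposition --- a sign that the printed right-hand side cannot be the intended content of the lemma. So there is no error in your reasoning, but the estimate that carries the weight in the paper (and its actual proof) is the Lipschitz bound in $\|x_1-x_2\|_2+\|y_1-y_2\|_2$, which you derived along the way and then steered away from in order to match the misprinted conclusion.
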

\begin{proof}\gb{ We note that function $S^\beta$ is locally Lipschitz continuous thanks to the locally Lipschitz continuity of $\F$ (Assumption \ref{asm:Lipschitz}) and the Lipschitz continuity of $\psi$ (Assumption \ref{ass:S}):} 
\begin{align*}
| S^\beta(x_1, y_1) - S^\beta(x_2,y_2) | &= |2 \psi \left(\beta(\F(y_1) - \F(x_1))  -2 \psi (\beta(\F(y_2) - \F(x_2) \right)| \\
& \leq 2\beta \left | \F(y_1) - \F(x_1) -  \F(y_2) + \F(x_2)  \right| \\
& \leq \gb{4 \beta L_\F M }\left( \|x_1 - x_2\|_2 + \|y_1 - y_2\|_2  \right)\,.
\end{align*}
Next, we have
\begin{align*}
\|(x_1 - y_1)S^\beta(x_1,& y_1) - (x_2 - y_2)S^\beta(x_2,y_2) \|_2  \\
 &\leq \|(x_1 - y_1)S^\beta(x_1, y_1)- (x_2 - y_2)S^\beta(x_1, y_1)\|_2\\
 &\gb{ \qquad +\|(x_2 - y_2)S^\beta(x_1, y_1) - (x_2 - y_2)S^\beta(x_2,y_2) \|_2} \\
 &\leq \| (x_1 - x_2 +y_2 - y_1 ) S^\beta(x_1, y_1)\|_2  + \|(x_2 - y_2) \left(S^\beta(x_1, y_1) - S^\beta(x_2,y_2) \right) \|_2 \\
 &\leq 2 \left (\|x_1 - x_2\|_2 + \|y_1 - y_2\|_2\right) + 2M| S^\beta(x_1, y_1) - S^\beta(x_2,y_2)| \\
 &\leq C \left (\|x_1 - x_2\|_2 + \|y_1 - y_2\|_2\right) 
\end{align*}
with $C = C(\beta, L_\F, M)$, where we used the 
\gb{proved locally Lipischitz continuity of $S^\beta$ in the last inequality.}
\end{proof}

\subsection{Proof of Proposition \ref{p:well}}
\label{a:well}
\begin{proof}[Proof of Proposition \ref{p:well}] 
The proof is based on the Leray–Schauder fixed point theorem \cite[Chapter 11]{gilbarg2001elliptic}, and we follow the proof of \cite[Theorem 3.2]{carrillo2018analytical}.

\textbf{Step 1.}
For any $\xi \in C([0,T], \RR^\textup{d})$ there exists a unique process $(\hat X_t, \hat Y_t) \in C([0,T], \RR^\d)$ satisfying
\be\notag
\begin{split}
d\hat X_t & = \lambda(\xi(t) -\hat X_t)\,dt + \sigma (\xi(t) - \hat X_t)\otimes\,d\hat B_t\\
d\hat Y_t &= \nu (\hat X_t-\hat Y_t)S^\beta(\hat X_t,\hat Y_t)\,dt 
\end{split}
\ee
with $\textup{Law}(\hat X_0) = \textup{Law}(\hat Y_0)  = \rho_0 \in \mathcal{P}(\RR^\d)$, \gb{by  locally Lipschitz continuity and linear growth of the coefficients (thanks to Lemma \ref{l:sbeta})}. As a consequence, we also have that $f(t):= \textup{Law}(\hat X_t,\hat Y_t)$ satisfies 
\[\gb{
\frac d{dt} \int \phi\,df(t) = \int \left( -\lambda \langle\nabla_x \phi, \xi(t) - x \rangle + \frac 12 \sigma \sum_{\ell=1}^{\textup{d}} \frac{\partial^2 \phi}{\partial x_\ell^2}(\xi (t) - y)_{\ell}^2 
- \nu S^\beta \langle \nabla_y \phi, y-x \rangle \right) df(t)}
\]
for all $\phi \in C_b^2(\RR^{2\d})$ by applying It{\^o}'s formula. Therefore, let $\bar \rho(t) = \textup{Law}(\hat Y_t)$, we can set $\mathcal{T} \xi:= \bar y^\alpha(\bar\rho(\cdot)) \in C([0,T], \RR^{\d})$ to define
\be \notag
\mathcal{T}: C([0,T], \RR^{\d}) \to C([0,T], \RR^{\d}) .
\ee

\textbf{Step 2.} We prove now compactness of $\mathcal{T}$.  Thanks to $\rho_0 \in \mathcal{P}_4 (\RR^{\d})$ and standard results for SDEs (see \cite[Chapter 7]{arnold1974stochastic}) we have boundedness of the forth moments
\be \notag
\mathbb{E}\left[ \| \hat X_t\|_2^4 + \|\hat Y_t \|_2^4 \right] \leq c_1\left( 1 + \mathbb{E}[   \|\hat X_0\|_2^4 + \|\hat Y_0\|_2^4 ] e^{c_2 t} \right)
\ee
for some $c_1, c_2>0$. Therefore, we can apply Lemma \ref{l:yalpha} to obtain for any $0<s<t<T$, 
\be \notag
\|\bar  y^\alpha(\bar \rho(t)) - \bar y^\alpha(\bar \rho(s))\|_2 \leq C W_2\left(\bar \rho(t), \bar \rho(s)\right) \leq \tilde C |t - s|^{1/2}\,
\ee 
for some constants $C, \tilde C>0$, from which Hölder continuity of $\,t \mapsto \bar y^\alpha(\bar \rho(t))$ follows. 
Compactness of $\mathcal{T}$ follows by
\[\mathcal{T}( C([0,T], \RR^{\d})) \subset C^{0,\frac12}([0,T], \RR^{\d})\hookrightarrow C([0,T], \RR^{\d})\,.\]

\textbf{Step 3.}
Consider $\xi \in C([0,T],\RR^\d)$ satisfying $\xi = \tau \mathcal{T} \xi $, for $\tau \in [0,1].$ Thanks to \cite[Lemma 3.3]{carrillo2018analytical} and boundedness of second moments, we obtain 
compactness of the set 
\[ \{\xi \in C([0,T], \RR^\d)\,:\, \xi = \tau \mathcal{T} \xi, \tau \in[0,1] \}  \] 
and by Leray–Schauder fixed point theorem there exists a fixed point for the mapping $\mathcal{T}$ and hence a solution to \eqref{eq:mfsde}.

\end{proof}

\subsection{Proof of Theorem \ref{t:main}}

Having proved there exists a solution $(\bar X_t,\bar  Y_t)_{t\in [0,T]}$ to the mean-field process \eqref{eq:mfsde}
we are here interested in studying the expected $\ell_2$-error given by 
\be
\mathbb{E}\|\bar X_t - x^* \|^2_2 \notag
\ee
where $x^*$ is the unique solution to the minimization problem \eqref{eq:pb} \gb{(uniqueness is given by Assumption \ref{ass:f})}. We do so by means of the following quantitative version of the Laplace principle. 

\begin{proposition}[\gb{Quantitative} Laplace principle {\cite[Proposition 1]{fornasier2022aniso}}]

Let $\rho \in \mathcal{P}(\RR^\textup{d})$ be such that $x^* \in \textup{supp}( \rho)$ and fix $\alpha>0$. For any $r>0$, define $\F_r := \sup_{x \in B^\infty_r(x^*)} \F(x) - \F(x^*)$\,.

Then, under Assumption \ref{ass:f}, for any $ r \in (0,R_0]$ and $q>0$ such that $q+ \mathcal{F}_r \leq \F_{\infty} := \eta R_0^\gamma$, it holds
\be
\| \gb{\bar y^\alpha( \rho)} - x^* \|_2 \leq \frac{\sqrt{\textup{d}}(q + \mathcal{F}_r)^{\gamma}}\eta + \frac{\sqrt{\textup{d}}\exp (-\alpha q)}{ \rho (B^\infty_r(x^*))} \int \| x - x^*\|_2\, d \rho(x).
\label{eq:laplace}
\ee
\label{p:laplace}
\end{proposition}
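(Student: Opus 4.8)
The plan is to reproduce the argument behind \cite[Proposition 1]{fornasier2022aniso}, exploiting the fact that $\bar y^\alpha(\rho)$ is nothing but the barycenter of $\rho$ reweighted by the Gibbs density $e^{-\alpha \F}$, so it must sit close to $x^*$ once $\alpha$ is large enough that this weight concentrates near the minimizer. Throughout I write $m:=\F(x^*)=\inf\F$ (attained at $x^*$ by \ref{asm:zero_global}) and introduce the normalizing constant $Z_\alpha := \int e^{-\alpha(\F(y)-m)}\,d\rho(y)$, which is the denominator in the definition of $\bar y^\alpha(\rho)$ after multiplying numerator and denominator by $e^{\alpha m}$. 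The hypothesis $x^*\in\textup{supp}(\rho)$ guarantees $\rho(B^\infty_r(x^*))>0$ for every $r>0$, hence $Z_\alpha>0$ and every quantity below is finite.

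First I would write $\bar y^\alpha(\rho)-x^* = Z_\alpha^{-1}\int (y-x^*)\,e^{-\alpha(\F(y)-m)}\,d\rho(y)$ and, since the weights $Z_\alpha^{-1}e^{-\alpha(\F(y)-m)}\,d\rho(y)$ form a probability measure, apply the triangle inequality for vector-valued integrals (equivalently Jensen's inequality for the convex map $v\mapsto\|v\|_2$) to obtain
\[
\|\bar y^\alpha(\rho)-x^*\|_2 \leq \frac{1}{Z_\alpha}\int \|y-x^*\|_2\, e^{-\alpha(\F(y)-m)}\,d\rho(y)\,.
\]
The heart of the proof is then to split this numerator at the critical radius $\tilde r := \left(\eta^{-1}(q+\F_r)\right)^{1/\gamma}$. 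The admissibility constraint $q+\F_r\leq \F_\infty=\eta R_0^\gamma$ ensures $\tilde r\leq R_0$, which is precisely what makes the two regimes of the conditioning assumption \ref{asm:icp} dovetail.

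On the inner region $B^\infty_{\tilde r}(x^*)$ I would bound $\|y-x^*\|_2\leq \sqrt{\textup{d}}\,\|y-x^*\|_\infty\leq \sqrt{\textup{d}}\,\tilde r$ and pull this constant out, so that the corresponding part of the numerator is at most $\sqrt{\textup{d}}\,\tilde r\,Z_\alpha$; dividing by $Z_\alpha$ produces the first term $\sqrt{\textup{d}}\,(q+\F_r)^{1/\gamma}\eta^{-1/\gamma}$. On the complementary region I would invoke \ref{asm:icp} in each of its two sub-cases: if $\tilde r<\|y-x^*\|_\infty\leq R_0$ then $\F(y)-m\geq \eta\|y-x^*\|_\infty^\gamma>\eta\tilde r^\gamma=q+\F_r$, while if $\|y-x^*\|_\infty>R_0$ then $\F(y)-m\geq \eta R_0^\gamma=\F_\infty\geq q+\F_r$; in either case $e^{-\alpha(\F(y)-m)}\leq e^{-\alpha(q+\F_r)}$, so this part of the numerator is at most $e^{-\alpha(q+\F_r)}\int\|y-x^*\|_2\,d\rho(y)$.

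It then remains to lower-bound the normalization. Restricting the integral defining $Z_\alpha$ to $B^\infty_r(x^*)$ and using that $\F(y)-m\leq \F_r$ there (by the very definition of $\F_r$) gives $Z_\alpha\geq e^{-\alpha\F_r}\rho(B^\infty_r(x^*))$, which is strictly positive by the support assumption. Dividing the outer-region estimate by this lower bound, the factor $e^{-\alpha\F_r}$ cancels the $e^{-\alpha\F_r}$ hidden inside $e^{-\alpha(q+\F_r)}$ and leaves exactly $e^{-\alpha q}\,\rho(B^\infty_r(x^*))^{-1}\int\|y-x^*\|_2\,d\rho(y)$, the second term of the claim; summing the two contributions yields the bound. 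The only genuinely delicate point is the dovetailing of the two conditioning regimes at the radius $\tilde r$, which is exactly why the constraint $q+\F_r\leq\F_\infty$ is imposed; everything else is bookkeeping of the Gibbs weights and the cancellation of the $e^{-\alpha\F_r}$ factors between numerator and denominator.
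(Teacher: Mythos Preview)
The paper does not supply its own proof of this proposition; it is quoted verbatim from \cite[Proposition~1]{fornasier2022aniso} and then used as a black box in the proof of Theorem~\ref{t:main}. Your argument reproduces the standard proof from that reference and is correct.

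Two small remarks on what you actually obtain versus what is printed. First, your splitting radius $\tilde r=((q+\F_r)/\eta)^{1/\gamma}$ produces the first term $\sqrt{\textup{d}}\,((q+\F_r)/\eta)^{1/\gamma}$, with exponent $1/\gamma$ rather than the $\gamma$ appearing in the paper's display~\eqref{eq:laplace}; the printed exponent is almost certainly a transcription slip, since inverting the growth condition in~\ref{asm:icp} requires a $\gamma$-th root, and the subsequent use in the proof of Theorem~\ref{t:main} (``take $q,r$ small enough'') is insensitive to which exponent is correct. Second, your treatment of the outer region yields the second term without the prefactor $\sqrt{\textup{d}}$, hence a bound slightly sharper than the one stated; the extra $\sqrt{\textup{d}}$ in~\eqref{eq:laplace} arises in the original reference from a coordinate-wise argument followed by an $\ell_\infty\to\ell_2$ conversion, which your direct $\ell_2$ estimate avoids. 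Neither point is a gap in your reasoning.
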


We remark that RHS of \eqref{eq:laplace} can be made arbitrary small by taking large values of $\alpha$ and small values of $q,r$ \gb{provided the integral is bounded.}

\gb{\begin{lemma}Let $(\bar X_t ,\bar Y_t)_{t \in [0,T]}$ be a solution to \eqref{eq:mfsde} and initial data $\bar X_0  = \bar Y_0$ and $x^* \in \RR^\textup{d}$. For any $t\in[0,T]$, it holds
\be
\mathbb{E}[\|\bar Y_t - x^* \|^2_2] \leq 2  e^{\nu t}\sup_{s \in [0,t]}\mathbb{E}[\|\bar X_s - x^* \|_2^2]\,.
\label{eq:lemma3}
\ee
\end{lemma}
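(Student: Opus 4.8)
The plan is to exploit that the $\bar Y$-component of \eqref{eq:mfsde} solves a random ODE driven by the (a.s.\ continuous) path $t\mapsto \bar X_t$: since $S^\beta$ is continuous, $t\mapsto (\bar X_t-\bar Y_t)S^\beta(\bar X_t,\bar Y_t)$ is continuous pathwise, so $t\mapsto \bar Y_t$ is $C^1$ pathwise with $\dot{\bar Y}_t=\nu(\bar X_t-\bar Y_t)S^\beta(\bar X_t,\bar Y_t)$, and we may differentiate $\|\bar Y_t-x^*\|_2^2$ along trajectories and then close a Gr\"onwall-type estimate after taking expectations.

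First I would compute, for a.e.\ $\omega$ and every $t\in[0,T]$,
\[
\frac{d}{dt}\|\bar Y_t - x^*\|_2^2 = 2\nu\, S^\beta(\bar X_t,\bar Y_t)\,\big\langle \bar Y_t - x^*,\ \bar X_t-\bar Y_t\big\rangle .
\]
Writing $\bar X_t-\bar Y_t=(\bar X_t-x^*)-(\bar Y_t-x^*)$, the inner product equals $\langle \bar Y_t-x^*,\bar X_t-x^*\rangle-\|\bar Y_t-x^*\|_2^2$, which by Young's inequality is bounded by $\tfrac12\|\bar X_t-x^*\|_2^2-\tfrac12\|\bar Y_t-x^*\|_2^2\le \tfrac12\|\bar X_t-x^*\|_2^2$. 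The one point that needs care is that $S^\beta$ multiplies a sign-indefinite quantity; using $0\le S^\beta\le 2$ (the codomain of $S^\beta$ fixed in Assumption \ref{ass:S}) and a two-line case split — if the bracket is $\le 0$ the product is $\le 0$, if it is $>0$ the Young bound gives at most $\tfrac12\|\bar X_t-x^*\|_2^2$ — one obtains in both cases the clean pointwise inequality
\[
\frac{d}{dt}\|\bar Y_t - x^*\|_2^2 \ \le\ 2\nu\,\|\bar X_t - x^*\|_2^2 .
\]

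Then I would integrate in time, using $\bar Y_0=\bar X_0$, to get pathwise $\|\bar Y_t-x^*\|_2^2\le \|\bar X_0-x^*\|_2^2+2\nu\int_0^t\|\bar X_s-x^*\|_2^2\,ds$, take expectations, and swap $\mathbb{E}$ with $\int_0^t$ by Tonelli (nonnegative integrand; finiteness of $\mathbb{E}\|\bar X_s-x^*\|_2^2$ follows from the fourth-moment bounds established in the proof of Proposition \ref{p:well}). Bounding each term by $\sup_{s\in[0,t]}\mathbb{E}\|\bar X_s-x^*\|_2^2$ gives $\mathbb{E}\|\bar Y_t-x^*\|_2^2\le (1+2\nu t)\sup_{s\in[0,t]}\mathbb{E}\|\bar X_s-x^*\|_2^2$, and \eqref{eq:lemma3} follows from $1+2\nu t\le 2(1+\nu t)\le 2e^{\nu t}$. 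The argument is essentially routine; the only things to watch are the sign bookkeeping when applying $S^\beta\le 2$ to the indefinite term and the integrability needed to bring $\mathbb{E}$ inside the time integral, both of which are already available, so there is no substantive obstacle.
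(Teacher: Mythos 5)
Your proof is correct and follows essentially the same route as the paper: differentiate $\|\bar Y_t - x^*\|_2^2$ along the ODE for $\bar Y_t$, decompose $\bar X_t - \bar Y_t = (\bar X_t - x^*) - (\bar Y_t - x^*)$, and estimate the inner product via Young's inequality. The only difference is in how you close: by retaining the cancellation from the $-\|\bar Y_t - x^*\|_2^2\,S^\beta$ term (and handling $S^\beta\le 2$ with a case split) you obtain the pointwise bound $2\nu\|\bar X_t - x^*\|_2^2$ and finish by direct integration and $1+2\nu t\le 2e^{\nu t}$, whereas the paper keeps a $\nu\|\bar Y_t - x^*\|_2^2$ term and invokes Gr\"onwall's inequality -- both give \eqref{eq:lemma3}.
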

\begin{proof}
Due to \eqref{eq:mfsde} \gb{and chain rule}, it holds
\begin{align*}
\frac{d}{dt} \|\bar Y_t - x^* \|^2_2  &= 2 \nu \langle \bar Y_t - x^* , \bar X_t - \bar Y_t \rangle S^\beta(\bar X_t, \bar Y_t) dt \\
& = 2\nu \langle \bar Y_t - x^* , \bar X_t - x^*\rangle  S^\beta(\bar X_t, \bar Y_t) dt - 2\nu \|\bar Y_t - x^* \|_2^2  S^\beta(\bar X_t, \bar Y_t) dt \\
& \leq  \nu \left( \|\bar Y_t - x^*\|_2^2 + \|\bar X_t - x^*\|^2_2 \right)dt
\end{align*}
By taking the expectation and applying Grönwall's inequality, we have
\[
 \mathbb{E}[\|\bar Y_t - x^* \|^2_2]  \leq  \mathbb{E}[\|\bar Y_0 - x^* \|^2_2] e^{ \nu t} + \int_0^t \mathbb{E}[\|\bar X_s - x^* \|^2_2] e^{\nu (t-s)} ds\,. 
\]
Estimate \eqref{eq:lemma3} can be obtained after noting that $\mathbb{E}[\|\bar Y_0 - x^* \|^2_2]  = \mathbb{E}[\|\bar X_0 - x^* \|^2_2]$ due to choice of the initial data, and by taking the supremum over all times $s \in [0,t]$.
\end{proof}
}

To apply Proposition \ref{p:laplace} to all $\bar \rho(t) = \textup{Law}(\bar Y_t),$ we need though to provide lower bounds on $\gb{\bar \rho(t) (B^\infty_r(x^*))}$ for any small radius $r$ and times $t\in [0,T]$.

\begin{lemma} Let $\bar \rho(t) = \textup{Law}(\bar Y_t)$, with $\bar Y_t$ evolving according to \eqref{eq:mfsde} and $\lim_{t \to 0}\bar \rho(t)  = \rho_0$ with $x^* \in \textup{supp}(\rho_0)$. Under Assumptions \ref{ass:f} and \ref{ass:S} ,
it holds $\bar \rho(t) (B^\infty_r(x^*)) \geq m_r>0$, for all $t \in [0,T]$ and for all $r\leq R_0$.
\label{l:mass}
\end{lemma}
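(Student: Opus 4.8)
**Proof plan for Lemma (lower bound on mass near $x^*$).**

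The plan is to establish a pointwise-in-time lower bound on $\bar\rho(t)(B^\infty_r(x^*))$ by tracking how mass initially concentrated near $x^*$ can escape this ball under the $\bar Y$-dynamics. The key structural fact is that $\bar Y_t$ evolves by the ODE $d\bar Y_t = \nu(\bar X_t - \bar Y_t)S^\beta(\bar X_t,\bar Y_t)\,dt$, which has \emph{no} diffusion term; the randomness enters $\bar Y_t$ only through $\bar X_t$. Moreover, by Assumption \ref{asm:zero}, $S^\beta(\bar X_t,\bar Y_t) > 0$ only when $\F(\bar X_t) < \F(\bar Y_t)$, i.e.\ $\bar Y_t$ moves only toward strictly better points. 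First I would use Assumption \ref{asm:conv} together with Assumption \ref{asm:zero}: if at some time $\bar Y_t$ lies in the convexity neighborhood $\{\|y-x^*\|_\infty \le R_1\}$ and $\bar X_t$ is not strictly better, $\bar Y_t$ does not move; if $\bar X_t$ \emph{is} strictly better, then since $\F$ is convex on $B^\infty_{R_1}(x^*)$ with minimizer $x^*$ (by \ref{asm:zero_global}), the better point $\bar X_t$ — or rather the sublevel set containing it — keeps the segment from $\bar Y_t$ in a region whose $\F$-value is controlled, so $\bar Y_t$ cannot leave $B^\infty_{R_1}(x^*)$. This is the ``trapping'' argument referenced in the discussion after Assumption \ref{ass:S}: once a personal best enters the convex neighborhood it stays there.

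Next I would quantify the probability that $\bar Y_t$ has entered (and is therefore trapped in) a ball of radius $r$. Since $x^* \in \textup{supp}(\rho_0)$, the initial mass satisfies $\rho_0(B^\infty_{r/2}(x^*)) =: p_0 > 0$ for every $r \le R_0$. For a realization with $\bar Y_0 = \bar X_0 \in B^\infty_{r/2}(x^*)$, I would bound the displacement $\|\bar Y_t - \bar Y_0\|$ over $[0,T]$ using Grönwall on the ODE: because $\|(\bar X_s - \bar Y_s)S^\beta\| \le 2\|\bar X_s - \bar Y_s\| \le 2(\|\bar X_s - x^*\| + \|\bar Y_s - x^*\|)$, and using boundedness of $\mathbb{E}\|\bar X_s - x^*\|_2^2$ (finite fourth moments, Step 2 of the well-posedness proof) together with the Lemma bounding $\mathbb{E}\|\bar Y_t - x^*\|_2^2$ by $2e^{\nu t}\sup_s\mathbb{E}\|\bar X_s - x^*\|_2^2$, one controls $\mathbb{E}\|\bar Y_t - \bar Y_0\|_2^2$. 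A Markov/Chebyshev inequality then shows that, with probability bounded below, $\bar Y_t$ remains within distance $r/2$ of its start, hence within $B^\infty_r(x^*)$. Combining with the trapping argument, the event $\{\bar Y_t \in B^\infty_r(x^*)\}$ has probability at least some $m_r > 0$ uniformly in $t \in [0,T]$, which is exactly the claim.

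The main obstacle I anticipate is making the ``trapping'' step fully rigorous without a gradient-flow structure: $S^\beta$ is only continuous (not smooth), $\F$ need not be differentiable, and $\bar X_t$ can in principle oscillate, so one must argue purely from the monotonicity ``$\bar Y$ moves only toward strictly $\F$-better points'' plus convexity of $\F$ near $x^*$ to conclude that the trajectory $t \mapsto \bar Y_t$ cannot exit $B^\infty_{R_1}(x^*)$ once inside — this requires checking that the straight-line motion $d\bar Y_t \propto (\bar X_t - \bar Y_t)$ toward a point of lower (or not-higher) $\F$-value stays in the convex sublevel region, which uses that on $B^\infty_{R_1}(x^*)$ the sublevel sets of $\F$ are convex and nested around $x^*$. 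A secondary technical point is choosing $R_1$ versus $r$: for $r < R_1$ the trapping ball may need to be taken as the smallest sublevel-type neighborhood contained in $B^\infty_r(x^*)$, and one should phrase the bound so that $m_r$ depends on $r$ (it degrades as $r \to 0$), which is all that Proposition \ref{p:laplace} requires.
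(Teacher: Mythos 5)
Your first ingredient --- the trapping argument based on \ref{asm:zero} and convexity --- is indeed the heart of the paper's proof, but two things go wrong in your execution. First, the invariant set cannot be the ball $B^\infty_{R_1}(x^*)$ (nor $B^\infty_r(x^*)$): if $\bar Y_t$ lies in that ball but $\F(\bar Y_t)$ is not close to $\inf\F$, the strictly better point $\bar X_t$ may lie far outside the ball (Assumption \ref{asm:icp} gives only a lower bound on $\F$, so large values of $\F$ are possible inside the ball and small values far away are only excluded below level $\eta R_0^\gamma$), and the drift $(\bar X_t-\bar Y_t)S^\beta$ can then push $\bar Y_t$ out. The correct invariant set is the sublevel set $L_\delta=\{x:\F(x)\le\inf\F+\delta\}$ with $\delta=\eta\min\{R_1,r\}^\gamma$: by \ref{asm:icp} this set is contained in $B^\infty_{\min\{R_1,r\}}(x^*)$, hence by \ref{asm:conv} it is convex, and by \ref{asm:zero} the drift at $\partial L_\delta$ either vanishes (when $\bar X_t\notin L_\delta$) or points into the tangent cone (when $\bar X_t\in L_\delta$, by convexity), so trajectories starting in $L_\delta$ never leave it. You gesture at this in your caveats ("sublevel-type neighborhood"), but the quantitative choice of level via \ref{asm:icp} is what makes the argument close.

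Second, and more seriously, your step producing the positive constant $m_r$ does not work. Once the invariance of $L_\delta$ is established, no displacement or Chebyshev estimate is needed at all: since $L_\delta$ contains an open neighborhood of $x^*$ and $x^*\in\textup{supp}(\rho_0)$, one simply takes $m_r:=\rho_0(L_\delta)>0$, and pathwise invariance gives $\bar\rho(t)(B^\infty_r(x^*))\ge\mathbb{P}(\bar Y_t\in L_\delta)\ge\mathbb{P}(\bar Y_0\in L_\delta)=m_r$ for all $t$. Your alternative route --- start from mass in $B^\infty_{r/2}(x^*)$, bound $\mathbb{E}\|\bar Y_t-\bar Y_0\|_2^2$ by Gr\"onwall and conclude by Markov/Chebyshev --- fails to deliver a lower bound: Chebyshev only gives $\mathbb{P}(\|\bar Y_t-\bar Y_0\|_2>r/2)\le 4\,\mathbb{E}\|\bar Y_t-\bar Y_0\|_2^2/r^2$, and this quantity (which moreover involves unconditional moments, not moments conditioned on $\bar Y_0\in B^\infty_{r/2}(x^*)$, and grows with $T$ through the moment estimates) need not be smaller than $1$, let alone small enough to leave a positive remainder after intersecting with the initial event. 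Points of $B^\infty_{r/2}(x^*)$ with large $\F$-value are not trapped, so the trapping argument cannot rescue this step either; the proof must route the initial mass through the sublevel set from the start, as the paper does.
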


\begin{proof}
Let $\delta = \eta \min \{R_1, r\}^{\gamma}$, we start by proving that the mass in the set
\be
L_\delta = \{x \in \RR^\textup{d}\; |\; \mathcal{F}(x)  \leq \inf \mathcal{F} + \delta  \}
\notag
\ee
is non-decreasing. We note that for this choice of $\delta$, $L_\delta$ is convex due to Assumption \ref{ass:f}. Consider now $(\Omega, \mathcal{\bar F}, \mathbb{P})$ to be the common probability space over which the considered processes take their realization and define $\Omega_\delta = \{\omega \,:\, \bar Y_0(\omega) \in L_\delta \}$. By Assumption \ref{ass:S}, $S^\beta(\bar X_t(\omega), \bar Y_t(\omega)) = 0$ whenever $\bar X_t(\omega) \notin L_\delta$. Therefore, it holds
\be
\left \langle (\bar X_t(\omega) - \bar Y_t(\omega))S^\beta(\bar X_t(\omega), \bar Y_t(\omega)) \,,\,  n(\bar Y_t(\omega)) \right \rangle 
\begin{cases}
=  0 & \textup{if}\;\bar X_t(\omega) \notin L_\delta \\
\leq 0 & \textup{if}\; \bar X_t(\omega) \in L_\delta
\end{cases}
\quad \text{for} \quad \bar Y_t(\omega) \in \partial L_\delta
\notag
\ee
for any $n(\bar Y_t(\omega)) \in \mathcal{N}(L_\delta,x)$   from which follows that $\bar Y_t(\omega)$ solves
\be
\bar Y_t(\omega) = \bar Y_0(\omega) + \int_0^t \Pi_{\mathcal{T}(L_\delta, \bar Y_s(\omega))}\left (   (\bar X_s(\omega) - \bar Y_s(\omega))S^\beta(\bar X_s(\omega), \bar Y_s(\omega))  \right) ds\,
\notag
\ee
for all $\omega \in \Omega_{\delta}$. As a consequence, if $\bar Y_0(\omega) \in L_\delta$, $\bar Y_t(\omega) \in L_\delta$ for all $t \geq0$ and so 
\[\bar \rho(t) (B^\infty_r(x^*)) = \mathbb{P}(\bar Y_t \in L_\delta )  \geq  \mathbb{P}(\bar Y_0 \in L_\delta)=:m_r\]
 for all $t \geq 0$. We conclude by noting that $m_r>0$ since $x^* \in \textup{supp}(\rho_0)$.
\end{proof}

Next, we study the evolution of the error $\mathbb{E}\| \bar X_t - x^*\|_2^2$ and, in particular, we try to bound it in terms of  $\|\bar y^\alpha(\bar \rho(s)) - x^*\|_2$ and $\mathbb{E}\| \bar X_t - x^*\|_2$ itself for $s \in [0,t]$.

\begin{proposition}\cite[Lemma 1]{fornasier2022aniso}
Let $(\bar X_t, \bar Y_t) \in C([0,T], \RR^{2\textup{d}})$ satisfy \eqref{eq:mfsde} with initial datum $\bar X_0 \sim \rho_0, \rho_0 \in \mathcal{P}_4(\RR^\textup{d}), \bar Y_0 = \bar X_0$ for some time horizon $T>0$. 

Set $\mathcal{V}(\rho(t)):=(1/2)\mathbb{E}\| \bar X_t - x^* \|_2^2$ with $\rho(t) \in \textup{Law}(\bar X_t)$.
For all $t \in [0,T]$, it holds

\begin{multline}
\frac{d}{dt} \mathcal{V}(\rho(t))  \leq  -(2\lambda - \sigma^2)\mathcal{V}(\rho(t)) + \sqrt{2}(\lambda + \sigma^2) \sqrt{\mathcal{V}(\rho(t))}\, \|\bar y^\alpha(\bar \rho(t)) - x^*\|_2 \\+ \frac{\sigma^2}2  \|\bar y^\alpha(\bar \rho(t)) - x^*\|_2^2 \,.
\label{eq:errevo}
\end{multline}

%
where $\bar \rho(t) = \textup{Law}(\bar Y_t)$.
\label{p:errevolution}
\end{proposition}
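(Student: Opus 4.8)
The plan is to obtain \eqref{eq:errevo} directly by applying It\^o's formula to $t\mapsto \|\bar X_t - x^*\|_2^2$ and then controlling the resulting cross term. Throughout, fix $t$ and abbreviate $\bar y^\alpha := \bar y^\alpha(\bar\rho(t))$, which is deterministic since it depends only on the law $\bar\rho(t)=\textup{Law}(\bar Y_t)$. First I would write the $\bar X_t$-dynamics \eqref{eq:mfsde} componentwise, so that the $\ell$-th coordinate solves $d(\bar X_t)_\ell = \lambda(\bar y^\alpha - \bar X_t)_\ell\,dt + \sigma (\bar y^\alpha - \bar X_t)_\ell\, d(\bar B_t)_\ell$ with independent scalar Brownian motions $(\bar B_t)_\ell$. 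Applying It\^o's formula to the smooth function $x \mapsto \|x-x^*\|_2^2$ and using that the quadratic variation of the $\ell$-th coordinate is $\sigma^2 (\bar y^\alpha - \bar X_t)_\ell^2\,dt$, I obtain
\be \notag
d\|\bar X_t - x^*\|_2^2 = 2\lambda \langle \bar X_t - x^*, \bar y^\alpha - \bar X_t\rangle\,dt + \sigma^2 \|\bar y^\alpha - \bar X_t\|_2^2\,dt + dM_t\,,
\ee
where $dM_t = 2\sigma \sum_\ell (\bar X_t - x^*)_\ell (\bar y^\alpha - \bar X_t)_\ell\, d(\bar B_t)_\ell$ is a local martingale. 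Taking expectations, the martingale term vanishes thanks to the fourth-moment bounds established in the proof of Proposition \ref{p:well} (which guarantee the integrand is square integrable), leaving an ordinary differential equation for $\mathbb{E}\|\bar X_t - x^*\|_2^2$.

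Next I would rewrite both remaining terms through the splitting $\bar y^\alpha - \bar X_t = (\bar y^\alpha - x^*) - (\bar X_t - x^*)$. Expanding the inner product gives $\langle \bar X_t - x^*, \bar y^\alpha - x^*\rangle - \|\bar X_t - x^*\|_2^2$, while expanding the squared norm gives $\|\bar y^\alpha - x^*\|_2^2 - 2\langle \bar X_t - x^*, \bar y^\alpha - x^*\rangle + \|\bar X_t - x^*\|_2^2$. Collecting the coefficient of $\mathbb{E}\|\bar X_t - x^*\|_2^2$ yields $(-2\lambda + \sigma^2)$, collecting the coefficient of the cross term $\mathbb{E}\langle \bar X_t - x^*, \bar y^\alpha - x^*\rangle$ yields $2(\lambda - \sigma^2)$, and the pure term $\sigma^2\|\bar y^\alpha - x^*\|_2^2$ survives. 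Dividing by $2$ and recalling $\mathcal{V}(\rho(t)) = (1/2)\mathbb{E}\|\bar X_t - x^*\|_2^2$ produces
\be \notag
\frac{d}{dt}\mathcal{V}(\rho(t)) = -(2\lambda - \sigma^2)\mathcal{V}(\rho(t)) + (\lambda - \sigma^2)\,\mathbb{E}\langle \bar X_t - x^*, \bar y^\alpha - x^*\rangle + \frac{\sigma^2}{2}\|\bar y^\alpha - x^*\|_2^2\,.
\ee

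Finally I would bound the cross term. Since $\bar y^\alpha - x^*$ is deterministic, Cauchy--Schwarz in $\RR^\textup{d}$ followed by Jensen's inequality gives $|\mathbb{E}\langle \bar X_t - x^*, \bar y^\alpha - x^*\rangle| \leq \|\bar y^\alpha - x^*\|_2\, \mathbb{E}\|\bar X_t - x^*\|_2 \leq \|\bar y^\alpha - x^*\|_2\sqrt{2\mathcal{V}(\rho(t))}$. Estimating the signed coefficient by $|\lambda - \sigma^2| \leq \lambda + \sigma^2$ (valid since $\lambda, \sigma^2 > 0$) then upgrades the equality to the stated inequality \eqref{eq:errevo}. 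The computation is otherwise routine; the only points demanding care are the componentwise bookkeeping of the anisotropic noise in It\^o's formula (ensuring the diffusion contributes exactly $\sigma^2\|\bar y^\alpha - \bar X_t\|_2^2$ rather than its isotropic analogue) and the justification that the stochastic integral is a genuine martingale, which rests on the moment estimates already available from well-posedness. I do not expect a substantive obstacle beyond this: the result is essentially a direct energy estimate for the $\bar X$-marginal, the $\bar Y$-dynamics entering only implicitly through $\bar y^\alpha(\bar\rho(t))$.
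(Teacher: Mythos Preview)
Your proof is correct and follows the standard energy-estimate approach that the paper defers to \cite[Lemma 1]{fornasier2022aniso}: apply It\^o's formula to $\|\bar X_t - x^*\|_2^2$, split $\bar y^\alpha - \bar X_t$ through $x^*$, and control the resulting cross term via Cauchy--Schwarz together with $|\lambda - \sigma^2| \leq \lambda + \sigma^2$. The paper does not reprove this result, so there is nothing further to compare.
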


\begin{proof}[Proof of Theorem \ref{t:main}]
The above result, together with Lemma \ref{l:mass}, leads to the convergence in mean-field law of the dynamics towards the solution to \eqref{eq:pb}. 
The proof can be carried out exactly as in \cite[Theorem 12]{fornasier2022aniso} and we summarize here the main steps for completeness. 

For notational simplicity, we introduce $\err(t):=\mathbb{E}[\|\bar X_t - x^*\|_2^2]$. We start by setting the time horizon $T^* = -2\log(\ve/\err(0)) / (2\lambda - \sigma^2)$. We apply Proposition \ref{p:errevolution} and, since $\mathcal{V}(\rho(t)) = \err(t)/2$, we have for all $t \in [0,T^*]$
\[
\frac{d}{dt} \err(t) \leq -(2\lambda - \sigma^2) \err(t) + (\lambda + \sigma^2) \sqrt{\err(t)} \|\bar y^\alpha(\bar \rho(t)) - x^*\|_2 + \sigma^2 \|\bar y^\alpha(\bar \rho(t)) - x^*\|_2^2\,.
\]
Let $T\geq0$ be given by
\[
T:= \sup \left\{t \in [0,T^*] \,:\, \err(t') > \ve \;\textup{and}\;  \|\bar y^\alpha(\bar \rho(t')) - x^*\|_2^2 < C(t') \quad \forall t'\in[0,t] \right\}
\]
with 
\[
C(t) := \min \left \{  \frac 14 \frac{2\lambda - \sigma^2}{\lambda + \sigma^2}\,,\, \sqrt{\frac14 \frac{2\lambda - \sigma^2}{\sigma^2}}\right \} \sqrt{\err(t)}\,.
\]
For this particular choice of $T$, we have that  for all $t \in [0,T]$
\be
\frac{d}{dt} \err(t) \leq -\frac 12(2\lambda - \sigma^2) \err(t) \quad \Rightarrow \quad \err(t) \leq \err(0)\, e^{-\frac12 (2\lambda - \sigma^2)t}
\label{eq:decayE}
\ee
where we applied Grönwall's inequality. Now, we consider three possible scenarios.

\textbf{Case $T = T^*$.} By definition of $T^*$ and decay estimate \eqref{eq:decayE}, we have $\err(T^*) = \ve$.

\textbf{Case $T < T^*$ and} $\err(T) = \ve$. Nothing to prove in this case.

\textbf{Case $T < T^*$ and $\|\bar y^\alpha(\bar \rho(s)) - x^*\|_2^2 \geq C(T)$}. We will show that if $\alpha$ is large enough, this case cannot occur. From Proposition \ref{p:laplace} and Lemma \ref{l:mass} we have
\[ 
\| \gb{\bar y^\alpha( \bar \rho(T))} - x^* \|_2 \leq \frac{\sqrt{\textup{d}}(q + \mathcal{F}_r)^{\gamma}}\eta + \frac{\sqrt{\textup{d}}\exp (-\alpha q)}{m_r} \mathbb{E}[\| \bar Y_T - x^* \|_2] 
\]
Now, by continuity of $\mathcal F$, we can take $q, r$ small small enough such that the first term on the right-hand side is strictly smaller than $C(T)/2$. Thanks to Lemma \ref{eq:lemma3} and bound \eqref{eq:decayE}, it holds
\[
\mathbb{E}[\| \bar Y_T - x^* \|_2] \leq \left(\mathbb{E}[\| \bar Y_T - x^* \|^2_2]  \right)^{1/2}
\leq \sqrt{2}e^{\frac12 \nu T} (\sup_{t \in [0,T]} \err(t)) ^{1/2} \leq \sqrt{2}e^{\frac12 \nu T} \sqrt{\err(0)}\,. 
\]
Therefore, we can take $\alpha$ sufficiently large such that 
\be
 \frac{\sqrt{\textup{d}}\exp (-\alpha q)}{m_r} \mathbb{E}[\| \bar Y_T - x^* \|_2]  \leq  \frac{\sqrt{\textup{d}}\exp (-\alpha q)}{m_r}\sqrt{2}e^{\frac12 \nu T} \sqrt{\err(0)} < \frac{C(T)}2\, ,
\ee
from which follows 
\[
\| \gb{\bar y^\alpha( \bar \rho(T))} - x^* \|_2 < C(T)\,.
\]
Therefore, we have a contradiction and we can conclude that this third case can be avoided by taking $\alpha$ sufficiently large.
\end{proof}


\subsection{Proof of Proposition \ref{p:reduction} and Theorem \ref{t:reduction}}
\label{s:proofreduction}
We start by collecting a preliminary result.

\begin{lemma}
Let $\{x_{1,i}^k,y_{1,i}^k \}_{i=1}^{N_1}$ and $\{x_{2,j}^k,y_{2,j}^k \}_{j=1}^{N_2}$ be two particle populations generated through update rules \eqref{eq:CBOME2}
with $\theta_{1,i}^k = \theta_{2,j}^k = \theta^k$ for all $i,j$ and $k \in \mathbb{Z}_+$. At any iteration step $k$ and for any couple of indexes  $(i,j)$, it holds
\begin{multline}
\mathbb{E}\left [ \| x^{k+1}_{1,i} - x^{k+1}_{2,j}\|_2^2 + \| y^{k+1}_{1,i} - y^{k+1}_{2,j}\|_2^2 \right] \leq \\ 
C \mathbb{E}\left [\| x^{k}_{1,i} - x^{k}_{2,j}\|_2^2 + \| y^{k}_{1,i} - y^{k}_{2,j}\|_2^2  + \|\bar y^\alpha(\bar \rho_1^k) - \bar y^\alpha(\bar \rho_2^k) \|_2^2 \right] \notag
\end{multline}
where $C = C(\Delta t, \lambda, \sigma, \nu, \beta)$ is a positive constant and $\bar \rho^k_1,\bar \rho^k_2$ are the empiricial distributions associated with $\{y_{1,i}^{k} \}_{i=1}^{N_1}$ and $\{y_{2,j}^{k} \}_{j=1}^{N_2}$ respectively.
\label{l:step}
\end{lemma}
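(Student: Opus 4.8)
The idea is to treat the position update and the personal-best update in \eqref{eq:CBOME2} separately, exploiting that the two populations share the same Gaussian increment $\theta^k$ at each step. Write $d_x^k := x_{1,i}^k - x_{2,j}^k$, $d_y^k := y_{1,i}^k - y_{2,j}^k$ and $\Delta\bar y^k := \bar y^\alpha(\bar\rho_1^k) - \bar y^\alpha(\bar\rho_2^k)$; note that these three quantities depend only on $\theta^0,\dots,\theta^{k-1}$ (and the initial data), while $\theta^k$ is independent of them. Subtracting the two $x$-updates and using $\theta_{1,i}^k = \theta_{2,j}^k = \theta^k$ gives
\[
x_{1,i}^{k+1} - x_{2,j}^{k+1} = (1-\lambda\Delta t)\,d_x^k + \lambda\Delta t\,\Delta\bar y^k + \sigma\sqrt{\Delta t}\,(\Delta\bar y^k - d_x^k)\otimes\theta^k .
\]
Conditioning on $\theta^0,\dots,\theta^{k-1}$ and using $\mathbb{E}[\theta^k]=0$ and $\mathbb{E}[(\theta^k)_\ell^2]=1$ for every coordinate $\ell$, the cross term vanishes, so that
\[
\mathbb{E}\big[\|x_{1,i}^{k+1} - x_{2,j}^{k+1}\|_2^2 \mid \theta^0,\dots,\theta^{k-1}\big] = \|(1-\lambda\Delta t)\,d_x^k + \lambda\Delta t\,\Delta\bar y^k\|_2^2 + \sigma^2\Delta t\,\|\Delta\bar y^k - d_x^k\|_2^2 .
\]
Applying $\|a+b\|_2^2 \le 2\|a\|_2^2 + 2\|b\|_2^2$ to each term on the right and taking the full expectation then yields $\mathbb{E}\big[\|x_{1,i}^{k+1} - x_{2,j}^{k+1}\|_2^2\big] \le C_1\,\mathbb{E}\big[\|d_x^k\|_2^2 + \|\Delta\bar y^k\|_2^2\big]$ with $C_1 = C_1(\Delta t,\lambda,\sigma)$.

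For the personal-best difference at step $k+1$, I would use that this update is deterministic once $x^{k+1}$ is known, so a pathwise bound is enough. Subtracting the two $y$-updates,
\[
y_{1,i}^{k+1} - y_{2,j}^{k+1} = d_y^k + \tfrac{\nu\Delta t}{2}\big[(x_{1,i}^{k+1}-y_{1,i}^k)S^\beta(x_{1,i}^{k+1},y_{1,i}^k) - (x_{2,j}^{k+1}-y_{2,j}^k)S^\beta(x_{2,j}^{k+1},y_{2,j}^k)\big],
\]
and the bracketed difference is controlled exactly as in the proof of Lemma~\ref{l:sbeta}: split it into a term in which only the factor $S^\beta$ varies and one in which only the factor $(x-y)$ varies, and then use the local Lipschitz bound $|S^\beta(x,y)-S^\beta(x',y')| \le 4\beta L_\F M\,(\|x-x'\|_2 + \|y-y'\|_2)$ on $B^2_M(0)$ together with $\|x-y\|_2 \le 2M$ and $0 \le S^\beta \le 2$. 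This produces a constant $C = C(\beta,L_\F,M)$ for which
\[
\big\|(x_{1,i}^{k+1}-y_{1,i}^k)S^\beta(x_{1,i}^{k+1},y_{1,i}^k) - (x_{2,j}^{k+1}-y_{2,j}^k)S^\beta(x_{2,j}^{k+1},y_{2,j}^k)\big\|_2 \le C\big(\|x_{1,i}^{k+1}-x_{2,j}^{k+1}\|_2 + \|d_y^k\|_2\big),
\]
hence, after squaring, $\|y_{1,i}^{k+1} - y_{2,j}^{k+1}\|_2^2 \le C_2\big(\|d_y^k\|_2^2 + \|x_{1,i}^{k+1}-x_{2,j}^{k+1}\|_2^2\big)$ with $C_2 = C_2(\Delta t,\nu,\beta,L_\F,M)$.

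Taking expectations in this last inequality and inserting the estimate obtained in the first paragraph for $\mathbb{E}[\|x_{1,i}^{k+1}-x_{2,j}^{k+1}\|_2^2]$ gives $\mathbb{E}[\|y_{1,i}^{k+1}-y_{2,j}^{k+1}\|_2^2] \le C_3\,\mathbb{E}[\|d_x^k\|_2^2 + \|d_y^k\|_2^2 + \|\Delta\bar y^k\|_2^2]$; adding this to the $x$-estimate proves the claim. I expect the only genuinely delicate point to be the personal-best step: since $\F$ is merely locally Lipschitz (Assumption~\ref{asm:Lipschitz}), the map $(x,y)\mapsto(x-y)S^\beta(x,y)$ is itself only locally Lipschitz, so the estimate must be confined to a bounded set — consequently the constant in fact also depends on $M$ and $L_\F$, consistently with the boundedness hypothesis under which this lemma is applied in Theorem~\ref{t:reduction}. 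Everything else — the vanishing of the noise cross term and the repeated use of Young's inequality — is routine.
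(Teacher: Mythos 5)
Your proposal is correct and follows essentially the same route as the paper: subtract the two $x$-updates exploiting the common noise $\theta^k$ (zero mean, unit variance per coordinate) together with Young's inequality, then control the $y$-update pathwise via the local Lipschitz estimate of Lemma~\ref{l:sbeta} and combine. Your observation that the constant in fact also depends on $L_\F$ and $M$ is consistent with the paper's own use of Lemma~\ref{l:sbeta} (whose constant carries those dependencies, and which is applied under the boundedness hypothesis of Theorem~\ref{t:reduction}), even though the lemma's statement lists only $(\Delta t,\lambda,\sigma,\nu,\beta)$.
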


\begin{proof}
For all $k \in \mathbb{Z}_+$ and $i,j$
\begin{align}\notag
\mathbb{E} \|x^{k+1}_{1,i} - x^{k+1}_{2,j}\|_2^2 &  \leq\mathbb{E} \Big \| x_{1,i}^k + \lambda\Delta t \left( \bar y^{\alpha}(\bar \rho^k_{1}) - x_{1,i}^k \right) + \sigma \sqrt{\Delta t} \left( \bar y^{\alpha}(\bar \rho^k_{1}) - x_{1,i}^k\right) \otimes \theta^k_{1,i} \\ \notag
 & \qquad -  \left(  x_{2,j}^k + \lambda\Delta t \left( \bar y^{\alpha}(\bar \rho^k_{2}) - x_{2,j}^k \right) + \sigma \sqrt{\Delta t} \left( \bar y^{\alpha}(\bar \rho^k_{2}) - x_{2,j}^k\right) \otimes \theta^k_{2,j} \right) \Big \|_2^2 \\ \notag
 & \leq 2 \mathbb{E}\left \| \left(1 - \lambda \Delta t - \sigma \sqrt{\Delta t}\,\textup{diag}(\theta^k) \right) (x_{1,i}^k - x_{2,j}^k)\right\|_2^2 \\ \notag
 &\qquad + 2\mathbb{E}\left \| \left( \lambda \Delta t + \sigma \sqrt{\Delta t}\,\textup{diag}(\theta^k)  \right) \left (\bar y^{\alpha}(\bar \rho^k_{1}) -  \bar y^{\alpha}(\bar \rho^k_{2})\right) \right\|_2^2\\
 & \leq 2(1+\lambda^2 \Delta t^2+ \sigma^2 \Delta t)\mathbb{E}\| x_{1,i}^k - x_{2,j}^k\|_2^2 \notag \\
 & \qquad + 2(\lambda^2 \Delta t^2 + \sigma^2 \Delta t) \mathbb{E}\|\bar y^{\alpha}(\bar \rho^k_{1}) -  \bar y^{\alpha}(\bar \rho^k_{2}) \|_2^2\,,
\label{eq:stepestimatex}
\end{align}
where we also used that $\mathbb{E}[(\theta^k)_\ell^2] = 1$ for all $\ell = 1, \dots, \d$.
We now bound $\|y_{1,i}^{k+1} - y_{2,j}^{k+1}\|_2^2$ as
\begin{align} \notag
\mathbb{E}\|y_{1,i}^{k+1} - y_{2,j}^{k+1}\|_2^2 &\leq \mathbb{E} \Big \|  y^{k}_{1,i} +  (\nu\Delta t /2)\left(x_{i,1}^{k+1} - y_{1,i}^k \right)S^\beta(x_{1,i}^{k+1},y_{1,i}^k)  \\ \notag
& \qquad - \left(y^{k}_{2,j} +  (\nu\Delta t /2)\left(x_{2,j}^{k+1} - y_{2,j}^k \right)S^\beta(x_{2,j}^{k+1},y_{2,j}^k) \right)   \Big    \|_2^2 \\ 
& \leq C \mathbb{E} \left [\|x_{i,1}^{k+1}  - x_{j,2}^{k+1}\|_2^2 + \|y_{i,1}^{k}  - y_{j,2}^{k}\|_2^2 \right]
\label{eq:stepestimatey}
\end{align}
where we used Lemma \ref{l:sbeta} and $C = C(\Delta t, \beta, \nu)$. By combining \eqref{eq:stepestimatex} and \eqref{eq:stepestimatey} we get the desired estimate.
\end{proof}

Next, we show how the particle update rule \eqref{eq:CBOME2} is stable with respect to the 2-Wasserstein distance.

\begin{proposition}[Stability of update rule \eqref{eq:CBOME2}] Let $\{x_{1,i}^k,y_{1,i}^k \}_{i=1}^{N_1}, \{x_{2,j}^k,y_{2,j}^k \}_{j=1}^{N_2} \subset B_M(0)$, for some $M>0$, be two particle populations generated through the update rules \eqref{eq:CBOME2}
with $\theta_{1,i}^k = \theta_{2,j}^k = \theta^k$ for all $i,j$ and $k \in \mathbb{Z}_+$.
Let $\mu^k_1,\mu_2^k \in \mathcal{P}(\RR^{2\textup{d}})$ the empirical probability measures defined as
\[
\mu_1^k := \frac1{N_1}\sum_{i=1}^{N_1} \delta_{(x_{1,i}^k, y_{1,i}^k)}\,,\quad 
\mu_2^k := \frac1{N_2}\sum_{j=1}^{N_2} \delta_{(x_{2,j}^k, y_{2,j}^k)}\,.
\]

\gb{Under Assumptions \ref{asm:Lipschitz} and \ref{ass:S},}
it holds
\[
\mathbb{E}\left[ W_2^2(\mu^{k+1}_1, \mu^{k+1}_2) \right] \leq C_1\, \mathbb{E}\left[W_2^2(\mu_1^k, \mu_2^k)\right]\,,
\]
where $C_1 = C_1(\Delta, \lambda, \sigma, \nu, \alpha, \beta, L_\F, M)$ is positive constant.
\label{p:wass_stability}
\end{proposition}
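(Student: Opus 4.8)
The plan is to estimate $W_2^2(\mu_1^{k+1},\mu_2^{k+1})$ by transporting mass along the coupling that is optimal at step $k$, using the fact that the common noise $\theta^k$ makes the map $(x,y)\mapsto(x^{k+1},y^{k+1})$ deterministic once $\theta^k$ is fixed. Concretely, let $w\in\Gamma(\mu_1^k,\mu_2^k)$ be an optimal coupling realizing $W_2^2(\mu_1^k,\mu_2^k)$; since both measures are empirical measures supported on $N_1$ and $N_2$ atoms respectively, $w=(w_{ij})$ is a doubly (sub)stochastic array as in \eqref{eq:wasscoupling}. I would then use the \emph{same} array $w$ as a (generally non-optimal, hence upper-bounding) coupling between $\mu_1^{k+1}$ and $\mu_2^{k+1}$. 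This is legitimate because the update rule \eqref{eq:CBOME2} acts particle-wise: conditional on $\theta^k$, particle $(x_{1,i}^k,y_{1,i}^k)$ is sent to $(x_{1,i}^{k+1},y_{1,i}^{k+1})$ and similarly for the second system, so the pushforward of $w$ under the product map is an admissible coupling of the two updated empirical measures with the same marginals. Therefore
\[
W_2^2(\mu_1^{k+1},\mu_2^{k+1}) \leq \sum_{i,j} w_{ij}\left(\|x_{1,i}^{k+1}-x_{2,j}^{k+1}\|_2^2 + \|y_{1,i}^{k+1}-y_{2,j}^{k+1}\|_2^2\right)\,.
\]

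Next I would take expectations and apply Lemma \ref{l:step} term by term inside the sum (the array $w$ is deterministic given the particle positions at step $k$, but since we only need an upper bound and $w$ depends on step-$k$ quantities, one should first condition on the step-$k$ configuration, bound using Lemma \ref{l:step}, then take the outer expectation — or simply observe $w$ is $\mathcal{F}_k$-measurable and $\theta^k$ is independent of $\mathcal{F}_k$). This gives
\[
\mathbb{E}\!\left[W_2^2(\mu_1^{k+1},\mu_2^{k+1})\right] \leq C\sum_{i,j} \mathbb{E}\!\left[ w_{ij}\left(\|x_{1,i}^{k}-x_{2,j}^{k}\|_2^2 + \|y_{1,i}^{k}-y_{2,j}^{k}\|_2^2 + \|\bar y^\alpha(\bar\rho_1^k)-\bar y^\alpha(\bar\rho_2^k)\|_2^2\right)\right]\,,
\]
with $C=C(\Delta t,\lambda,\sigma,\nu,\beta)$. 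The first two terms, summed against $w_{ij}$, are exactly $\mathbb{E}[W_2^2(\mu_1^k,\mu_2^k)]$ by optimality of $w$; the third term is independent of $(i,j)$ and $\sum_{i,j} w_{ij}=1$, so it contributes $\mathbb{E}[\|\bar y^\alpha(\bar\rho_1^k)-\bar y^\alpha(\bar\rho_2^k)\|_2^2]$.

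It remains to control the consensus-point discrepancy. Here I would invoke Lemma \ref{l:yalpha}: since all particles lie in $B_M(0)$, the fourth moments of $\bar\rho_1^k,\bar\rho_2^k$ are bounded by $M^4$, so $\|\bar y^\alpha(\bar\rho_1^k)-\bar y^\alpha(\bar\rho_2^k)\|_2 \leq C(\alpha,L_\F,M)\,W_2(\bar\rho_1^k,\bar\rho_2^k)$. Finally one observes that $W_2^2(\bar\rho_1^k,\bar\rho_2^k)\leq W_2^2(\mu_1^k,\mu_2^k)$ because $\bar\rho_i^k$ is the pushforward of $\mu_i^k$ under the $1$-Lipschitz projection $(x,y)\mapsto y$ (transporting $\mu_1^k$ to $\mu_2^k$ and then projecting gives an admissible plan between the marginals whose cost is no larger). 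Combining the three contributions yields
\[
\mathbb{E}\!\left[W_2^2(\mu_1^{k+1},\mu_2^{k+1})\right] \leq C_1\,\mathbb{E}\!\left[W_2^2(\mu_1^k,\mu_2^k)\right]
\]
with $C_1 = C_1(\Delta t,\lambda,\sigma,\nu,\alpha,\beta,L_\F,M)$, as claimed. The main obstacle — really the only delicate point — is making the coupling argument airtight when the two systems have different cardinalities $N_1\neq N_2$: one must be careful that a single array $w$ simultaneously serves as a coupling for $(\mu_1^k,\mu_2^k)$, for $(\mu_1^{k+1},\mu_2^{k+1})$, and (after projection) for $(\bar\rho_1^k,\bar\rho_2^k)$, which works precisely because all three pairs share the same index structure and the updates act coordinate-wise on each particle; everything else is a routine combination of Lemmas \ref{l:step} and \ref{l:yalpha}.
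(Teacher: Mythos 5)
Your proposal is correct and follows essentially the same route as the paper's proof: transport along the step-$k$ optimal coupling $w$ (sub-optimal at step $k+1$ thanks to the common noise $\theta^k$), bound particle-pair distances via Lemma \ref{l:step}, and control the consensus-point term via Lemma \ref{l:yalpha} together with the observation that $W_2^2(\bar\rho_1^k,\bar\rho_2^k)\leq \sum_{i,j}w_{ij}\|y_{1,i}^k-y_{2,j}^k\|_2^2\leq W_2^2(\mu_1^k,\mu_2^k)$ (which you phrase via the $1$-Lipschitz projection, the paper via sub-optimality of $w$ for the $y$-marginals — the same argument). Your extra care about $\mathcal{F}_k$-measurability of $w$ and independence of $\theta^k$ only makes explicit what the paper handles by conditioning on $\theta^k$ and taking the outer expectation at the end.
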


\begin{proof} 
Let $\mathbb{E}_{\theta^k}[\cdot]$ denote the expectation taken with respect to the sampling of $\theta^k$ only and $w \in \RR^{N_1 \times N_2}$ be the optimal coupling between $\mu_1^k, \mu_2^k$ (see \eqref{eq:wasscoupling} and \eqref{eq:wassemp}). Being $w$ a sub-optimal coupling for $\mu_1^{k+1}, \mu_2^{k+1}$, it holds
\begin{align*}
\mathbb{E}_{\theta^k}[W_2^2(\mu_1^{k+1},\mu_2^{k+1}) ] &\leq \mathbb{E}_{\theta^k} \sum_{i,j} w_{ij} \left( \|x^{k+1}_{1,i} - x_{2,j}^{k+1} \|_2^2 + \|y^{k+1}_{1,i} - y_{2,j}^{k+1} \|_2^2  \right) \\
& \leq C \sum_{i,j} w_{ij} \left( \|x^k_{1,i} - x_{2,j}^k \|_2^2 + \|y^k_{1,i} - y_{2,j}^k \|_2^2  \right) + \|\bar y^{\alpha}(\bar \rho^k_{1}) -  \bar y^{\alpha}(\bar \rho^k_{2}) \|_2^2
\end{align*}
where we used the linearity of the expectation, estimates given by Lemma \ref{l:step} and,  to take the last term out of the sum, the fact that $\sum_{ij} w_{ij} =1$. 

To estimate the distance between the two consensus points, we use Lemma \ref{l:yalpha} and note that the coupling $w$ is sub-optimal for $\bar\rho ^k_1,\bar \rho^k_2$ with respect to the optimal transport. By Lemma \ref{l:yalpha}, it follows
\[
\|\bar y^{\alpha}(\bar \rho^k_{1}) - \bar  y^{\alpha}(\bar \rho^k_{2}) \|_2^2 \leq \;C W_2^2(\bar \rho^k_{1}, \bar \rho^k_{2}) \;\leq C\sum_{i,j} w_{ij}  \|y^k_{1,i} - y_{2,j}^k \|^2\,.
\]
Therefore, 
\begin{align*}
\mathbb{E}_{\theta^k}[W_2^2(\mu_1^{k+1},\mu_2^{k+1}) ] &\leq C_1 \sum_{i,j} w_{ij} \left( \|x^k_{1,i} - x_{2,j}^k \|_2^2 + \|y^k_{1,i} - y_{2,j}^k \|_2^2  \right) = C_1 \,W_2^2(\mu_1^k, \mu_2^k)\,, 
\end{align*}
thanks to the optimality of $w$, with $C_1 =  C_1(\Delta, \lambda, \sigma, \nu, \alpha, \beta,L_\F,M)$ being a positive constant.  One can  conclude by taking the expectation of the above inequality with respect to the sampling of $\theta^h, h<k$.
\end{proof}

We now quantify the impact of the particle discarding step.

\begin{proof}[Proof of Proposition \ref{p:reduction}] For notational simplicity, let us introduce $z_i = (x_i, y_i) \in \mathbb{R}^{2\d}$. As in \eqref{eq:wassemp}, the 2-Wasserstein distance is given by an optimal coupling between the full particle system $\{z_i\}_{i\in I}$ and the reduced one $\{z_j\}_{j \in I_{\red}}$. We consider the following transportation of mass from $\mu_N$ to $\mu_{N_{\red}}$: if particle $i$ has not been discarded, all its mass remains in $x_i$, otherwise the mass is uniformly distributed among the selected particles to generate an admissible coupling $w\in \mathbb{R}^{N \times N_{\red}}$. This means that $w$ is given by
\be \quad  w_{ij} = \begin{cases}
1/N & \textup{if}\; j = i, i \in I_{\red}\\
1/(N \cdot N_{\red})& \textup{if}\;  i \in I \setminus I_{\red}, j \in I_{\red}\\
0& \textup{else}\,.
\end{cases}
\label{eq:wchoice}
\ee
We note that such coupling $w$ satisfies the coupling conditions
\be \sum_{j \in I_{\red}} w_{ij} = \frac1N\, \quad \sum_{i\in I}w_{ij} = \frac1{N_\red}, \quad \forall\; i \in I, j \in I_{\red}\,
\label{eq:wcoupling}
\ee
and that this choice will be in general sub-optimal. Therefore, it holds
\begin{align*}
W_2^2( \mu_N, \mu_{N_\red}) &\leq \sum_{i\in I,\, j \in I_\red} w_{ij} \| z_i - z_j\|_2^2 \\
&= \frac1N \sum_{i \in I_\red}  \|z_i - z_i \|_2^2 + \frac1{N \cdot N_{\red}} \sum_{i \in I\setminus I_\red, \, j \in I_\red} \|z_i - z_j \|_2^2 \\
& = \frac1{N \cdot N_{\red}} \sum_{i,j \in I} \|z_i - z_j \|_2^2\, \mathbf{1}_{i \in I\setminus I_\red}\,\mathbf{1}_{j \in I_{\red}}
\end{align*}
where $\mathbf{1}_{i \in A} = 1$ if $i \in A$ and $\mathbf{1}_{i \in A} = 0$ if $i \notin A$. 

Now, the probability of having $i \in I \setminus I_{\red}$ is given by  $(N - N_{\red})/N$, while the probability of having $j \in I_{\red}$ (condition $i \in I \setminus I_{\red}$) is given by $N_{\red}/(N-1)$. Hence, we have
\[
\mathbb{E} \left[\mathbf{1}_{i \in I\setminus I_\red}\,\mathbf{1}_{j \in I_{\red}} \right] = \mathbb{P}\left[i \in I \setminus I_{\red}, j \in I_{\red} \right] = \frac{(N - N_\red)N_{\red}}{N (N- 1)}\,,
\]
from which follows
\begin{align*}
\mathbb{E}\left [W_2^2( \mu_N, \mu_{N_\red}) \right] &\leq
\frac1{N \cdot N_{\red}} \sum_{i,j \in I} \|z_i - z_j \|_2^2\,  \mathbb{E} \left[\mathbf{1}_{i \in I\setminus I_\red}\,\mathbf{1}_{j \in I_{\red}}
 \right] \\
 &  = \frac1{N \cdot N_{\red}} \cdot \frac{(N - N_\red)N_{\red}}{N (N- 1)} \sum_{i,j \in I} \|z_i - z_j \|^2_2\,.
\end{align*}
The desired estimates is then obtained by noting that the variance can be computed as $\var(\mathbf{z}) = 1/(2N^2)\sum_{i,j\in I} \|z_i - z_j \|_2^2$, see definition \eqref{eq:variance}.

\end{proof}

Finally, we are ready to provide a proof of Theorem \ref{t:reduction}.

\begin{proof}[Proof of Theorem \ref{t:reduction}]
Let $\{(x_i^k, y_i^k)\}_{i \in I_k}, |I_k| = N_k$ be the sequence of particles generated by iteration \eqref{eq:CBOME2} where additionally $N_{k+1}-N_k$ particles are discarded after each step $k \geq 0$. We denote with $\mu^k_{N_k}\in \mathcal{P}(\RR^{2\textup{d}})$ the empirical measure associated with such particle system given by
\be\notag
\mu^k_{N_k} = \frac1{N_k}\sum_{i\in I_k} \delta_{(x_i^k,y_i^k)}\,.
\ee
We also introduce the measures $\mu^k_{N_0}, k \geq 0$ corresponding to a particle system generated with the same initial conditions $\mu_{N_0}^0$ but where no particle reduction occurs. Consistently, we define $\mu_{N_k}^h$, $h > k$ to represent the particle system generated starting from $\mu_{N_k}^k$, after $h-k$ iterations, with no random selection. The relation between such measures is summarized in the following diagram
\be
\begin{matrix}
\mu_{N_0}^0 & \rightarrow & \mu_{N_0}^1       & \rightarrow  & \mu_{N_0}^2     & \rightarrow & \dots        &\rightarrow & \mu_{N_0}^k   & {\color{white} \vdots }            \\
	             &                   & \dashdownarrow   &                   &                          &                   &	          &                  &                  	&	\\
	             &                   & \mu_{N_1}^1      & \rightarrow & \mu_{N_1}^2      & \rightarrow  & \dots        &\rightarrow & \mu_{N_1}^k    &  {\color{white} \vdots }  \\
	              &                   &	                         &                   & \dashdownarrow &                  &		  &                   &                         & 	 \\
	              &                   &	                         &                   & \mu_{N_2}^2     & \rightarrow & \dots        &   \rightarrow                &   \mu_{N_2}^k       & {\color{white} \vdots } \\
                      &                   &	                         &                   &                          &                    &               &                       &                      & \\
                      &                   &	                         &                   &                          &                    &\ddots      &                       &               \vdots        & \\
                      &                   &	                         &                   &                          &                    &                 &                       &                       & \\
                       &                   &	                         &                   &                          &                   &                 &                   & \mu_{N_{k}}^k & {\color{white} \vdots } \\
\end{matrix}
\label{eq:scheme}
\ee
where $\rightarrow$ indicates an iteration step \eqref{eq:CBOME2} while  $\dashrightarrow$ a particle reduction procedure. Therefore, we are interested in studying the distance between the main diagonal of such diagram $\mu_{N_k}^k$, corresponding to the system with particle reduction, and the first row $\mu_{N_0}^k$ where particle reduction is never performed. 

We note that the 2-Wasserstein distance between subsequent rows can be estimated thanks to Proposition \ref{p:wass_stability} and Proposition \ref{p:reduction}. Let $\tilde{\mathbf{z}}^{h+1}$ denote the set of particles associated with the probability measure $\mu^{h+1}_{N_h}$, that is, the particle systems before the selection procedure (upper diagonal elements in scheme \eqref{eq:scheme}). By first applying Proposition \ref{p:wass_stability} and, subsequently, Proposition \ref{p:reduction} to $\tilde{\mathbf{z}}^{h+1}$, we obtain that for some constant $C>0$
\begin{align*}
\mathbb{E} \left[W_2^2(\mu_{N_k}^k, \mu_{N_0}^k)\right] &\leq C \,\sum_{h = 0}^{k-1} \mathbb{E} \left[ W_2^2 \left ( \mu_{N_h}^k, \mu_{N_{h+1}}^k\right)\right ] \\
& \leq C\, \sum_{h=0}^{k-1} C_1^{k-h+1}\, \mathbb{E} \left[W_2^2\left(\mu_{N_h}^{h+1}, \mu_{N_{h+1}}^{h+1}\right)\right] \\
& \leq 2\,C\, \sum_{h=0}^{k-1} C_1^{k-h+1}\, \var \left(\tilde{\mathbf{z}}^{h+1}\right) 
\frac{N_h - N_{h +1}}{N_h-1}\\
& \leq C_2\,\max_{h = 1, \dots, k} \var \left(\tilde{\mathbf{z}}^{h}\right)\frac{1}{N_k-1} \,\sum_{h=0}^{k-1} N_h - N_{h+1} \\
& = C_2\, \max_{h = 1, \dots, k} \var \left(\tilde{\mathbf{z}}^{h}\right)  \frac{N_0 - N_k}{N_k-1}
\end{align*}
with $C_2 = C_2(\Delta t, \lambda, \sigma, \nu, \beta, \alpha, k, M)$. Finally, the desired estimate follows after noting that 
\[ W_2^2(\rho^{k}_{N_k}, \rho^k_{N_0}) \leq W_2^2(\mu_{N_k}^k, \mu_{N_0}^k)
\]
since $\|x_i^k - x^k_j \|_2^2 \leq \|(x_i^k, y^k_i) - (x_j^k , y_j^k)\|_2^2$ for all couples of particles $(i,j)$.
\end{proof}

\section*{Acknowledgments}
This work has been written within the activities of GNCS group of INdAM (National Institute of High Mathematics). L.P. acknowledges the partial support of MIUR-PRIN Project 2017, No. 2017KKJP4X “Innovative numerical methods for evolutionary partial differential equations and applications”. The work of G.B. is funded by the Deutsche Forschungsgemeinschaft (DFG, German Research Foundation) through 320021702/GRK2326 ``Energy, Entropy, and Dissipative Dynamics (EDDy)'' and SFB 1481 ``Sparsity and Singular Structures''. S.G. acknowledges the support of the ESF PhD Grant “Mathematical and statistical methods for machine learning in biomedical and socio-sanitary applications”.

\bibliographystyle{abbrv}
\bibliography{bibfile}

\end{document}